\title{Ford Circles and Spheres}
\author{Sam Northshield\thanks{SUNY-Plattsburgh}}
\numberwithin{equation}{section}
\newtheorem{thm}{Theorem}[section]
\newtheorem{theorem}[thm]{Theorem}
\newtheorem{lemma}[thm]{Lemma}
\newtheorem{proposition}[thm]{Proposition}
\newtheorem{corollary}[thm]{Corollary}
\newtheorem{problem}[thm]{Problem}
\begin{document}
\date{}
\maketitle
 \begin{abstract}
Ford circles are parameterized by the rational numbers but are also the result of an iterative geometric procedure.   We review this and introduce an apparently new parameterization by solutions of a certain quadratic Diophantine equation.   We then generalize to Eisenstein and Gaussian rationals where the resulting ``Ford spheres" are also the result of iterative geometric procedures and are also parameterized by solutions of certain quadratic Diophantine equations.  We generalize still further to imaginary quadratic fields. \end{abstract}

\section{Introduction}
The set of Ford circles form an arrangement of circles each above but tangent to the $x$-axis at a rational number, with disjoint interiors,  that is maximal in the sense that no additional such circles can be added.   

\begin{figure}[htbp] 
 \centering\includegraphics[ width=2 in]{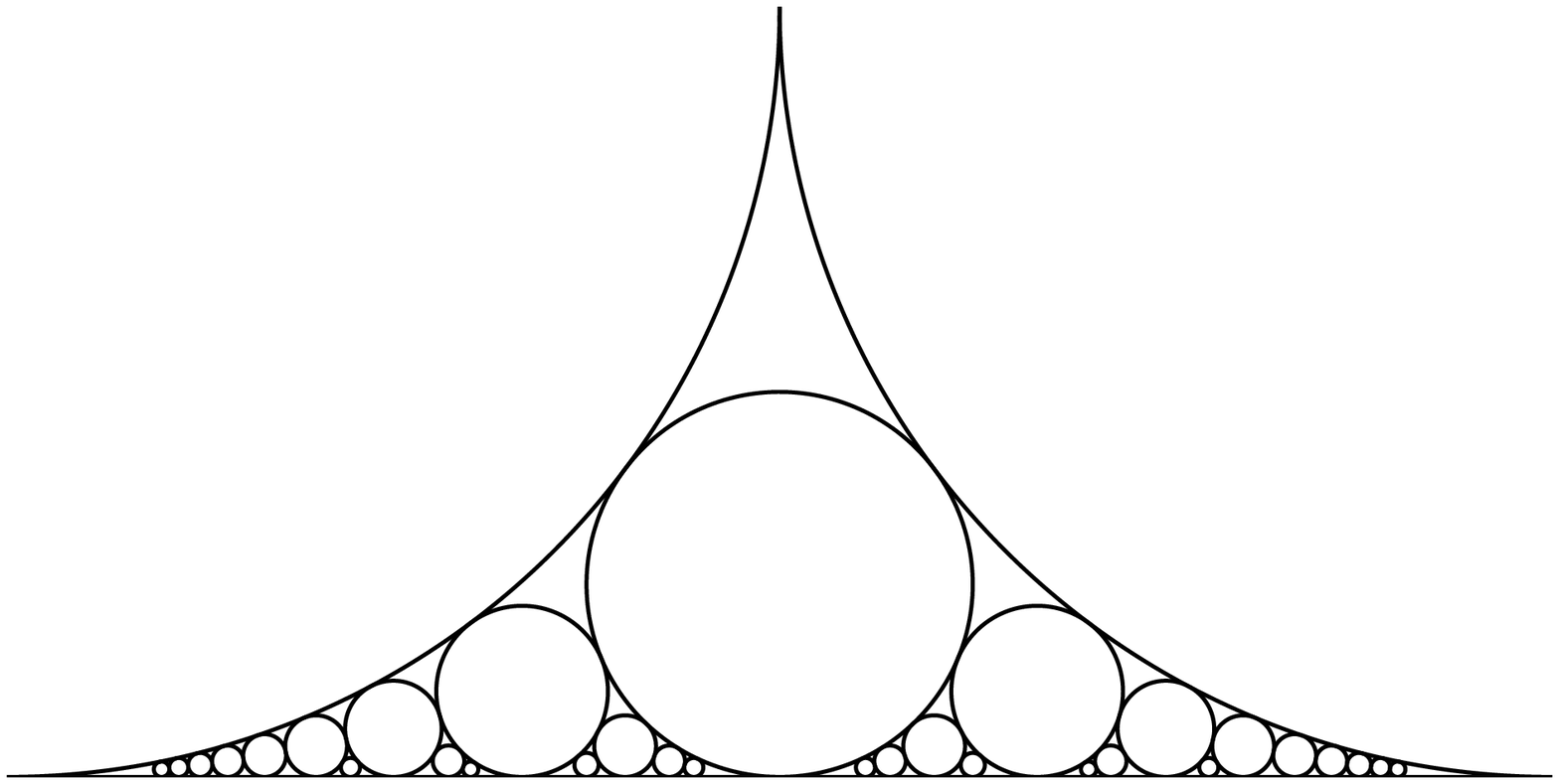} 

   \centering{
   \includegraphics[ width=.8 in]{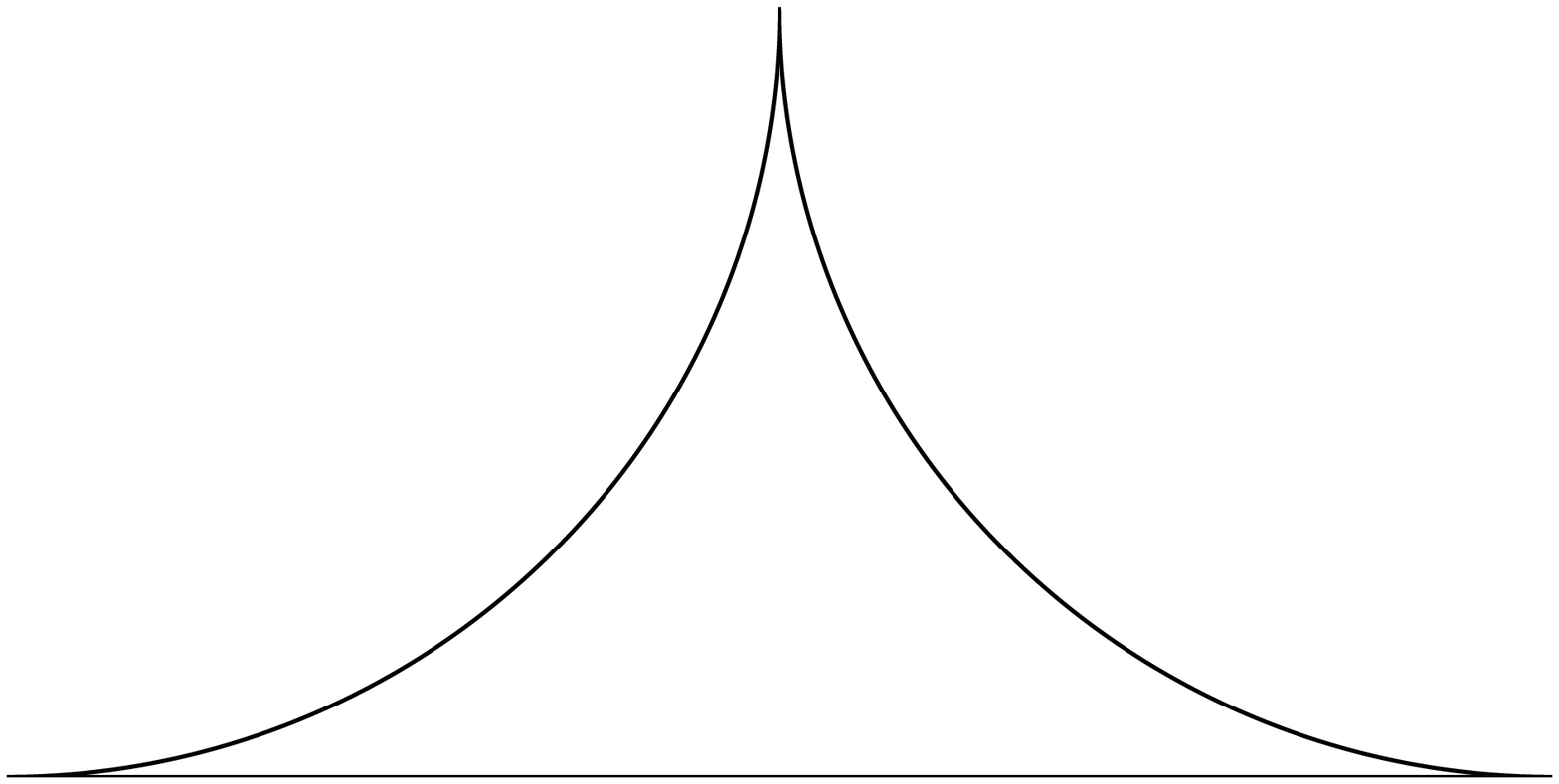} 
   \includegraphics[ width=.8 in]{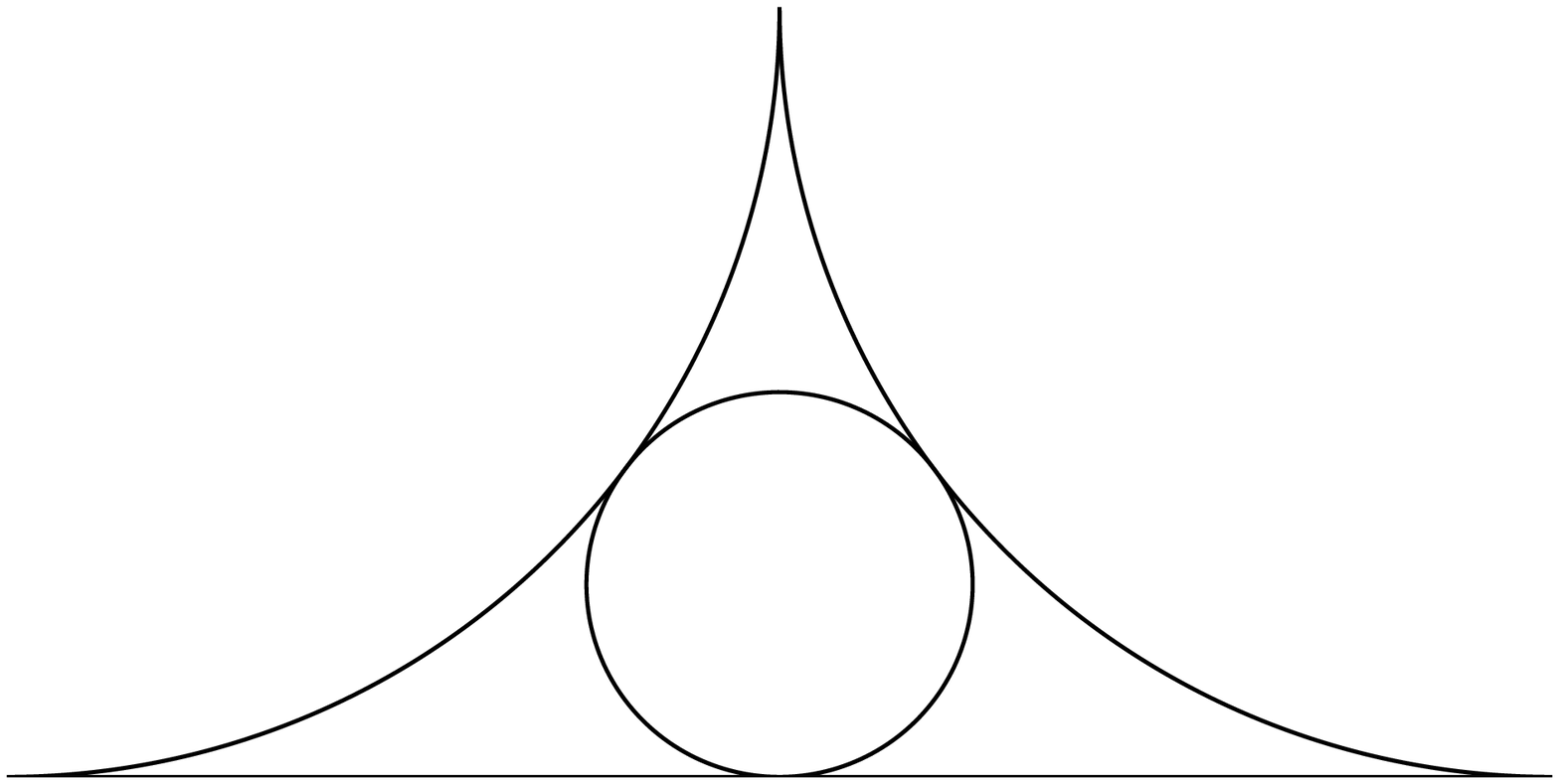} 
   \includegraphics[ width=.8 in]{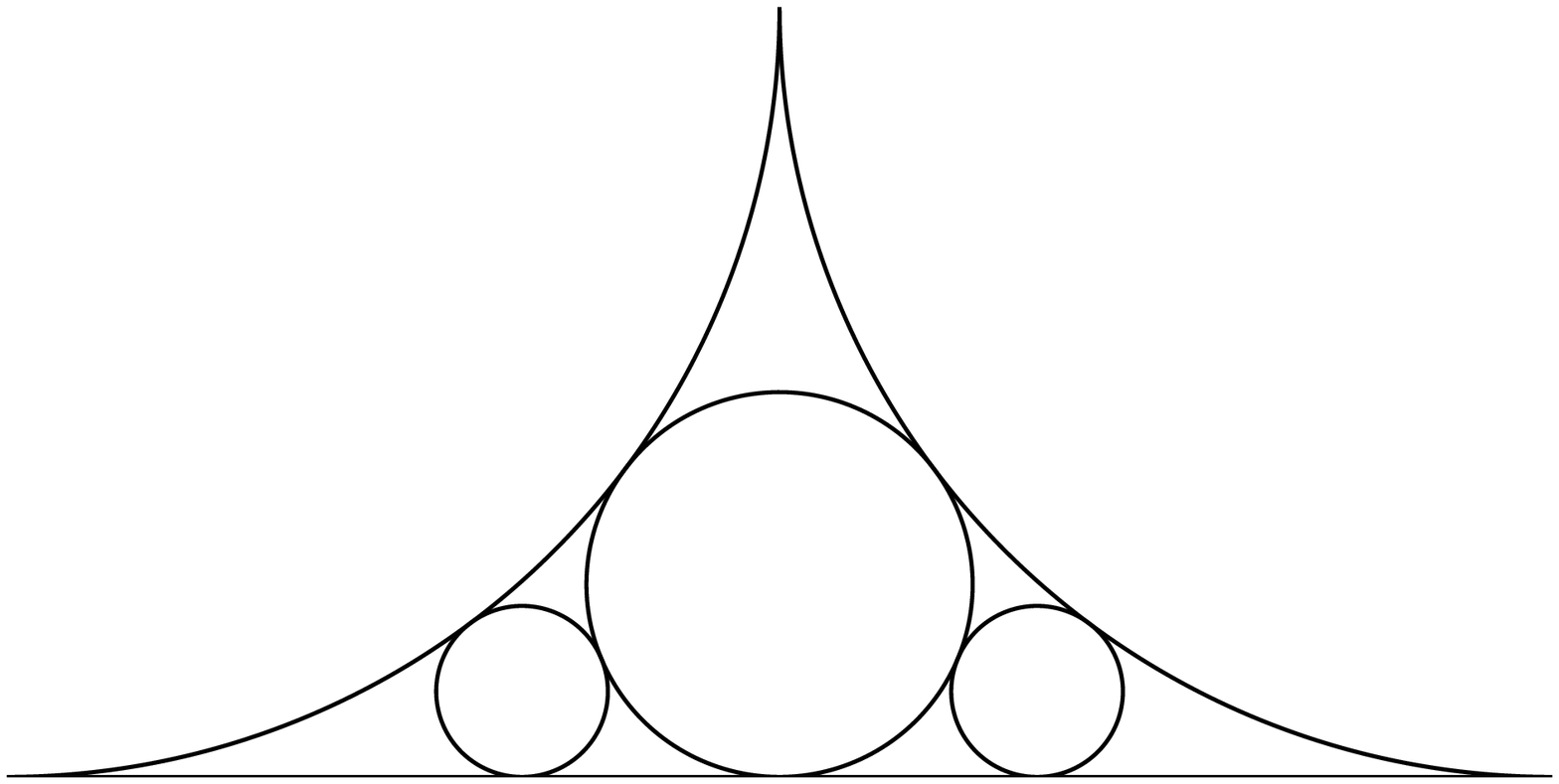}
    \includegraphics[ width=.8 in]{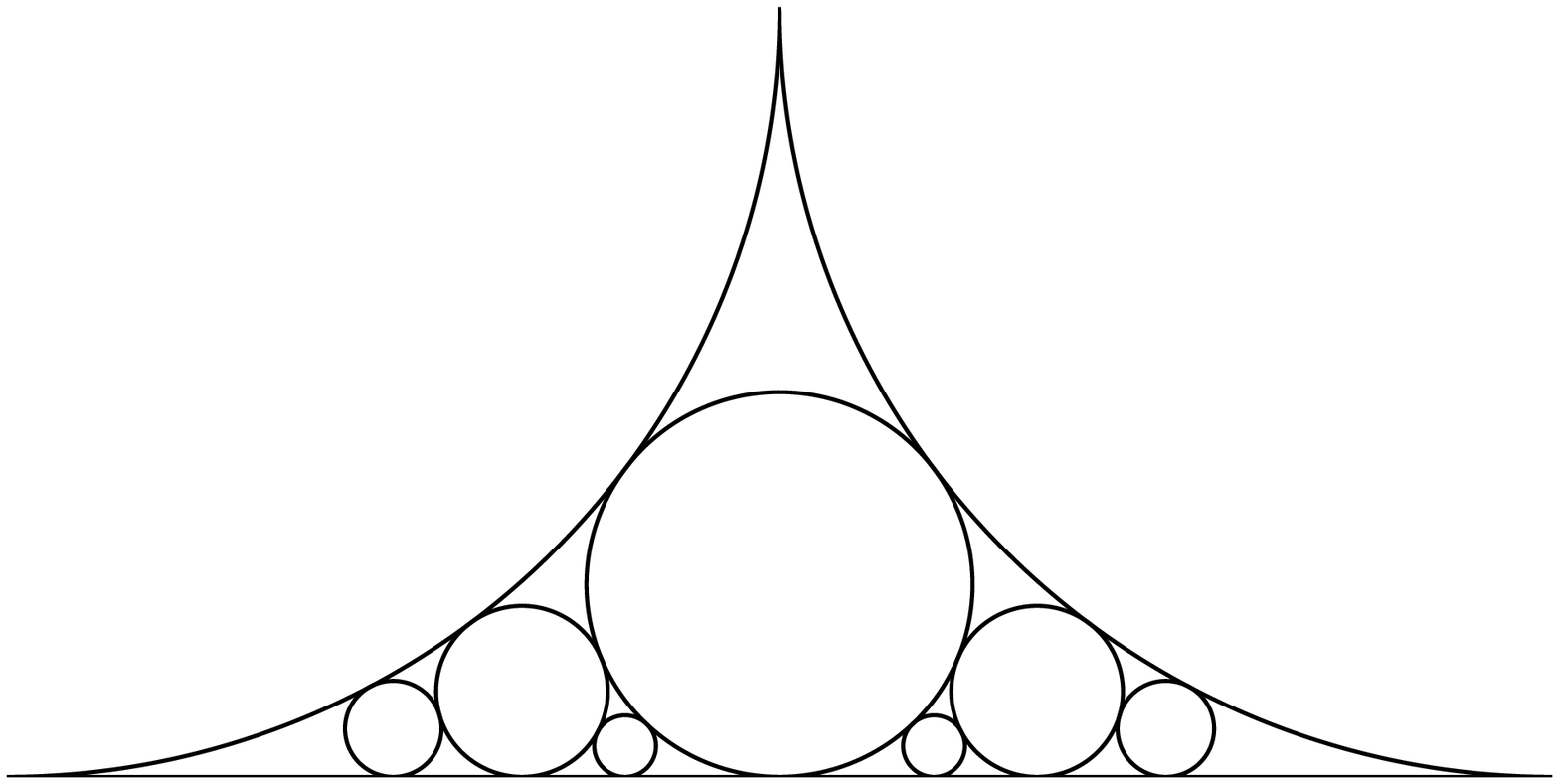} 
    }
       \caption{Ford circles and their geometric construction}
   \label{fig. 7}
\end{figure}

They provide a natural way of visualizing the Diophantine approximation of real numbers by rationals,  are used in the ``circle method" of Ramanujan and Hardy, and give a geometric way of looking at continued fractions.  They form part of an Apollonian circle packing,  a topic of intense current study, as well as provide some connection to some older open problems (e.g., see \cite{NorSt} for connection to the Riemann hypothesis,  \cite{GLMWYnt} for Hausdorff dimension).  Three dimensional analogues are then certainly of interest.  

In Section 2, we construct the set of Ford circles in three different ways.   First, to each rational number $a/b$ (in lowest terms), assign a circle above but tangent to the $x$-axis at $a/b$ with radius$1/2b^2$.   The set of circles thus parameterized by ${\Bbb Q}$ is the set of Ford circles which we denote by ${\cal P}$.   A geometric construction of this set starts with circles of radius 1/2 above but  tangent to the $x$-axis at each of the integers.  We then proceed inductively:  given any two circles tangent to each other, we add the unique circle between and tangent to those two (see Figure 1).  The maximal collection formed by this procedure is denoted ${\cal G}$.       Third, for every relatively prime integer solution $(a,b,c)$  of  $a^2+b^2+c^2=(a+b+c)^2$  we define a new a circle above and tangent to the $x$-axis at $b/(a+b)$ with radius $1/2(a+b)$.   In this last construction, the tangent point has ``projective barycentric coordinates" $(a,b)$ and so we call this last construction barycentric and denote the collection by  ${\cal B}$.  These three collections of circles are shown to be all the same.   

In Section 4, we define a family of Ford spheres and three parameterizations of it that are analogous to the constructions of Ford circles.   For $\omega$ the cube root of unity with positive imaginary part, we may parameterize a family of spheres by Eisenstein rationals ${\Bbb Q}(\omega)$, as we did for Ford circles:  given two relatively prime Eisenstein integers $\alpha,\beta$, we construct a sphere in ${\Bbb C}\times{\Bbb R^+}$  tangent to ${\Bbb C}$ at $\alpha/\beta$ with radius $1/2|\beta|^2$.  
A geometric construction of this family of spheres is then given.   Starting with spheres of radius 1/2 at every point in the triangular lattice $\{m+n\omega\:  m,n\in{\Bbb Z}\}$, iterate the process:  given any three mutually tangent spheres, add the 
two uniquely determined spheres each of which is tangent to the original three and to the complex plane.  
This process is ``tetrahedral" in the sense that for every three mutually tangent spheres (i.e., their contact graph is a triangle), there is a fourth so that the contact graph of all four is a tetrahedron.     Finally, the relatively prime integer solutions  of $(a+b+c+d)^2=a^2+b^2+c^2+d^2$ parameterize these spheres barycentrically. 

In Section 5, we define another family of Ford spheres and three parameterizations of it that are based on the Gaussian integers.   That is, we construct family of spheres as in Section 3 using relatively prime Gaussian integers instead of Eisenstein integers.   There is a corresponding geometric construction, this time  based on octahedra instead of tetrahedra.   Finally, the barycentric parameterization is in now terms of solutions of $(a+b+c+d)^2=2(a^2+b^2+c^2+d^2).$  

In Section 6, we consider a definition of Ford spheres parameterized by ${\Bbb Q}(\sigma)$ where ${\Bbb Z}[\sigma]={\cal O}_{{\Bbb Q}(\sqrt{-D})}$ for some other positive integers $D$ besides 1 and 3 (so that ${\Bbb Z}[\sigma]$ is a UFD).  In this situation, we develop a barycentric construction as well.   In the three cases where ${\Bbb Z}[\sigma]$ is a Euclidean domain that are not covered by Sections 3 and 4, we conjecture a geometric construction for the corresponding Ford spheres.   

In Section 7, we present a summary of the paper and some directions of further research.

The Ford spheres of associated with $\omega$ (Section 4) have been considered by Rieger \cite{R1, R2},   and those associated with $i$  (Section 5)  by Pickover \cite{Pick}.   There is doubtless some overlap with these papers as well as with the material covered in a series of papers by Asmus Schmidt \cite{Sch1,Sch2,Sch3, Sch4}.   These last four papers are concerned with Diophantine approximation and geometry relate to the cases where  $D=1,3,2,11$ respectively.   Sullivan \cite{Su} deals with quite general arrays of spheres ${\Bbb C}\times{\Bbb R^+}$  tangent to ${\Bbb C}$.   

Some of the novelties that appear in this paper are as follows.   The ``barycentric" parameterizations of Sections 2,4,5 and 6 in terms of solutions of Diophantine equations (e.g.,Equations 7, 8, 11) is new.   We show that for such solutions $(a,b,c,d)$ of (7),  $|a+b+c|=|\gamma|^2$ for some Eisenstein integer $\gamma$ [Cor. 4.6].   Similarly, for solutions $(a,b,c,d)$ of (8),  $|a+b|$ is a sum of two integer squares and $|a+b+c|$ is the sum of norm squares of two Eisenstein integers [Cor. 5.10].   We introduce a quadratic form on pairs of solutions of equation 7 which takes on the value 1 if and only if the corresponding spheres are tangent [Th. 3.4].   Also in Section 3, we are able to redefine the Poincar\'e extension of a M\"obius transformation in terms of its action on spheres [Prop. 3.2].   In Section 6, the Ford spheres parameterized by ${\Bbb Q}(\sigma)$  are introduced.   It is shown also these spheres can be parameterized barycentrically (in terms of solutions of equation (11)).  An intriguing connection is made between these solutions and a group related to ``secant addition" of \cite{NorSecant}.

\section{Ford circles}

We say a circle in the $x,y$-plane is \emph{normal} if it is above and tangent to the $x$-axis.  
For $t\in{\Bbb R}$ and $r>0$, let $C(t,r)$ be the circle with center $(t,r)$ and radius $r$.    Hence, $C(t,r)$ is normal.  We note that every normal circle can be uniquely represented as $C(t,r)$ for some $t,r$.    By the Pythagorean theorem, two circles $C(t,r)$ and $C(t',r')$ are tangent (we write $C(t,r)||C(t',r')$) if and only if 
$$(t-t')^2+(r-r')^2=(r+r')^2$$ or, equivalently,
$$(t-t')^2=4rr'.\eqno{(1)}$$

Given $a,b\in{\Bbb R}$ with $b>0$, we define $$C_{a,b}:=C\left(\frac ab, \frac 1{2b^2}\right).$$
Then every  circle above and tangent to the $x$-axis can be uniquely represented as $C_{a,b}$ for some real $a,b$ ($b>0$):
$$C(t,r)=C_{t/\sqrt{2r}, 1/\sqrt{2r}}.$$
   By (1), two such circles are tangent, i.e., 
$C_{a,b}||C_{c,d}$,  if and only if $|ad-bc|=1$.  

From this point on, we write $a\perp b$ for $a,b$ relatively prime.  We define the set of Ford circles:
$${\cal P}:=\{C_{a,b}:  a,b\in{\Bbb Z}, a\perp b\}.$$

We define a set of circles to be \emph{normal} if each circle is normal and no two circles have intersecting interiors (so, of course, ${\cal P}$ is normal).   We order the set of normal sets of circles:  
$${\cal A}<{\cal B} \Longleftrightarrow \bigcup_{A\in {\cal A}}A\subset\bigcup_{B\in {\cal B}}B.$$

\begin{proposition}   ${\cal P}$ is maximal with respect to the order $<$.  \end{proposition}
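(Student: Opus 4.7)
The plan is to prove the concrete assertion that no normal circle $C_{\alpha,\beta}\notin\mathcal{P}$ can be adjoined to $\mathcal{P}$ without destroying the disjoint-interior property, which is the substance of the maximality claim. From the derivation of equation (1), two normal circles $C_{\alpha,\beta}$ and $C_{a,b}$ have intersecting interiors exactly when $|\alpha b - a\beta|<1$. Thus, setting $t:=\alpha/\beta$, I must produce for each such $C_{\alpha,\beta}$ a coprime integer pair $(a,b)$ with $b\ge 1$ satisfying $|tb-a|<1/\beta$.

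I would split on whether $t$ is rational. If $t=p/q$ in lowest terms with $q\ge 1$, take $(a,b):=(p,q)$; then $|\alpha q - p\beta|=0<1$. Since $C_{\alpha,\beta}\neq C_{p,q}$ (otherwise $C_{\alpha,\beta}\in\mathcal{P}$), the two distinct normal circles $C_{\alpha,\beta}$ and $C_{p,q}$ share the tangent point $(p/q,0)$, so one sits strictly inside the other and they certainly overlap. If instead $t$ is irrational, Dirichlet's approximation theorem supplies infinitely many coprime pairs $(a,b)$ with $b\ge 1$ and $|tb-a|<1/b$; choosing any such pair with $b>\beta$ gives $|tb-a|<1/b<1/\beta$, as required.

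The one step requiring a little care is the invocation of Dirichlet in the irrational case: one must use the version producing coprime approximants with denominators tending to infinity, so that the bound $1/b$ can be driven below the fixed $1/\beta$. Everything else is either direct from equation (1) or the elementary observation that two distinct normal circles tangent to the $x$-axis at a common rational point are nested.
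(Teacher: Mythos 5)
Your proof is correct and follows essentially the same route as the paper's: both reduce the maximality claim to showing that any normal circle outside $\mathcal{P}$ must overlap the interior of some Ford circle, via the criterion $|\alpha b-a\beta|<1$ (equation (1)) and a rational-approximation step for irrational tangent points, with your Dirichlet argument playing the role of the paper's appeal to the density of $nx \bmod 1$. The only differences are cosmetic: you make explicit the rational tangent-point case that the paper compresses into ``obviously $x$ must be irrational,'' and you fold the paper's separate remark about enlarging a Ford circle into your general statement, since an enlarged circle is just another normal circle outside $\mathcal{P}$ handled by that same rational case.
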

\begin{proof}  We first show that no circle in ${\cal P}$ can be enlarged in the sense of having its radius increased but its tangent point to the $x$-axis the same. 
Note that any two Ford circles have disjoint interiors (since $|ad-bc|\ge 1$ for two Ford circles $C_{a,b}$ and $C_{c,d}$).  Given a Ford circle $C_{a,b}$,  $a\perp b$ and so there exist $c,d$ ($d>0$) such that $|ad-bc|=1$ and thus there exists another Ford circle, $C_{c,d}$, tangent to $C_{a,b}$.   This implies that no circle in ${\cal P}$ can be enlarged and still maintain normality (i.e., its interior is still disjoint from the interiors of all the other circles in ${\cal P}$).

Next, we show that no normal circle  can be added.    Suppose there exist $x,r$ so that  for all Ford circles $C_{a,b}$, $C(x,r)^{\circ}\cap C_{a,b}= \emptyset$.  Obviously $x$ must be irrational and thus $nx \mod 1$ is dense in $[0,1]$.   Consequently, the set $\{bx-a: a,b\in{\Bbb Z}, b>0\}$ is dense in ${\Bbb R}$ and so, for any fixed $r$, there exist $a,b$ such that 
$$\left|x-\frac ab\right|<\frac{\sqrt{2r}}b.$$
By equation (1), this implies $C(x,r)^{\circ}\cap C_{a,b}^{\circ}\neq\emptyset$, a contradiction.  \end{proof}

We shall use the following geometric fact.  

\begin{proposition} Given two tangent normal circles (tangent to the $x$-axis at, say, $x$ and $y$), there is a unique third normal circle tangent to both and to the $x$-axis at a point between $x$ and $y$. \end{proposition}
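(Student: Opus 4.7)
The plan is to reduce everything to Equation (1) and solve a small system. Assume without loss of generality that $x < y$, and write the two given tangent normal circles as $C(x,r)$ and $C(y,s)$. By (1), their tangency says $(y-x)^2 = 4rs$, and since $y > x$ we have $y-x = 2\sqrt{rs}$.

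Suppose $C(z,t)$ is a normal circle tangent to both with $x < z < y$. Using (1) again, tangency gives
$$(z-x)^2 = 4rt \quad\text{and}\quad (y-z)^2 = 4st,$$
and the sign of the square roots is fixed by the betweenness condition: $z-x = 2\sqrt{rt}$ and $y-z = 2\sqrt{st}$. Adding these two identities yields
$$y-x = 2\sqrt{t}\,(\sqrt{r}+\sqrt{s}).$$
Combined with $y-x = 2\sqrt{rs}$, this forces
$$\sqrt{t} = \frac{\sqrt{rs}}{\sqrt{r}+\sqrt{s}}, \qquad \text{equivalently}\qquad \frac{1}{\sqrt{t}} = \frac{1}{\sqrt{r}} + \frac{1}{\sqrt{s}},$$
so $t$ is uniquely determined. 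Substituting back gives $z = x + 2\sqrt{rt} = x + \tfrac{2r\sqrt{s}}{\sqrt{r}+\sqrt{s}}$, so $z$ is uniquely determined as well. This proves uniqueness.

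For existence, define $t$ and $z$ by the formulas above; clearly $t>0$, so $C(z,t)$ is a bona fide normal circle. One then verifies directly from the definition of $t$ that $z - x = 2\sqrt{rt}$ and $y - z = 2\sqrt{st}$ (the second follows by symmetry, or by subtracting the first from $y - x = 2\sqrt{rs}$ and using the formula for $\sqrt{t}$). Squaring gives (1) in both cases, so $C(z,t)$ is tangent to $C(x,r)$ and to $C(y,s)$, and by construction $x < z < y$. No step is really an obstacle; the only mild subtlety is tracking signs carefully so that the betweenness condition $x<z<y$ picks out the correct root and thereby yields a genuinely unique circle rather than the two solutions of the raw quadratic system.
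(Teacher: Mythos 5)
Your proof is correct, but it follows a genuinely different route from the paper's. The paper argues softly: for $t\in(x,y)$ it sets $R_1(t)=(t-x)^2/4r$ and $R_2(t)=(t-y)^2/4s$, the unique radii making a circle tangent to the $x$-axis at $t$ tangent to $C(x,r)$, respectively $C(y,s)$, then notes that $R_1$ increases from $0$ while $R_2$ decreases to $0$, so by monotonicity there is exactly one $z\in(x,y)$ with $R_1(z)=R_2(z)$; notably, that argument never uses the hypothesis that the two given circles are tangent to each other, so it applies verbatim to any two normal circles with distinct tangent points. You instead solve the system in closed form: the betweenness condition pins down the signs $z-x=2\sqrt{rt}$ and $y-z=2\sqrt{st}$, and combining with $y-x=2\sqrt{rs}$ gives $1/\sqrt t=1/\sqrt r+1/\sqrt s$ and $z=x+2r\sqrt s/(\sqrt r+\sqrt s)$, after which existence is a direct verification (your check that $y-z=2\sqrt{st}$ indeed follows from the formula for $\sqrt t$). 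The trade-off: your computation leans on the tangency of the two given circles, but in exchange it yields the explicit curvature relation $1/\sqrt t=1/\sqrt r+1/\sqrt s$, which makes transparent the paper's subsequent remark that the child of $C_{a,b}$ and $C_{c,d}$ is $C_{a+c,b+d}$, since $r=1/2b^2$ and $s=1/2d^2$ give $t=1/2(b+d)^2$; the paper's monotonicity argument is shorter and more general but produces no formulas.
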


\begin{proof}  Suppose $C(x,r)||C(y,s)$ where $x<y$.   For $t\in(x,y)$, let $R_1(t):=(t-x)^2/4r$ and $R_2(t)=(t-y)^2/4s$.   Then $C(t,R_1(t))||C(x,r)$ and $C(t,R_2(t))||C(y,s)$ (and each is unique).   As $t$ increases,  $R_1(t)$ increases from 0 and $R_2(t)$ decreases to 0 and so there exists a unique $z\in(x,y)$ such that $R_1(z)=R_2(z)$.   Hence $C(z,R_1(z))$ is tangent to both $C(x,r)$ and $C(y,s)$.  \end{proof}

We call this new circle the \textit{child} of the other two circles.  It is easy to see that if $C_{a,b}||C_{c,d}$ then  $C_{a+c,b+d}$  and $C_{a-c,b-d}$ are tangent to both $C_{a,b}$ and $C_{c,d}$.
We say that $C_{a,b}$ and $C_{c,d}$ are \emph{parents}  of $C_{a+c,b+d}$ and $C_{a-c,b-d}$

It is possible to find the parents of a given Ford circle. For an ordered pair $(a,b)$ of positive integers , consider the \emph{slow Euclidean algorithm}:
$$[a,b]\longmapsto \begin{cases} [a,b-a] &\text{if $a<b$,}\\
[a-b,b] &\text{if $a>b$,}\\ \text{stop}&\text{if
$a=b$}.\end{cases}$$ 
This algorithm must terminate (since the sum of the two entries is positive and strictly decreasing) and, since the greatest common divisor is preserved at each step, this algorithm terminates with 
$[\gcd(a,b),\gcd(a,b)]$.  For example:
$$[14,5]{\buildrel L\over\longmapsto}[9,5]{\buildrel L\over\longmapsto}[4,5]{\buildrel R\over\longmapsto}[4,1]
{\buildrel L\over\longmapsto}[3,1]{\buildrel L\over\longmapsto}[2,1]{\buildrel L\over\longmapsto}[1,1];\eqno{(2)}$$
here we labeled each arrow according to which of the two entries is changed.   

This leads to a definition of maps $L(a,b):=(a-b,b)$ and $R(a,b):=(a,b-a)$ so that, for example, $L\circ L\circ L\circ R \circ L \circ L(14,5)=(1,1)$.   The maps $L$ and $R$ are invertible and thus $L^{-1}\circ L^{-1}\circ R^{-1}\circ L^{-1}\circ L^{-1}\circ L^{-1}(1,1)=(14,5)$.  

In general, every relatively prime pair of positive integers $(a,b)$ gives rise to a word $w_1w_2...w_n$ in $\{L,R\}^*$ so that
$w_1^{-1}\circ w_2^{-1}\circ ... \circ w_n^{-1}(1,1)=(a,b).$
It follows easily by induction that if
$(x,y):=w_1^{-1}\circ w_2^{-1}\circ ... \circ w_n^{-1}(0,1)$ and $(u,v):=w_1^{-1}\circ w_2^{-1}\circ ... \circ w_n^{-1}(1,0)$, then 
$C_{x,y}$ and $C_{u,v}$ are the parents of $C_{a,b}$.    We call this the \emph{parent algorithm}.

We now give a geometric construction of a normal set of circles that turns out to coincide with ${\cal P}$.
If ${\cal A}$ and ${\cal B}$ are normal sets of circles, we write  ${\cal A}{\lessdot} {\cal B}$ if ${\cal B}={\cal A}\cup\{C\}$ where $C$ is the child of two circles in ${\cal A}$ (as constructed in Proposition 2.2).  
Starting with the normal set of circles ${\cal G}_0:=\{C_{n,1}:  n\in{\Bbb Z}\}$, let ${\cal G}$ be the union of all normal sets of circles ${\cal G}'$ that are maximal elements of a finite chain ${\cal G}_0\lessdot\dots\lessdot{\cal G}'$.   

\begin{lemma}  ${\cal G}={\cal P}$.  \end{lemma}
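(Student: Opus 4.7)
The plan is to prove the two inclusions ${\cal G}\subseteq{\cal P}$ and ${\cal P}\subseteq{\cal G}$ separately, with the first feeding into the second.

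For ${\cal G}\subseteq{\cal P}$, I would induct on the length of a chain ${\cal G}_0\lessdot{\cal G}_1\lessdot\cdots\lessdot{\cal G}_n$. The base case is immediate, since $n\perp 1$ for every integer $n$ makes each $C_{n,1}$ a Ford circle. At the inductive step the new circle is the child of two tangent members $C_{a,b},C_{c,d}$, which the mediant observation recorded in the text identifies as $C_{a+c,b+d}$. Tangency gives $|ad-bc|=1$, so any prime dividing both $a+c$ and $b+d$ would divide $d(a+c)-c(b+d)=\pm 1$; thus $(a+c)\perp(b+d)$ and the child lies in ${\cal P}$.

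For ${\cal P}\subseteq{\cal G}$, I would run strong induction on the positive denominator $b$. The case $b=1$ is immediate since $C_{a,1}\in{\cal G}_0$. For $b\geq 2$, Bezout yields $p,q$ with $aq-bp=1$, and the shift $(p,q)\mapsto(p+ka,q+kb)$ (which preserves the relation) lets one arrange $1\leq q\leq b-1$, hence also $1\leq b-q\leq b-1$. The pairs $(p,q)$ and $(a-p,b-q)$ are each coprime (from the same relation), so $C_{p,q}$ and $C_{a-p,b-q}$ are Ford circles with denominators strictly less than $b$. They are tangent, since $|p(b-q)-q(a-p)|=|pb-qa|=1$, and their mediant is $a/b$; by Proposition 2.2 their unique child is precisely $C_{a,b}$. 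The induction hypothesis places both parents in ${\cal G}$.

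The non-routine point is upgrading ``both parents in ${\cal G}$'' to ``child in ${\cal G}$.'' Each parent sits in some finite chain ${\cal G}_0\lessdot\cdots\lessdot{\cal G}^{(i)}$, and I would justify concatenating such chains: every child-step in the second chain remains legal after the first chain has run, since enlarging the ambient set can only add tangent pairs, not destroy them, so the formation of each successive child is still valid. Normality is preserved along the combined chain because the forward inclusion, already proved, shows every added circle is a Ford circle, and any two Ford circles have disjoint interiors (exactly as in the proof of Proposition 2.1). One thereby obtains a single chain containing both parents, and a final $\lessdot$-step attaches $C_{a,b}$ as their child. This concatenation argument is the main obstacle; everything else is direct induction.
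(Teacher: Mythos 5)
Your proof is correct, but it takes a route that differs from the paper's in both directions of the equality. For ${\cal P}\subseteq{\cal G}$ you manufacture the parents of $C_{a,b}$ directly from a Bezout relation $aq-bp=1$ (normalized so that $1\le q\le b-1$) and run strong induction on the denominator, whereas the paper obtains parents from the slow Euclidean algorithm (the ``parent algorithm'') and inducts along its termination; these are close in spirit but not the same device. The sharper divergence is in the other direction: the paper disposes of ${\cal G}\subseteq{\cal P}$ in one stroke by invoking the maximality of ${\cal P}$ (Proposition 2.1), while you prove it directly by induction on chain length, using $|ad-bc|=1$ to show the mediant child $C_{a+c,b+d}$ of two tangent Ford circles is again a Ford circle. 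Your version buys self-containedness and rigor at the seams the paper compresses: the appeal to maximality tacitly requires knowing that ${\cal G}$ (a union over many chains) is itself a normal family, and the cleanest way to see that is exactly your forward inclusion; likewise your explicit concatenation of the two parents' chains, with normality preserved because every circle appearing is a Ford circle and Ford circles have pairwise disjoint interiors, fills in what the paper waves at with ``by an induction argument.'' The paper's version, in exchange, is much shorter and reuses machinery already on hand. Two cosmetic remarks: in the concatenated chain a step may re-add a circle already present, which you should either allow as a trivial step or simply delete; and the statement ``every circle added along the combined chain is a Ford circle'' is best phrased as a simultaneous induction (Ford-ness and normality propagating together at each $\lessdot$ step) rather than as a citation of the already-proved inclusion, since that inclusion was stated for chains all of whose members are normal — the key lemma (child of tangent Ford circles is Ford) is what does the work and needs no normality hypothesis.
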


\begin{proof}  Note that $C_{-a,b}$ has parents $C_{-x,y}$ and $C_{-u,v}$ if $C_{a,b}$ has parents $C_{x,y}$ and $C_{u,v}$.
Hence the existence of 
the  parent algorithm shows that  any Ford circle $C_{a,b}$ is either in ${\cal G}_0$ (if $b=1$) or $C_{a,b} $ has two ``parents" (if $b\neq1$)  Hence, by an induction argument, every $C_{a,b}$ is in ${\cal G}$ and thus, by the maximality of ${\cal P}$, the theorem follows.    \end{proof}


 We introduce a new way to parameterize normal circles.   Given $a,b\in{\Bbb R}$ (with $a+b>0$), let 
$$\langle a,b\rangle:=C\left(\frac{b}{a+b}, \frac1{2(a+b)}\right).$$
It is easy to verify that every normal circle is represented in the form $\langle a,b\rangle$:
$$C(x,r)=\left\langle\frac{1-x}{2r},\frac x{2r}\right\rangle$$
   and therefore every normal circle is represented in this new way.   We refer to such representations as ``barycentric" and will explain this terminology in Section 3. 
   
   The set of Ford circles can be represented barycentrically in terms of solutions of a certain Diophantine equation.
Let
$${\cal B}:=\{\langle s,t \rangle:  s,t,u\in {\Bbb Z},  \gcd(s,t,u)=1, s+t>0, (s+t+u)^2=s^2+t^2+u^2\}.$$

\begin{theorem}  ${\cal B}={\cal P}={\cal G}$. \end{theorem}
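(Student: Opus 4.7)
The plan is to invoke Lemma 2.3, which already yields ${\cal P} = {\cal G}$, and focus on the remaining identity ${\cal B} = {\cal P}$. The first step is to rewrite the Diophantine equation: expanding $(s+t+u)^2 = s^2+t^2+u^2$ gives $st+su+tu = 0$, equivalently $u(s+t) = -st$. This identity drives both inclusions.

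For ${\cal P} \subseteq {\cal B}$, given $C_{a,b}$ with $a \perp b$, $b>0$, I propose the explicit triple
$$(s,t,u) := (b(b-a),\ ab,\ a(a-b)).$$
A direct computation gives $s+t = b^2 > 0$ and verifies $st+su+tu = 0$. Since $\langle s,t\rangle = C(t/(s+t),\, 1/(2(s+t))) = C(a/b,\, 1/(2b^2)) = C_{a,b}$, the geometric match is automatic. Primitivity is short: a prime dividing all three entries would divide $t = ab$ and hence $a$ or $b$, and the remaining entries would then force it to divide the other of $a,b$, contradicting $a\perp b$.

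For ${\cal B} \subseteq {\cal P}$, I start with a primitive integer solution $(s,t,u)$ of the equation with $s+t > 0$, and aim to show $s+t = b^2$ with $b$ a positive integer dividing $t$; then $\langle s,t\rangle = C_{t/b,\,b}$ is a Ford circle once $t/b \perp b$ is checked. The heart of the argument, and what I expect to be the main obstacle, is showing that $N := s+t$ is a perfect square. My plan is a $p$-adic valuation analysis: setting $\alpha = v_p(s)$, $\beta = v_p(t)$, $\gamma = v_p(u)$, the relation $u(s+t) = -st$ gives $\gamma + v_p(N) = \alpha + \beta$; primitivity gives $\min(\alpha,\beta,\gamma) = 0$; and the ultrametric inequality forces $v_p(N) = \min(\alpha,\beta)$ whenever $\alpha \ne \beta$. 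A case split on whether $\alpha = \beta$ then forces either $v_p(N) = 0$ or $v_p(N) = 2\alpha$ with $\gamma = 0$. Either way $v_p(N)$ is even, so $N$ is a perfect square.

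The same case analysis gives $v_p(b) \le v_p(t)$ at every prime (so $a := t/b$ is a well-defined integer) and shows that $v_p(a)$ and $v_p(b)$ are never simultaneously positive (so $a \perp b$). Hence $\langle s,t\rangle = C_{a,b}$ lies in ${\cal P}$, completing the proof.
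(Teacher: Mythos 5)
Your proposal is correct, and its second half takes a genuinely different route from the paper. The forward inclusion is the same: the paper also sets $s=b^2-ab$, $t=ab$, $u=a^2-ab$, so that $\langle s,t\rangle=C_{a,b}$ and ${\cal P}\subseteq{\cal B}$. For the reverse inclusion, however, the paper does essentially no arithmetic: having shown ${\cal P}<{\cal B}$, it appeals to the maximality of ${\cal P}$ (Proposition 2.1), together with Lemma 2.3, to conclude that ${\cal B}$ can contain nothing beyond ${\cal P}$ (this tacitly uses that ${\cal B}$ is itself a normal family, i.e.\ that its circles have pairwise disjoint interiors). You instead prove ${\cal B}\subseteq{\cal P}$ directly from the identity $u(s+t)=-st$: the $p$-adic case split (either $v_p(s+t)=0$, or $\alpha=\beta>0$ forces $\gamma=0$ and $v_p(s+t)=2\alpha$) is sound, and the same analysis does give $b\mid t$ and $t/b\perp b$; the only loose end is the degenerate solutions in which some entry vanishes (e.g.\ $(1,0,0)$ and $(0,1,0)$, giving $C_{0,1}$ and $C_{1,1}$), which your valuation bookkeeping handles only with the convention $v_p(0)=\infty$ or by a separate one-line check. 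The trade-off: the paper's argument is shorter given the maximality machinery, and it then obtains Corollary 2.5 ($|a+b|$ is a perfect square) as a consequence of the theorem; your argument is self-contained elementary number theory that in effect proves Corollary 2.5 first and deduces ${\cal B}={\cal P}$ from it, bypassing the density/maximality argument of Proposition 2.1 entirely.
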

\begin{proof}  Given a Ford circle $C_{a,b}$, let $s:=b^2-ab$, $t:=ab$, and $u:=a^2-ab$.  
Note that $\langle s,t\rangle=C_{a,b}$ and it is easy to verify that $\langle s,t\rangle\in{\cal B}$.
Hence ${\cal P}<{\cal B}$ and, by the maximality of ${\cal P}$,  and Lemma 2.3, the theorem follows.  \end{proof}

A surprising result follows (see \cite{MN}).  

\begin{corollary}  If $(a,b,c)$ is a relatively prime integer solution of $a^2+b^2+c^2=(a+b+c)^2$  then $|a+b|$ is a perfect square.  \end{corollary}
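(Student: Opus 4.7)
The plan is to identify $\langle a, b\rangle$ with a Ford circle $C_{p,q}$ via Theorem 2.4 and then read off the radius in two different ways; this will immediately force $a+b = q^2$.

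First I would reduce to the case $a+b > 0$. Expanding $(a+b+c)^2 = a^2+b^2+c^2$ gives the identity $ab+ac+bc = 0$. If $a+b = 0$, substituting $b = -a$ into this identity yields $-a^2 = 0$, hence $a = b = 0$, and then relative primality forces $|c| = 1$; in this degenerate case $|a+b| = 0 = 0^2$. Otherwise, replacing $(a,b,c)$ by $(-a,-b,-c)$ if necessary (which preserves both the Diophantine equation and the gcd condition) allows me to assume $a+b > 0$.

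Under this assumption and with $\gcd(a,b,c)=1$, the triple $(a,b,c)$ satisfies exactly the defining conditions of ${\cal B}$, so $\langle a, b\rangle \in {\cal B}$. By Theorem 2.4, $\langle a, b\rangle \in {\cal P}$, so $\langle a, b\rangle = C_{p,q}$ for some $p, q \in {\Bbb Z}$ with $p \perp q$ and $q > 0$. Equating the two expressions for the radius of this circle gives $\frac{1}{2(a+b)} = \frac{1}{2q^2}$, whence $a+b = q^2$, a perfect square.

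I do not expect a serious obstacle here: the substantive work was done in Theorem 2.4, and this corollary is essentially a translation of the containment $\langle a, b\rangle \in {\cal P}$ into arithmetic language. The only care required concerns sign conventions and the degenerate triple $(0,0,\pm 1)$. As a byproduct, since the equation $a^2+b^2+c^2=(a+b+c)^2$ is symmetric in $a,b,c$, the same argument applied to the pairs $\{a,c\}$ and $\{b,c\}$ shows that $|a+c|$ and $|b+c|$ are perfect squares as well.
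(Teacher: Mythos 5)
Your proof is correct and follows essentially the same route as the paper's: place $\langle a,b\rangle$ in ${\cal B}$, invoke Theorem 2.4 to write it as a Ford circle $C_{p,q}$, and equate the two expressions for the radius to get $|a+b|=q^2$. Your extra handling of the sign and the degenerate triple $(0,0,\pm 1)$ is a minor (and welcome) tightening of a detail the paper leaves implicit.
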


\begin{proof}  If $(a,b,c)$ satisfies the hypothesis, then $\langle a,b\rangle\in{\cal B}$.  By Theorem 2.4,  $\langle a,b\rangle=C_{m,n}$ for some $m,n$.   The radii of these two circles are equal and therefore $|a+b|=n^2$.   \end{proof}

The results of this Section, with different proofs, have appeared in a paper \cite{MN} by the author and one of his students.

\section{Spheres, in general}

We identify ${\Bbb R}^3$ with ${\Bbb C}\times {\Bbb R}$.   For $z\in {\Bbb C}$ and $r>0$, let
$S(z,r)$ be the sphere with center $(z,r)$ and radius $r$.   This sphere can be visualized as the sphere above and tangent to the complex plane at $z$ with radius $r$.    We say that a sphere is \emph{normal} if it is of the form $S(z,r)$ for some $z\in {\Bbb C}$ and $r>0$.
As in equation (1), it is easy to verify that $S(z,r)$ and $S(w,s)$ are tangent (we write $S(z,r)||S(w,s)$) if and only if
$$|z-w|^2=4rs.\eqno{(3)}$$

Given three points in ${\Bbb C}$, it turns out that there is a unique set of three mutually tangent normal spheres tangent to ${\Bbb C}$ at those points.

\begin{proposition} $S(P_1,r_1)$,   $S(P_2,r_2)$, and $S(P_3,r_3)$ are mutually tangent if and only if 
$$r_1=\dfrac {|P_1-P_2|\cdot |P_1-P_3| }{2|P_2-P_3|}, r_2=\dfrac {|P_1-P_2|\cdot |P_2-P_3| }{2|P_1-P_3|}, r_3=\dfrac {|P_1-P_3|\cdot |P_2-P_3| }{2|P_1-P_2|}.$$  Consequently, given three points $P_1,P_2,P_3$, there exist unique real numbers $r_1,r_2,r_3>0$ such that $S(P_1,r_1)$,   $S(P_2,r_2)$, and $S(P_3,r_3)$ are mutually tangent.
 \end{proposition}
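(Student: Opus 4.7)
The plan is to reduce the tangency conditions directly to equation (3). Three normal spheres $S(P_1,r_1)$, $S(P_2,r_2)$, $S(P_3,r_3)$ are mutually tangent if and only if the three pairwise tangency equations hold:
$$|P_1-P_2|^2=4r_1r_2,\qquad |P_1-P_3|^2=4r_1r_3,\qquad |P_2-P_3|^2=4r_2r_3.$$
Writing $a:=|P_2-P_3|$, $b:=|P_1-P_3|$, $c:=|P_1-P_2|$, this is a symmetric system of three equations in the three unknowns $r_1,r_2,r_3$, and the proposition amounts to solving it.

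The key step is a simple algebraic manipulation: multiplying the first two equations and dividing by the third gives $b^2c^2/a^2=4r_1^2$, and since $r_1>0$ we conclude $r_1=bc/(2a)$, which is precisely the stated formula. The formulas for $r_2$ and $r_3$ follow by the same manipulation with the indices permuted. Conversely, substituting these three expressions into the left-hand sides of the three tangency equations gives $4r_ir_j$ equal to the correct squared distance in each case, so the spheres are indeed mutually tangent. This establishes the biconditional.

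For the consequence, note that whenever $P_1,P_2,P_3$ are three distinct points of ${\Bbb C}$ the quantities $a,b,c$ are all strictly positive, so the three explicit formulas produce well-defined positive real numbers $r_1,r_2,r_3$. Existence of a mutually tangent triple then follows from the ``if'' direction, and uniqueness from the ``only if'' direction, since the tangency equations determine the radii by the manipulation above.

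There is essentially no real obstacle; the statement is a direct computation once one writes equation (3) three times. The only subtlety worth flagging is that the proposition tacitly assumes $P_1,P_2,P_3$ are distinct (the formulas blow up or vanish otherwise), which is the natural hypothesis for three mutually externally tangent spheres sitting above ${\Bbb C}$.
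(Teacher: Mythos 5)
Your proof is correct and follows essentially the same route as the paper: write equation (3) three times, solve the resulting system by combining two of the equations with the third to isolate each $r_i$, and verify the converse by direct substitution. The remark that $P_1,P_2,P_3$ must be distinct (and, implicitly, non-collinear is not even needed) is a fine observation but does not change the argument.
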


\begin{proof}   Suppose the spheres $S(P_1,r_1)$,   $S(P_2,r_2)$, and $S(P_3,r_3)$ are mutually tangent.   By (3), $|P_1-P_2|^2=4r_1r_2$, $|P_1-P_3|^2=4r_1r_3$, and $|P_2-P_3|^2=4r_2r_3$.  Hence
$|P_1-P_2|^2/|P_1-P_3|^2=r_2/r_3$ and so 
$$\dfrac{|P_1-P_2|^2}{|P_1-P_3|^2}\cdot |P_2-P_3|^2=\dfrac{r_2}{r_3}\cdot 4r_2r_3=4r_2^2.$$
It follows that $r_2=\dfrac {|P_1-P_2|\cdot |P_2-P_3| }{2|P_1-P_3|}$.  Similarly, $r_1=\dfrac {|P_1-P_2|\cdot |P_1-P_3| }{2|P_2-P_3|}$ and $ r_3=\dfrac {|P_1-P_3|\cdot |P_2-P_3| }{2|P_1-P_2|}$.  

Conversely, suppose $r_1=\dfrac {|P_1-P_2|\cdot |P_1-P_3| }{2|P_2-P_3|}$, $r_2=\dfrac {|P_1-P_2|\cdot |P_2-P_3| }{2|P_1-P_3|}$, and $r_3=\dfrac {|P_1-P_2|\cdot |P_2-P_3| }{2|P_1-P_3|}$.    Then 
$4r_1r_2=|P_1-P_2|^2$ and so, by (3),   $S(P_1,r_1)||S(P_2,r_2)$.   Similarly,  $S(P_1,r_1)||S(P_3,r_3)$ and  $S(P_3,r_3)||S(P_2,r_2)$.   \end{proof}

It is well known that M\"obius transformations take circles to circles (straight lines are considered circles too since, on the Riemann sphere, they are circles through $\infty$;  see \cite{Ne}).   We shall use the notation
$$\begin{pmatrix}
a&b\\
c&d\\
\end{pmatrix}(z):=\dfrac{az+b}{cz+d}.$$

Every M\"obius transformation ${\Bbb C}\rightarrow {\Bbb C}$ extends to a unique sphere-preserving continuous map ${\Bbb C}\times [0,\infty)\rightarrow {\Bbb C}\times [0,\infty)$ known as its Poincar\'e extension.   Its formula is available in \cite{Beardon}, for example.    We shall use the fact that such an extension exists (even without needing its explicit formula) to express it in terms of normal spheres. Given a M\"obius transformation $m(z)$,  extend $m$ to normal spheres by  defining 
$$\hat m: S(z,r)\longmapsto S(m(z), |m'(z)|r). \eqno{(4)}$$
Since 
$$|m(z)-m(w)|^2=|m'(z)||m'(w)||z-w|^2,$$
it follows that
if  $S(z,r)||S(w,s)$ then $\hat m(S(z,r))||\hat m(S(w,s))$.     That is, $\hat m$ preserves tangency of spheres.  

The Poincar\'e extension $\overline m$ of $m$, since it takes normal spheres to normal spheres, also acts as a function on the set of normal spheres:  for some $r'$ depending on $z$ and $r$,
$$\overline m(S(z,r))=S(m(z), r').$$   
Given $(w,t)\in  {\Bbb C}\times {\Bbb R}^+$,  $(w,t)$ is a point of interSection of two normal spheres,  say $S(z_1,r_1)$ and $S(z_2, r_2)$.   Let $S(z_3,r_3)$ be an arbitrarily chosen normal sphere tangent to both  $S(z_1,r_1)$ and $S(z_2, r_2)$.    Since $\{\overline m(S(z_i,r_i)): i=1,2,3\}$ and $\{\hat m(S(z_i,r_i)): i=1,2,3\}$ have the same three points of tangency to ${\Bbb C}$,  Proposition 3.1 implies $\overline m(S(z_i,r_i))=\hat m(S(z_i,r_i))$ for $i=1,2,3$ and therefore
$$ \hat m(S(z_1,r_1))\cap  \hat m(S(z_2,r_2))=\{\overline m(w,t)\}.$$
That is, the Poincar\'e extension $\overline m$ of $m$ is actually defined by (4).  

We now describe two ways to parameterize normal spheres.  First, for $\alpha,\beta\in{\Bbb C}$, let
$$S_{\alpha,\beta}:=S\left(\frac {\alpha}{\beta}, \frac 1{2|\beta|^2}\right).$$
Clearly, every normal sphere can be uniquely represented thus:
$$S(z,r)=S_{z/\sqrt{2r},1/\sqrt{2r}}.$$
We interpret $S_{1,0}$ to be the plane parallel, and one unit above, ${\Bbb C}$.
The Poincar\'e extension is easily seen to have a simple formulation in terms of these spheres:

\begin{proposition}  If $\Delta:=\sqrt{|ad-bc|}$ then 
$$\begin{pmatrix}
a&b\\
c&d\\
\end{pmatrix}(S_{\alpha,\beta}):=S_{(a\alpha+b\beta)/\Delta, (c\alpha+d\beta)/\Delta}.$$\end{proposition}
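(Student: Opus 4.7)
The plan is to combine two facts already in hand: (i) the Poincar\'e extension $\overline m$ agrees on normal spheres with the map $\hat m$ of equation (4), as was just established in the paragraphs preceding the statement; and (ii) the explicit description $S_{\alpha,\beta}=S\bigl(\alpha/\beta,\,1/(2|\beta|^2)\bigr)$. Once these are in place, the proposition reduces to a direct computation, so the argument is essentially bookkeeping.

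First I would unpack the left-hand side. Writing $z=\alpha/\beta$ and $r=1/(2|\beta|^2)$, the formula (4) gives $\hat m(S_{\alpha,\beta})=S(m(z),|m'(z)|\,r)$. I would then simplify $m(z)=(a\alpha+b\beta)/(c\alpha+d\beta)$ directly, and use $m'(z)=(ad-bc)/(cz+d)^2$ to obtain
\[
|m'(z)|\,r \;=\; \frac{|ad-bc|\,|\beta|^2}{|c\alpha+d\beta|^2}\cdot\frac{1}{2|\beta|^2} \;=\; \frac{|ad-bc|}{2|c\alpha+d\beta|^2}.
\]
Next I would unpack the right-hand side using the definition of $S_{\cdot,\cdot}$ applied to the pair $\bigl((a\alpha+b\beta)/\Delta,\,(c\alpha+d\beta)/\Delta\bigr)$: its tangency point on ${\Bbb C}$ is the quotient $(a\alpha+b\beta)/(c\alpha+d\beta)$ (the $\Delta$'s cancel), and its radius is $\Delta^2/\bigl(2|c\alpha+d\beta|^2\bigr)=|ad-bc|/\bigl(2|c\alpha+d\beta|^2\bigr)$. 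Matching this with the expression for $\hat m(S_{\alpha,\beta})$ finishes the proof.

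I would also note two small caveats worth mentioning in passing: the value of $\Delta$ is chosen as $\sqrt{|ad-bc|}$ (not $\sqrt{ad-bc}$), which is exactly what is needed so that $\Delta^2=|ad-bc|$ and the radius comes out positive regardless of the sign of the determinant; and the formula should be understood to respect the convention $S_{1,0}$ equals the horizontal plane one unit above ${\Bbb C}$, so the case $c\alpha+d\beta=0$ (where the image sphere degenerates to a plane) is handled consistently with the convention already introduced. The main obstacle is really just the algebraic care required to track the $|\beta|^2$ and $|c\alpha+d\beta|^2$ factors correctly; there is no conceptual difficulty, since the key equality $\overline m=\hat m$ has been proved in the text immediately preceding the statement.
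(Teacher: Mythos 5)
Your argument is correct and is essentially the paper's own proof: both sides reduce, via the derivative formula $m'(z)=(ad-bc)/(cz+d)^2$ and the identification of the Poincar\'e extension with the map of equation (4), to the sphere $S\bigl((a\alpha+b\beta)/(c\alpha+d\beta),\,|ad-bc|/(2|c\alpha+d\beta|^2)\bigr)$; the paper merely runs the same computation starting from the right-hand side instead of the left. Your remarks on the choice $\Delta=\sqrt{|ad-bc|}$ and on the degenerate case $c\alpha+d\beta=0$ (handled by the convention for $S_{1,0}$) are harmless additions not spelled out in the paper.
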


\begin{proof}  Since $$\begin{pmatrix}
a&b\\
c&d\\
\end{pmatrix}'(z)=\dfrac{ad-bc}{(cz+d)^2},$$
$$\begin{aligned} S_{(a\alpha+b\beta)/\Delta, (c\alpha+d\beta)/\Delta}&=S\left(\dfrac{a\alpha+b\beta}{c\alpha+d\beta},\dfrac{|ad-bc|}{2|c\alpha+d\beta|^2}\right)\\
&=S\left(\begin{pmatrix}
a&b\\
c&d\\
\end{pmatrix}\left(\dfrac{\alpha}{\beta}\right),\left|m'\left(\dfrac{\alpha}{\beta}\right)\right|\dfrac 1{2|\beta|^2}\right)\\
&=\begin{pmatrix}
a&b\\
c&d\\
\end{pmatrix}\left(S\left(\dfrac{\alpha}{\beta},\dfrac {1}{2|\beta|^2}\right)\right)=\begin{pmatrix}
a&b\\
c&d\\
\end{pmatrix}(S_{\alpha,\beta}).\end{aligned}$$\end{proof}

A fact that will be used in the next Section is the following.

\begin{proposition}  Given any three mutually tangent normal spheres, there are exactly two normal spheres so that  each one,  with the original three,  form four mutually tangent spheres (with the understanding that planes parallel to the complex plane are spheres tangent at $\infty$).   \end{proposition}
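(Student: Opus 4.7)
The plan is to normalize the configuration using a M\"obius transformation, enumerate the tangent fourth spheres in the resulting model, and pull back. Given three mutually tangent normal spheres $S(P_i, r_i)$ for $i=1,2,3$, I would first apply the M\"obius transformation $m(z) = 1/(z - P_1)$. By Proposition 3.2, interpreting its output $S_{\beta, \alpha - P_1\beta}$ as a plane parallel to ${\Bbb C}$ when the second parameter vanishes (which occurs precisely when $\alpha/\beta = P_1$), $\hat m$ sends $S(P_1, r_1)$ to such a plane, sends the other two spheres to ordinary normal spheres, and preserves tangency. Composing further with an orientation-preserving affine map $z \mapsto az + b$ (also a M\"obius transformation), I can arrange that the normalized configuration consists of $S_{1,0}$ (the plane at height $1$), $S(0, 1/2)$, and $S(1, 1/2)$. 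Call the composite $\hat T$; it is a tangency-preserving bijection on the set of normal spheres together with planes parallel to ${\Bbb C}$.

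Next I would count directly the admissible fourth spheres in this model. A normal sphere tangent to the plane at height $1$ must have radius $1/2$; tangency to $S(0, 1/2)$ and $S(1, 1/2)$ via equation (3) then reduces to the pair of equations $|w|^2 = 1$ and $|w-1|^2 = 1$, whose simultaneous solutions are exactly the two points $w = 1/2 \pm i\sqrt{3}/2$. A plane parallel to ${\Bbb C}$ tangent to both $S(0,1/2)$ and $S(1,1/2)$ would have to be either ${\Bbb C}$ itself or the existing plane at height $1$, yielding nothing new. Hence the normalized configuration admits exactly two admissible fourth spheres.

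Pulling back by $\hat T^{-1}$ yields exactly two fourth spheres for the original configuration, one of which may appear as a plane parallel to ${\Bbb C}$ in the case that the corresponding model tangent point happens to equal $T(\infty)$. The main technical subtlety is extending Proposition 3.2 correctly to the case of vanishing second parameter so that $\hat T$ is genuinely a tangency-preserving bijection on the enlarged set of normal spheres plus planes parallel to ${\Bbb C}$. This can be handled by decomposing $m$ as a composition of translations, dilations, and the inversion $z \mapsto 1/z$, computing the action on $S_{\alpha,\beta}$ for each factor (including the $\beta = 0$ limit), and noting that the tangency criterion $|z-w|^2 = 4rs$ extends continuously to the plane case where it becomes the statement that the plane at height $2r$ is tangent to each sphere of radius $r$.
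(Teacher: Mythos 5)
Your proposal is correct, and it follows the paper's overall strategy (normalize by a M\"obius transformation acting through the Poincar\'e extension, settle a model configuration, pull back), but the model and the counting step are genuinely different. The paper sends the three tangent points to $0,1,1+\omega$, invokes Proposition 3.1 to conclude the images are the three radius-$1/2$ spheres over an equilateral triangle, exhibits the plane $S_{1,0}$ as one child, and identifies the second child ($S_{1,1-\omega}$) by a synthetic locus argument (the tangent points of spheres tangent to two of the three lie on a line, and the three lines are concurrent). You instead normalize so that one of the three given spheres becomes the plane at height $1$ and the other two become $S(0,1/2)$ and $S(1,1/2)$; this makes the enumeration essentially trivial, since tangency to the plane forces radius $1/2$ and equation (3) reduces the tangent point to the two intersections of the unit circles about $0$ and $1$, with the plane case checked separately. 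The trade-off is exactly the technical debt you flag: your normalizing map deliberately sends a given sphere to a plane, so you must extend Proposition 3.2 (or formula (4)) to the degenerate second parameter and verify that $\hat T$ is a tangency-preserving bijection of normal spheres together with planes parallel to ${\Bbb C}$ --- formula (4) does not apply literally at $P_1$ since $|m'(P_1)|$ is undefined there, so the interpretation of $S_{\gamma,0}$ (a plane at height $|\gamma|^2$, consistent with the paper's convention for $S_{1,0}$ and with the tangency criterion $|\alpha\delta-\beta\gamma|=1$) needs to be spelled out. The paper's normalization keeps all three given spheres genuine spheres and so avoids this at the front end, though it too quietly uses the enlarged family when it pulls back $S_{1,0}$ and when it asserts that any fourth tangent sphere (possibly a plane) maps to one of the two model children; so your bookkeeping burden is comparable to what a fully rigorous version of the paper's proof would need, and your counting argument is the more elementary of the two.
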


\begin{proof}  Recall $\omega:=(-1+i\sqrt 3)/2$ is a cube root of unity and that $0, 1, 1+\omega$ form the vertices of an equilateral triangle.   The spheres $S_{0,1}, S_{1,1}, S_{1+\omega, 1}$ form three mutually tangent spheres each of radius 1/2.   Clearly the plane $z=1$ parallel to the complex plane (also denoted $S_{1,0}$) is tangent to all the original three.   The locus of tangent points of spheres that are tangent to two spheres chosen from $S_{0,1}, S_{1,1}, S_{1+\omega, 1}$ form a straight line and so there is a unique sphere tangent to ${\Bbb C}$ at the interSection of three straight lines that is tangent to $S_{0,1}, S_{1,1}, S_{1+\omega, 1}$.   Therefore the proposition holds for the three spheres.

Now, suppose we have three mutually tangent spheres tangent at, say, $z_1, z_2, z_3$ respectively.   There is a M\"obius transformation taking these three points to $0,1,1+\omega$ respectively and so, by
Proposition 3.1, the Poincar\'e extension takes the spheres to $S_{0,1}, S_{1,1}, S_{1+\omega, 1}$.    Any sphere tangent to all three also get mapped to either $S_{1,0}$ or $S_{1,1-\omega}$ and therefore the inverse map applied to $S_{1,0}$ and $S_{1,1-\omega}$ gives all of the spheres tangent to those at $z_1, z_2, z_3$ . 
\end{proof}

Recall the concept of ``barycentric coordinates".  
Given three non-colinear points in the plane (we henceforth define $P_0=0, P_1=1, P_2=1+\omega$ so that the three points form an equilateral triangle of side length 1), it is possible to express every point in the plane uniquely as a real convex combination of the three:
 $z=aP_0+bP_1+cP_2,  a+b+c=1$.   In this case, we say that $z$ has barycentric coordinates $(a,b,c)$.   
 Removing the condition $a+b+c=1$, we say that $z$ has ``projective barycentric coordinates" $(a,b,c)$ if 
 $z=(aP_0+bP_1+cP_2)/(a+b+c)$.     It is then possible to describe every normal sphere uniquely in terms of terms of three real numbers $a,b,c$ where $a+b+c>0$:
  $$\langle a,b,c\rangle:= S\left(\frac{aP_1+bP_2+cP_3}{a+b+c}, \frac1{2(a+b+c)}\right).$$
 
 We refer this parameterization as ``barycentric" and note that every normal sphere has a barycentric representation:  for real $x,y,r$ with $r>0$, 
 $$S(x+iy,r)=\left\langle\frac{1-x-y/\sqrt 3}{2r}, \frac{x-y/\sqrt 3}{2r}, \frac{2y/\sqrt 3}{2r}\right\rangle.$$

The barycentric parameterization allows a test for tangency.   Consider
the bilinear form 
$$Q({\bf u},{\bf v}):=({\bf u}\cdot {\bf 1})({\bf v}\cdot {\bf 1})-{\bf u}\cdot{\bf v}$$
where ${\bf u},{\bf v}\in {\Bbb R}^4$ and ${\bf 1}:=(1,1,1,1)$.  Let 
$${\cal U}: =\{(a,b,c,d)\in{\Bbb R}^4:  (a+b+c+d)^2=a^2+b^2+c^2+d^2, a+b+c>0\},$$
and, for ${\bf u}:=(a,b,c,d)$, let $\langle {\bf u}\rangle$ be the sphere $\langle a,b,c\rangle$.


\begin{theorem} For ${\bf u,v}\in{\cal U}$, 
$$Q({\bf u},{\bf u})=0,$$
$$Q({\bf u},{\bf v})=1 \text{ if and only if }\langle {\bf u}\rangle ||\langle {\bf v}\rangle.$$
\end{theorem}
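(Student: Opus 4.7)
The first identity is immediate from the definition of ${\cal U}$:
$$Q({\bf u},{\bf u})=({\bf u}\cdot{\bf 1})^2-{\bf u}\cdot{\bf u}=(a+b+c+d)^2-(a^2+b^2+c^2+d^2)=0.$$

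For the tangency equivalence, write ${\bf u}=(a,b,c,d)$, ${\bf v}=(a',b',c',d')$ and set $s:=a+b+c$, $s':=a'+b'+c'$ (both positive by definition of ${\cal U}$), $T:=s+d$, $T':=s'+d'$. Then $\langle{\bf u}\rangle=S(z,1/(2s))$ and $\langle{\bf v}\rangle=S(z',1/(2s'))$ with $z=(aP_1+bP_2+cP_3)/s$ and similarly for $z'$, so by equation (3) tangency is equivalent to $|z-z'|^2=1/(ss')$. My plan is to establish the sharper identity
$$|z-z'|^2=\frac{Q({\bf u},{\bf v})}{ss'},$$
from which the desired equivalence is immediate.

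To compute $|z-z'|^2$, I would clear denominators: $ss'(z-z')=\alpha P_1+\beta P_2+\gamma P_3$ where $\alpha:=s'a-sa'$, $\beta:=s'b-sb'$, $\gamma:=s'c-sc'$, and observe $\alpha+\beta+\gamma=s's-ss'=0$. Since $P_1,P_2,P_3$ are the vertices of an equilateral triangle of side $1$, the translation-invariant identity $|\sum\lambda_i P_i|^2=-\sum_{i<j}\lambda_i\lambda_j|P_i-P_j|^2$ (valid whenever $\sum\lambda_i=0$) collapses to $-(\alpha\beta+\alpha\gamma+\beta\gamma)=(\alpha^2+\beta^2+\gamma^2)/2$, using $\alpha+\beta+\gamma=0$ once more. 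Hence $|z-z'|^2=(\alpha^2+\beta^2+\gamma^2)/(2(ss')^2)$, and the theorem reduces to the algebraic identity $\alpha^2+\beta^2+\gamma^2=2ss'\,Q({\bf u},{\bf v})$.

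This final identity is where the ${\cal U}$-constraints genuinely enter. Expanding $\alpha^2+\beta^2+\gamma^2$ yields $(s')^2(a^2+b^2+c^2)+s^2((a')^2+(b')^2+(c')^2)-2ss'(aa'+bb'+cc')$, and the relations defining ${\cal U}$ let me rewrite $a^2+b^2+c^2=T^2-d^2=s(s+2d)$ and analogously for the primed coordinates. Substituting, factoring out $ss'$, and applying $TT'-dd'=ss'+sd'+s'd$ should collapse the bracket to $TT'-(aa'+bb'+cc'+dd')=Q({\bf u},{\bf v})$. I expect this final bookkeeping step---tracking how the extra $d,d'$ contributions forced by the ${\cal U}$-constraints combine with the cross terms to recreate precisely the missing $dd'$ in the inner product---to be the only delicate part of the argument.
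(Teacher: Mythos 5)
Your proposal is correct and takes essentially the same route as the paper's proof: both expand the squared distance between the tangency points as a quadratic form in the barycentric coordinates (using that $P_1,P_2,P_3$ span an equilateral triangle of side length 1), compare with the tangency criterion (3), and invoke the defining constraint of ${\cal U}$ (equivalently $d=-(ab+ac+bc)/(a+b+c)$) to recast the condition as $Q({\bf u},{\bf v})=1$. Your packaging---clearing denominators, using $\alpha+\beta+\gamma=0$ to get the one-line norm formula, and proving the sharper exact identity $ss'\,|z-z'|^2=Q({\bf u},{\bf v})$ rather than the paper's chain of equivalent conditions (5)--(6)---is a clean streamlining, and the final bookkeeping you flag does close exactly as you predict.
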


\begin{proof}   Since $Q((a,b,c,d),(a,b,c,d)):=(a+b+c+d)^2-(a^2+b^2+c^2+d^2)$, the first claim is obvious.

Recall $P_j$ ($j=1,2,3$) form the vertices of an equilateral triangle with side length 1. Note that since   $|P_i|^2+|P_j|^2-2\Re(\overline
P_iP_j)=|P_i-P_j|^2=1$ if $i\neq j$, the norm squared of an ${\Bbb R}$-linear
combination of the $P_i$ is
$$\begin{aligned}&|xP_1+yP_2+zP_3|^2=(xP_1+yP_2+zP_3)(x\overline P_1+y\overline P_2+
z\overline
P_3)\\&=x^2|P_1|^2+y^2|P_2|^2+z^2|P_3|^2+2xy{\Re}(\overline
P_1P_2)+2xz{\Re}(\overline P_1P_3)+2yz{\Re}(\overline P_2P_3)\\&=
(x+y+z)(x|P_1|^2+y|P_2|^2+z|P_3|^2)-(xy+xz+yz).\end{aligned}$$ 
Hence,
assuming $a+b+c=1=s+t+u$,
$$\begin{aligned}&|(aP_1+bP_2+cP_3)-(sP_1+tP_2+uP_3)|^2
\\&=|(a-s)P_1+(b-t)P_2+(c-u)P_3|^2 \\&=-
[(a-s)(b-t)+(a-s)(c-u)+(b-t)(c-u)]\\&=
(a^2+b^2+c^2+s^2+t^2+u^2-2as-2bt-2cu)/2 .\end{aligned}$$

Let $\langle a,b,c\rangle$ and $\langle s,t,u\rangle$ be two
spheres.  Then the square of the distance between their tangent
points is
$$\dfrac12\left(\dfrac{a^2+b^2+c^2}{(a+b+c)^2}+\dfrac{s^2+t^2+u^2}
{(s+t+u)^2}-\dfrac{2(as+bt+cu)}{(a+b+c)(s+t+u)}\right)$$ and
therefore they are tangent if and only if
$$(s+t+u)\dfrac{a^2+b^2+c^2}{a+b+c}+(a+b+c)\dfrac{s^2+t^2+u^2}{s+t+u}
=2(1+as+bt+cu).\eqno{(5)}$$

Let $d:=-(ab+ac+bc)/(a+b+c)$ and $v:=-(st+su+tu)/(s+t+u)$.  Since
$(a+b+c)^2=a^2+b^2+c^2+2(ab+ac+bc)$, it follows that
$a+b+c=(a^2+b^2+c^2)/(a+b+c)-2d$ and a similar result holds for
$s,t,u$ and $v$.  The left hand side of equation (5) then becomes
$(s+t+u)(a+b+c+2d)+(a+b+c)(s+t+u+2v)=2(a+b+c+d)(s+t+u+v)-2dv$ and
so the spheres are tangent if and only if
$$(a+b+c+d)(s+t+u+v)=1+as+bt+cu+dv.\eqno{(6)}$$
\end{proof}

\noindent{\bf Remark.}  A very direct proof of this theorem is as follows.   Given $\alpha:=x+y\omega$,  $\beta:=u+v\omega$,  $\gamma:=X+Y\omega$, and $\delta:=U+V\omega$ for real numbers $x,y,u,v,X,Y,U,V$,  
$$S_{\alpha,\beta}=\langle a,b,c\rangle$$
where $a: = u^2+v^2-uv+xv-xu-yv, b:= xu-yu+yv,  c:=yu-xv$, and $d: = x^2+y^2-xy+xv-xu-yv$.   Similarly, 
$S_{\gamma,\delta}=\langle A,B,C\rangle$ for similarly defined $A,B,C$, and $D$.    It is then an elementary but tedious calculation to see
that 
$$|\alpha\delta-\beta\gamma|^2=Q((a,b,c,d), (A,B,C,D)).$$

\section{Ford spheres:  tetrahedral case}
\begin{figure}[h] 
   \centering
   \includegraphics[ width=2 in]{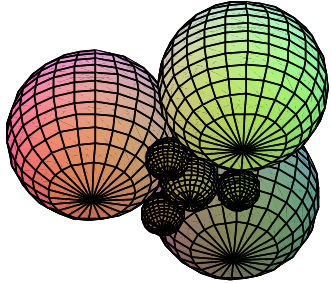} 
   \caption{Ford spheres in tetrahedral arrangement.}
   \label{fig. 3}
\end{figure}

Just as Ford circles ${\cal P}$ were parameterized by ${\Bbb Q}$,  we shall parameterize a certain class of normal spheres ${\cal P}_{\omega}$  by the Eisenstein rationals ${\Bbb Q}(\omega)$.  Further, we shall show that the spheres of ${\cal P}_{\omega}$ can be parametrized barycentrically by solutions of a certain Diophantine equation.  A recursive geometric construction of these spheres is also shown.    In this way, we show an analogue of Theorem 2.4 equating three parameterizations  ${\cal P}_{\omega}, {\cal B}_{\omega},$ and ${\cal G}_{\omega}$.

After defining them appropriately, there are many possible strategies to showing they are all equal.   Ours will be to show 
${\cal G}_{\omega}\subset {\cal P}_{\omega}$,  ${\cal P}_{\omega}\subset{\cal B}_{\omega}$, and, finally, ${\cal B}_{\omega}\subset{\cal G}_{\omega}$.   These spheres have been studied previously by Hellegouch and Rieger in \cite{H1},\cite{H2}, \cite{R1}, and \cite{R2}.   

\vskip .1 in

Recall $\omega:=(-1+i\sqrt 3)/2$ so that  $\omega^3=1$ and $\omega^2=\overline\omega$.  
It is well known that the set of Eisenstein integers ${\Bbb Z}[\omega]:=\{a+b\omega:  a,b\in{\Bbb Z}\}$ is a Euclidean ring (see \cite{IR}).  Its norm is $N(a+b\omega)=|a+b\omega|^2=a^2-ab+b^2$.     As ${\Bbb Z}[\omega]$ is a UFD containing ${\Bbb Z}$,  not every prime in ${\Bbb Z}$ is prime in ${\Bbb Z}[\omega]$.    It turns out though that $\rho\in{\Bbb Z}[\omega]$ is prime if and only if either $N(\rho)$ is the square of a prime congruent to 1 modulo 3 or equals a prime not congruent to 1 modulo 3 (see \cite{IR}).  The \emph{Eisenstein rationals} are the members of the field
$${\Bbb Q}(\omega):=\{r+s\omega:  r,s\in {\Bbb Q}\}=\{\alpha/\beta:  \alpha,\beta\in{\Bbb Z}[\omega], \beta\neq 0\}.$$

${\Bbb Z}[\omega]$ has exactly six units:  $\pm 1, \pm\omega$, and $\pm\overline\omega$.    We say that Eisenstein integers $\alpha,\beta$ are relatively prime (we write $\alpha\perp\beta$) if the only Eisenstein integers dividing both $\alpha$ and $\beta$ are units.    Since ${\Bbb Z}[\omega]$ is a unique factorization domain (UFD), $\alpha\perp\beta$ if and only if  there exist two Eisenstein integers $x,y$ such that $\alpha x+\beta y$ is a unit.  

Recall that for $\alpha,\beta\in{\Bbb C}$, 
$$S_{\alpha,\beta}:=S\left(\frac {\alpha}{\beta}, \frac 1{2|\beta|^2}\right).$$
We define the set of Ford spheres:
$${\cal P}_{\omega}:=\{S_{\alpha,\beta}:  \alpha,\beta\in{\Bbb Z}[\omega],  \alpha\perp\beta\}.$$

By Proposition 3.3, given three mutually tangent spheres, there is a fourth sphere so that these four are mutually tangent.   We say that these four spheres are in a tetrahedral arrangement (since their contact graph is a tetrahedron).     In fact, there are two ways to do this;  we rephrase Proposition 3.3 thus:

\begin{lemma} There are exactly two tetrahedral arrangements of spheres containing three given mutually tangent spheres. \end{lemma}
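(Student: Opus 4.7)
The plan is to observe that Lemma 4.1 is essentially just a restatement of Proposition 3.3 in the language of tetrahedral arrangements, and so the proof reduces to unpacking definitions and noting that the two completing spheres produce two genuinely distinct arrangements.

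First I would recall the definition just given: three mutually tangent normal spheres together with a fourth normal sphere tangent to all three form a tetrahedral arrangement (the contact graph being $K_4$). So specifying a tetrahedral arrangement containing three prescribed mutually tangent spheres $\Sigma_1,\Sigma_2,\Sigma_3$ is the same as specifying a fourth normal sphere $\Sigma$ that is tangent to each of $\Sigma_1,\Sigma_2,\Sigma_3$ (where, per the earlier convention, a horizontal plane above $\mathbb{C}$ is allowed as a normal sphere tangent to $\mathbb{C}$ at $\infty$).

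Next I would invoke Proposition 3.3 directly: it asserts that given any three mutually tangent normal spheres, there are exactly two normal spheres that can each serve as such a fourth sphere. The proof of that proposition already handles the normalization step via a Möbius transformation carrying the three tangency points to $0,1,1+\omega$ (using Proposition 3.1's unique-radii statement together with the fact that the Poincaré extension $\hat m$ preserves tangency), after which one explicitly identifies the two completing spheres as the plane $S_{1,0}$ and the sphere $S_{1,1-\omega}$. So the existence and count of fourth spheres is already in hand.

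The only step I would need to verify to conclude the lemma is that these two choices of fourth sphere really yield two distinct tetrahedral arrangements rather than (somehow) the same one. This is immediate: two arrangements that differ on one of their four members are different as sets of spheres, and the two completing spheres are distinct because (in the normalized picture) $S_{1,0}$ is a plane while $S_{1,1-\omega}$ is a genuine sphere of positive finite radius, and the Möbius correspondence pulls this distinction back. I do not expect any real obstacle here — the content of the lemma is Proposition 3.3, and this short argument is essentially bookkeeping to recast that proposition in the tetrahedral language that will be convenient for the recursive construction of ${\cal G}_\omega$ in the remainder of Section 4.
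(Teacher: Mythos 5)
Your proposal is correct and matches the paper exactly: the paper offers no separate argument for Lemma 4.1, presenting it explicitly as a rephrasing of Proposition 3.3 (whose proof normalizes via a M\"obius transformation to the spheres $S_{0,1}, S_{1,1}, S_{1+\omega,1}$ and exhibits the two completing spheres $S_{1,0}$ and $S_{1,1-\omega}$). Your added remark that the two fourth spheres give genuinely distinct arrangements is harmless bookkeeping beyond what the paper bothers to state.
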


We now define ${\cal G}_{\omega}$ recursively.    Start with the set ${\cal S}_0:=\{S_{0,1}, S_{1,1}, S_{1+\omega, 1}\}$, perform the following process.   Given ${\cal S}_n$, choose from it three mutually tangent spheres, and add the two as defined in Lemma 4.1 and thus form ${\cal S}_{n+1}$.  In this way, we have a ``chain" of sets ${\cal S}_0\subset {\cal S}_1 \subset ...$.   Let ${\cal G}_{\omega}$ be the union of all elements in all such chains.     Let the \emph{ rank } of a sphere $S$ in ${\cal G}_{\omega}$ denote the length of the shortest (finite) chain containing $S$.   For example,  rank$(S_{1,1})=0$ and rank$(S_{1,1-\omega})=1$.    Hence a sphere of rank $n$ has three ``parents",  each of rank strictly less than $n$,  so that all four are in a tetrahedral arrangement.     Figure 2 illustrates (some of) the spheres of rank at most 2:   
along with the three spheres in ${\cal S}_0$, we include the ``child" $S_{1,1-\omega}$ of those three (the other child, $S_{1,0}$, being a plane parallel to the complex plane, is not shown) as well as children of each of the triples formed by pairs chosen from ${\cal S}_0$ and the sphere $S_{1,1-\omega}$.

\vskip .1 in
  
  \begin{lemma}  ${\cal G}_{\omega}\subset{\cal P}_{\omega}$.  \end{lemma}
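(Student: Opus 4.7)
The plan is to prove ${\cal G}_\omega\subset{\cal P}_\omega$ by strong induction on the rank of a sphere $S\in{\cal G}_\omega$. The base case (rank $0$) is immediate: the three spheres of ${\cal S}_0$ are $S_{0,1}$, $S_{1,1}$, $S_{1+\omega,1}$, each visibly of the form $S_{\alpha,\beta}$ with $\alpha,\beta\in{\Bbb Z}[\omega]$ and $\alpha\perp\beta$. For the inductive step, let $S$ have rank $n>0$, so $S$ is one of the two children (produced by Lemma 4.1) of three mutually tangent spheres of strictly smaller rank, which by the inductive hypothesis are Ford spheres $S_{\alpha_i,\beta_i}$ for $i=1,2,3$.

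The core idea is to reduce to a standard configuration via a M\"obius transformation whose matrix has entries in ${\Bbb Z}[\omega]$ and unit determinant, then push back. Since ${\Bbb Z}[\omega]$ is Euclidean and $\alpha_1\perp\beta_1$, Bezout produces $a,b\in{\Bbb Z}[\omega]$ with $a\alpha_1+b\beta_1=1$, so $M:=\begin{pmatrix} a & b \\ -\beta_1 & \alpha_1 \end{pmatrix}$ satisfies $\det M=1$. By Proposition 3.2 (with $\Delta=1$), $M$ sends $S_{\alpha,\beta}$ to $S_{a\alpha+b\beta,\,-\beta_1\alpha+\alpha_1\beta}$ and preserves tangency; the Cramer identity combined with $\det M$ being a unit guarantees that the image pair is again coprime in ${\Bbb Z}[\omega]$, so $M$ takes Ford spheres to Ford spheres. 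In particular $M(S_{\alpha_1,\beta_1})=S_{1,0}$, the plane one unit above ${\Bbb C}$; after multiplying the other two images through by the inverse of their (necessarily unit) second coordinate, the reduced triple becomes $S_{1,0},\,S_{\gamma_2,1},\,S_{\gamma_3,1}$ with $\gamma_2,\gamma_3\in{\Bbb Z}[\omega]$ and $|\gamma_2-\gamma_3|=1$.

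In this reduced picture any normal sphere tangent to the plane $S_{1,0}$ has radius $1/2$, and so (after a unit normalization) must take the form $S_{\gamma,1}$; the remaining two tangency conditions then force $\gamma-\gamma_2$ and $\gamma-\gamma_3$ both to be Eisenstein units. A short direct check on the six-element unit group $\{\pm 1,\pm\omega,\pm\omega^2\}$ shows that for each fixed unit $u=\gamma_3-\gamma_2$ there are exactly two $v\in{\Bbb Z}[\omega]$ with both $v$ and $v-u$ units, so $\gamma=\gamma_2+v$ yields exactly two Ford spheres tangent to all three reduced parents. Since Lemma 4.1 says there are only two normal spheres tangent to any three mutually tangent normal spheres, these two Ford spheres are necessarily the two children in the reduced configuration.

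Finally, $M^{-1}=\begin{pmatrix}\alpha_1&-b\\\beta_1&a\end{pmatrix}$ also has entries in ${\Bbb Z}[\omega]$ and determinant $1$, so by the same application of Proposition 3.2 it sends Ford spheres to Ford spheres, and carries the two reduced children back to the two children of the original triple; hence $S\in{\cal P}_\omega$ and the induction closes. The main obstacle I expect is the careful handling of the boundary case $\beta=0$ (the plane $S_{1,0}$): it must be treated uniformly as a legitimate element of ${\cal P}_\omega$ (with $\alpha\perp\beta=0$ meaning $\alpha$ is a unit), both inside the M\"obius formulas of Proposition 3.2 and in the tangency bookkeeping, since a child can itself emerge as a plane even when none of the original parents is one. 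Once this bookkeeping is made consistent, the remaining work is simply Bezout in ${\Bbb Z}[\omega]$ and the finite unit enumeration.
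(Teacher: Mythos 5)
Your proof is correct, and while it shares the paper's overall skeleton (strong induction on rank, with the ``exactly two completions'' statement of Lemma 4.1 doing the final work), the inductive step is executed by a genuinely different route. The paper never normalizes: it works directly in the $S_{\alpha,\beta}$ coordinates of the three parents, showing that the third parent must be $S_{\alpha+\rho\gamma,\beta+\rho\delta}$ for some unit $\rho$, and then that for each of the exactly two units $\sigma$ with $\sigma-\rho$ a unit, $S_{\alpha+\sigma\gamma,\beta+\sigma\delta}$ completes the tetrahedral arrangement; Lemma 4.1 then forces $S$ to be one of these two Ford spheres. You instead use Bezout in ${\Bbb Z}[\omega]$ to build a unimodular matrix, check via Proposition 3.2 that its Poincar\'e extension carries Ford spheres to Ford spheres, move one parent to the plane $S_{1,0}$, and carry out the unit enumeration in the standard picture $S_{1,0}, S_{\gamma_2,1}, S_{\gamma_3,1}$ with $\gamma_2-\gamma_3$ a unit; this buys a cleaner geometric count (the two apexes of an equilateral triangle with unit side) at the cost of verifying that unimodular ${\Bbb Z}[\omega]$-transformations preserve coprimality, a step the paper's in-place computation avoids. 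The same combinatorial fact -- exactly two units differing from a fixed unit by a unit -- underlies both arguments. Two small remarks: your sentence that the tangency conditions ``force $\gamma-\gamma_2$ and $\gamma-\gamma_3$ to be Eisenstein units'' literally gives only $|\gamma-\gamma_2|=|\gamma-\gamma_3|=1$; the integrality follows, as you in effect then argue, from exhibiting two Eisenstein-integer solutions and invoking the exactly-two count of Lemma 4.1, so there is no gap. And your concern about the boundary case $\beta=0$ is apt but applies equally to the paper's own proof: both arguments need the convention that $S_{u,0}$ ($u$ a unit) counts as a member of ${\cal P}_\omega$, since the plane genuinely occurs as a child in ${\cal G}_\omega$.
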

  
  \begin{proof}  Note that every element of ${\cal G}_{\omega}$ of rank 0 is in ${\cal P}_{\omega}$.   Suppose that every element of ${\cal G}_{\omega}$ of rank less than $n$ is in ${\cal P}_{\omega}$ and suppose $S\in {\cal G}_{\omega}$ has rank $n$.   Then its parents have smaller rank and are thus in ${\cal P}_{\omega}$.    We will show that $S\in {\cal P}_{\omega}$.   
  
  Let $U:=\{1, 1+\omega, \omega, -1, -1-\omega, -\omega\}$ the set of all six units in ${\Bbb Z}[\omega]$.  Given three mutually tangent spheres $S_{\alpha,\beta}, S_{\gamma,\delta}, S_{x,y}\in {\cal P}$, note that $|\alpha\delta-\beta\gamma|=1$.   Letting $w_1:=x-\alpha$ and $w_2:=y-\beta$,  the tangency of $S_{x,y}$ with the other two spheres implies $$\alpha w_2-\beta w_1\in U,  \delta w_1-\gamma w_2=0.$$
  Hence for some $\rho$,  $w_1=\rho\gamma$ and $w_2=\rho\delta$.    It follows that $\rho( \alpha\delta-\beta\gamma)$ is a unit and thus $\rho$ is too.  
  
  There are exactly two choices of $\sigma\in U$ such that $\sigma-\rho\in U$.    Note that for any such $\sigma$, $S_{\alpha+\sigma\gamma, \beta+\sigma\delta}$ together with the original three spheres forms a tetrahedral arrangement.   Hence $S= S_{\alpha+\sigma\gamma, \beta+\sigma\delta}$ for some unit $\sigma$ and therefore $S\in{\cal P}$.  \end{proof}

Consider now the equation
 $$ (a+b+c+d)^2=a^2+b^2+c^2+d^2.\eqno{(7)}$$
We define ${\Bbb Z}_{\perp}^4$ to be the set of relatively prime integer quadruples and we define a set of Ford spheres ``barycentrically":
$${\cal B}_{\omega}:=\{\langle a,b,c\rangle: (a,b,c,d)\in{\Bbb Z}_{\perp}^4,  (a+b+c+d)^2=a^2+b^2+c^2+d^2, a+b+c>0\}.$$
\begin{lemma} ${\cal P}_{\omega}\subset{\cal B}_{\omega}$.  \end{lemma}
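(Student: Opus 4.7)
\emph{Plan.} The strategy is to convert the data of a Ford sphere $S_{\alpha,\beta}$ directly into an integer quadruple $(a,b,c,d)$ lying in ${\cal B}_\omega$. Writing $\alpha = x + y\omega$ and $\beta = u + v\omega$ with $x,y,u,v\in{\Bbb Z}$, I would take $(a,b,c,d)$ to be the integer quadruple given by the explicit polynomial formulas in $x,y,u,v$ supplied by the Remark following Theorem 3.4. Integrality is then automatic, and the task reduces to three checks: that $\langle a,b,c\rangle = S_{\alpha,\beta}$ (with $a+b+c > 0$), that $(a,b,c,d)$ satisfies Diophantine equation (7), and that $\gcd(a,b,c,d) = 1$.

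The first two checks are essentially direct expansions. For the barycentric identity, one computes $a+b+c = u^2 - uv + v^2 = |\beta|^2 > 0$ and $\alpha\overline\beta = (b+c) + c\omega$; dividing the second by $|\beta|^2 = a+b+c$ recovers $\alpha/\beta = (b + c(1+\omega))/(a+b+c)$, which is exactly the tangent point of $\langle a,b,c\rangle$, and the two radii then agree automatically. For (7), I would instantiate the identity $Q((a,b,c,d),(A,B,C,D)) = |\alpha\delta - \beta\gamma|^2$ from the Remark at $(\gamma,\delta) = (\alpha,\beta)$: the right-hand side vanishes, so $Q((a,b,c,d),(a,b,c,d)) = (a+b+c+d)^2 - (a^2+b^2+c^2+d^2) = 0$, which is (7).

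The step I expect to be the real obstacle is the coprimality of $(a,b,c,d)$, because the defining formulas are rather opaque as polynomials in $x,y,u,v$ and an ad hoc prime-by-prime analysis in ${\Bbb Z}[\omega]$ looks painful. The cleanest route is again through the bilinear form $Q$. Since ${\Bbb Z}[\omega]$ is a UFD, $\alpha\perp\beta$ allows us to pick $\gamma,\delta\in{\Bbb Z}[\omega]$ with $\alpha\delta - \beta\gamma$ a unit, hence of norm $1$. Feeding $\gamma,\delta$ through the same construction produces an integer quadruple $(A,B,C,D)$, and the Remark then yields
\[
(a+b+c+d)(A+B+C+D) - (aA+bB+cC+dD) \;=\; Q((a,b,c,d),(A,B,C,D)) \;=\; 1.
\]
Any integer dividing each of $a,b,c,d$ must divide this expression, and hence divides $1$. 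So $\gcd(a,b,c,d)=1$, and $\langle a,b,c\rangle\in{\cal B}_\omega$, as required.
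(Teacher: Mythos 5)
Your proposal is correct and follows essentially the same route as the paper: the same explicit quadruple $(a,b,c,d)$ in terms of $x,y,u,v$, the same identification $S_{\alpha,\beta}=\langle a,b,c\rangle$ via $a+b+c=|\beta|^2$, and the same coprimality argument choosing $\gamma,\delta$ with $|\alpha\delta-\beta\gamma|=1$ so that the bilinear form equals $1$. The only cosmetic difference is that you obtain equation (7) and the value $Q=1$ from the Remark's identity $Q=|\alpha\delta-\beta\gamma|^2$, where the paper verifies (7) directly and invokes Theorem 3.4 through tangency.
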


\begin{proof}  
Let $S_{\alpha,\beta}\in{\cal P}_{\omega}$.   Then $\alpha\perp\beta$ where $\alpha=x+y\omega$ and $\beta=u+v\omega$ for some  $x,y,u,v\in{\Bbb Z}$.  Define
$$\begin{aligned}
a&=u^2+v^2-uv+xv-xu-yv,\\
b&=xu-yu+yv,\\
c&=yu-xv,\\
d&=x^2+y^2-xy+xv-xu-yv.
\end{aligned}$$
It is easy to verify 
$$(a+b+c+d)^2=a^2+b^2+c^2+d^2.$$

Then
$$\frac{\alpha}{\beta}=\frac{b+c+c\omega}{a+b+c}, \hskip .2 in \frac1{2|\beta|^2}=\frac1{2(a+b+c)},$$
and so $S_{\alpha,\beta}=\langle a,b,c\rangle$.  

Since  ${\Bbb Z}[\omega]$ is a principal ideal domain and $\alpha\perp\beta$ in it, there exist $\gamma,\delta\in{\Bbb Z}[\omega]$
 such that $|\alpha\delta-\beta\gamma|=1$ and so $S_{\alpha,\beta}||S_{\gamma,\delta}$.   
 
 As above, for some integer quadruple $(A,B,C,D)$ satisfying (7), $S_{\gamma,\delta}=\langle A,B,C \rangle.$
 Hence $\langle a,b,c \rangle||\langle A,B,C \rangle$.   By Theorem 3.4,
 $$(a+b+c+d)(A+B+C+D)-(aA+bB+cC+dD)=1,$$
 and thus $a,b,c,d$ are relatively prime.
 That is, $\langle a,b,c \rangle\in{\cal B}_{\omega}$ and the lemma is shown.\end{proof}





\begin{theorem}  ${\cal B}_{\omega}={\cal P}_{\omega}={\cal G}_{\omega}$. \end{theorem}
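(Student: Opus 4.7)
The plan is to close the chain of inclusions ${\cal G}_{\omega}\subseteq{\cal P}_{\omega}\subseteq{\cal B}_{\omega}$ already given by Lemmas 4.2 and 4.3 by proving the remaining inclusion ${\cal B}_{\omega}\subseteq{\cal G}_{\omega}$. The argument is by induction on $N:=a+b+c$, which is positive by definition of ${\cal B}_\omega$ and equals $1/(2r)$ for $r$ the radius of $\langle a,b,c\rangle$, so smaller $N$ means larger spheres.

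For the base case $N=1$, the sphere has radius $1/2$ and tangent point $(b+c)+c\omega\in{\Bbb Z}[\omega]$, so $\langle a,b,c\rangle=S_{\gamma,1}$ for some Eisenstein integer $\gamma$. To show every such $S_{\gamma,1}$ lies in ${\cal G}_{\omega}$, note that the plane $S_{1,0}$ is the ``upper'' tetrahedral completion of the initial triple ${\cal S}_0$ (hence in ${\cal G}_{\omega}$) and is tangent to every $S_{\gamma,1}$ since $|1\cdot 1-0\cdot\gamma|=1$. Whenever $S_{\gamma_1,1},S_{\gamma_2,1}\in{\cal G}_{\omega}$ with $|\gamma_1-\gamma_2|=1$, the triple $\{S_{\gamma_1,1},S_{\gamma_2,1},S_{1,0}\}$ is mutually tangent and its two tetrahedral completions are $S_{\gamma,1}$, where $\gamma$ ranges over the two Eisenstein integers at unit distance from each of $\gamma_1,\gamma_2$ (the third vertices of the two unit equilateral triangles on the edge $\gamma_1\gamma_2$). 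Iterating these reflections across the edges of the triangular lattice starting from $\{0,1,1+\omega\}$ reaches every $\gamma\in{\Bbb Z}[\omega]$.

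For the inductive step $N>1$, I aim to exhibit three mutually $Q$-tangent integer solutions $\mathbf{u}_i\in{\Bbb Z}_{\perp}^{4}$ of (7), each $Q$-tangent to $\mathbf{u}:=(a,b,c,d)$, and each with $a_i+b_i+c_i<N$. Theorem 3.4 and Proposition 3.3 then make $\langle a,b,c\rangle$ one of the two tetrahedral completions of the three spheres $\langle\mathbf{u}_i\rangle$, which lie in ${\cal G}_{\omega}$ by the inductive hypothesis; hence $\langle a,b,c\rangle\in{\cal G}_{\omega}$.

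The main obstacle is producing these three smaller parents. My intended tool is Vieta jumping on the form $Q$: for any $Q$-tangent tetrahedron $(\mathbf{u},\mathbf{v}_1,\mathbf{v}_2,\mathbf{v}_3)$ of integer solutions of (7), bilinearity forces the other tetrahedral completion of $(\mathbf{v}_1,\mathbf{v}_2,\mathbf{v}_3)$ to be $\mathbf{u}':=\mathbf{v}_1+\mathbf{v}_2+\mathbf{v}_3-\mathbf{u}$, which is again an integer solution of (7) satisfying $Q(\mathbf{u}',\mathbf{v}_j)=1$; the latter identity forces $\gcd(\mathbf{u}')=1$, so rel-primality is preserved. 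Summing the first three coordinates yields the Descartes-like curvature relation $N+N'=N_1+N_2+N_3$. The crux is then a combinatorial-geometric argument showing that every $\mathbf{u}\in{\cal B}_{\omega}$ with $N>1$ lies in at least one tetrahedron of solutions of (7) in which $\mathbf{u}$ has strictly the largest $N$, so that its three companions all give valid parents in ${\cal B}_{\omega}$; iterating the Vieta jump from $\mathbf{u}$ to such a parent then descends $N$ strictly, terminating at the base case $N=1$.
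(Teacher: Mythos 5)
The overall strategy is the right one (close the circle by proving ${\cal B}_{\omega}\subseteq{\cal G}_{\omega}$, using Lemmas 4.2 and 4.3 for the other inclusions, and use Theorem 3.4 plus Proposition 3.3 to turn integer "parents" into a tetrahedral completion), and your base case and the Vieta-jump bookkeeping ($\mathbf{u}'=\mathbf{v}_1+\mathbf{v}_2+\mathbf{v}_3-\mathbf{u}$ solves (7), $Q(\mathbf{u}',\mathbf{v}_j)=1$ forces primitivity, $N+N'=N_1+N_2+N_3$) are fine. But the proof is not complete: the step you yourself call the crux — that every $\mathbf{u}\in{\cal B}_{\omega}$ with $N>1$ sits in at least one tetrahedron of integer solutions of (7) in which $\mathbf{u}$ has strictly the largest $N$ — is asserted as "a combinatorial-geometric argument" and never supplied. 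Nothing you have written even establishes that a given $\mathbf{u}$ lies in \emph{some} integer tetrahedron of solutions of (7) (existence of one tangent integer solution, let alone three mutually tangent ones with smaller $N$), and there is the further wrinkle that a companion may have $a_i+b_i+c_i=0$ (the plane), which is outside ${\cal B}_{\omega}$ and outside your induction hypothesis. Without this reduction statement the induction on $N$ never gets off the ground, so as it stands the argument is a plan rather than a proof.

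This missing reduction is exactly what the paper's proof supplies, by a different and more explicit device: the generalized slow Euclidean algorithm. Its four moves (e.g.\ $(a,b,c,d)\mapsto(a+d,b+d,c+d,-d)$ when $d$ is minimal) preserve the solution set of (7), preserve $\gcd$, preserve tangency, and strictly decrease $a+b+c+d$ while keeping it nonnegative, so every primitive solution terminates at a standard basis vector; reversing the word of moves on the other three basis vectors manufactures three tangent parents of strictly smaller rank, and the induction is on that rank rather than on $N=a+b+c$. If you want to salvage your version, you should either prove your "dominant tetrahedron" claim directly (a reduction theory in the style of root quadruples for Descartes quadruples, which is genuinely nontrivial), or replace it with an explicit descent algorithm of gSEA type; note also that your single Vieta reflection $\mathbf{u}\mapsto\mathbf{v}_1+\mathbf{v}_2+\mathbf{v}_3-\mathbf{u}$ is not the same move as the paper's coordinate reflections, so its termination needs its own argument.
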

\begin{proof}  It is enough to show ${\cal B}_{\omega}\subset{\cal G}_{\omega}$.  
Consider the ``generalized slow Euclidean algorithm" (`gSEA'):
$$(a,b,c,d)\longmapsto \begin{cases} (-a,a+b,a+c,a+d) &\text{if $a=\min\{a,b,c,d\}$,}\\
(a+b,-b,b+c,b+d) &\text{if $b=\min\{a,b,c,d\}<a$,}\\
(a+c,b+c,-c,c+d) &\text{if $c=\min\{a,b,c,d\}<a,b$,}\\
(a+d,b+d,c+d,-d) &\text{if $d=\min\{a,b,c,d\}<a,b,c$}.\end{cases}$$
First, notice every solution of  equation (7) is invariant under
the transformation $(a,b,c,d)\mapsto(a+d,b+d,c+d,-d)$.  Also, if
$d=\min\{a,b,c,d\}$ then $(a+d)+(b+d)+(c+d)+(-d)=a+b+c+2d<a+b+c+d$
and, in general, $a+b+c+d$ strictly decreases with every step of
the gSEA. It cannot go below 0 however since $a+b+c\ge a+b+c+d\ge
0$,
$$a+b+c+2d=a+b+c-2(ab+ac+bc)/(a+b+c)=(a^2+b^2+c^2)/(a+b+c)\ge 0.$$
Therefore, if $a+b+c+d>0$, then the gSEA eventually terminates
with $a,b,c,d\ge 0$.   If, say $a,b>0$ at that stage, then
$0=ab+ac+ad+bc+bd+cd\ge ab>0$ -- a contradiction;  therefore, at
most one of $a,b,c,d$ is non-zero.   Since the gcd is preserved by
the gSEA, the gSEA eventually terminates at one of
$(g,0,0,0),(0,g,0,0),(0,0,g,0),(0,0,0,g)$ where
$g:=\gcd(a,b,c,d)$.
Every ${\bf u}\in {\cal U}$ must end in a basis vector $(1,0,0,0), (0,1,0,0), (0,0,1,0)$ or $(0,0,0,1)$ and reversing
the gSEA in each of the other three basis vectors gives rise to three new 
parents since tangency is preserved by the gSEA.  This is the ``parent algorithm".

Given a relatively prime integer solution of (7), let its \emph{rank} be the number of steps taken by the gSEA.   
For example,
$$\begin{aligned}(12,12,3,-8)&\mapsto(4,4,-5,8)\mapsto(-1,-1,5,3)\mapsto(1,-2,4,2)\\&
\mapsto(-1,2,2,0)\mapsto(1,1,1,-1)\mapsto(0,0,0,1).\end{aligned}$$
We code the steps:  $[4,3,1,2,1,4]$ and the rank of $(12,12,3,-8)$ is 6.  Reversing these (and noting that each step is idempotent), and applying to the standard basis vectors, we get:
$$\begin{aligned}
&(0,0,1,0) \mapsto (0,0,1,0) \mapsto ... \mapsto (2,2,0,-1)\\
&(0,1,0,0) \mapsto (0,1,0,0) \mapsto ... \mapsto (5,6,2,-4)\\
&(1,0,0,0) \mapsto (1,0,0,0) \mapsto ... \mapsto (6,5,2,-4).\end{aligned}$$
Note that the first step always leaves the vector fixed and so the parents $(2,2,0,-1)$,  $(5,6,2,-4)$, $(6,5,2,-4)$ of $(12,12,3, -8)$ have smaller rank.   

Note that every element of ${\cal B}_{\omega}$ of rank 0 is in ${\cal G}_{\omega}$. 
Suppose that every element of ${\cal B}_{\omega}$ of rank less than $n$ is in ${\cal G}_{\omega}$ and suppose $S\in {\cal B}_{\omega}$ has rank $n$.   Then its parents have smaller rank and are thus in ${\cal G}_{\omega}$.    It is easy to check that  $S\in {\cal G}_{\omega}$.     By induction on rank, the theorem is shown.  \end{proof}

Three corollaries immediately follow.

\begin{corollary}  The integer solutions of $$(a+b+c+d)^2=a^2+b^2+c^2+d^2$$ are parameterized by 
$$\begin{aligned}
a&=u^2+v^2-uv+xv-xu-yv,\\
b&=xu-yu+yv,\\
c&=yu-xv,\\
d&=x^2+y^2-xy+xv-xu-yv.
\end{aligned}$$\end{corollary}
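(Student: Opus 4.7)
The plan is to prove the two directions separately: (i) for any integers $x,y,u,v$ the formulas produce an integer solution of Equation (7), and (ii) every relatively prime integer solution with $a+b+c>0$ arises in this way. (The remaining solutions can then be recovered through the symmetries of (7).)

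For (i), set $\alpha := x+y\omega$ and $\beta := u+v\omega$. The computation already performed in the proof of Lemma 4.3 establishes the identity $S_{\alpha,\beta} = \langle a,b,c\rangle$ with $a+b+c = |\beta|^2$, where $(a,b,c,d)$ is the quadruple produced by the stated formulas. Since the right hand side is a valid barycentric representation of a normal sphere, the quadruple $(a,b,c,d)$ automatically lies in $\mathcal{U}$, which is precisely Equation (7). (Alternatively, one may verify directly that $(a+b+c+d)^2-(a^2+b^2+c^2+d^2)$ is the zero polynomial in $x,y,u,v$.)

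For (ii), let $(a,b,c,d)$ be a relatively prime integer solution of (7) with $a+b+c>0$. Then $\langle a,b,c\rangle \in \mathcal{B}_\omega$, and by Theorem 4.4 also $\langle a,b,c\rangle \in \mathcal{P}_\omega$. Hence $\langle a,b,c\rangle = S_{\alpha,\beta}$ for some relatively prime $\alpha,\beta\in\mathbb{Z}[\omega]$; writing $\alpha = x+y\omega$ and $\beta = u+v\omega$, the expansion of $S_{\alpha,\beta}$ into barycentric coordinates that appears in the proof of Lemma 4.3 is exactly the formulas stated in the corollary for $a,b,c$. The value of $d$ is then determined by (7), and by part (i) agrees with the given polynomial expression.

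The main obstacle is the symmetry mismatch: Equation (7) is fully symmetric under permutations and joint sign-changes of $(a,b,c,d)$, whereas the formulas single out the variable $d$ (the one of degree two in $(x,y)$ alone, paired with $a$ of degree two in $(u,v)$). Consequently some bookkeeping is needed to handle solutions with $a+b+c \leq 0$ or with a nontrivial common factor: one permutes coordinates to make the distinguished partial sum $|\beta|^2 = a+b+c$ positive, and absorbs common factors into $(\alpha,\beta)$ after checking how rescaling $(x,y,u,v)$ acts through the bilinear structure. Once this is done, the corollary is essentially a restatement of Theorem 4.4 in algebraic language.
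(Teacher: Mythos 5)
Your argument is essentially the paper's own: Corollary 4.5 is stated there as an immediate consequence of Lemma 4.3 (where exactly these polynomial formulas for $a,b,c,d$ are introduced and checked to satisfy equation (7)) together with Theorem 4.4 (${\cal B}_{\omega}={\cal P}_{\omega}$), which is precisely your two directions, with uniqueness of the barycentric representation supplying the identification of coordinates. The only caveat is that the literal statement must be read up to the natural normalization (relatively prime solutions, permuted and signed so that $a+b+c>0$), since $a+b+c$ is forced to be an Eisenstein norm; this is exactly the scope of the paper's implicit derivation as well, so your proof matches it.
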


The following Corollary solves a recent Monthly problem \cite{Nprob}
\begin{corollary}   If $a,b,c,d$ are relatively prime and satisfy $(a+b+c+d)^2=a^2+b^2+c^2+d^2$, then $|a+b+c|=m^2+mn+n^2$ for some integers $m,n$. \end{corollary}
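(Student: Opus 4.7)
The plan is to read the corollary off of Theorem 4.4 by identifying the barycentric sphere $\langle a,b,c\rangle$ with its Eisenstein representative $S_{\alpha,\beta}$ and then matching radii. First, I would normalize the sign of $a+b+c$: since equation (7) and the relative primality of $(a,b,c,d)$ are both preserved under negating all four coordinates, I may assume $a+b+c \ge 0$. The case $a+b+c = 0$ is trivial (take $m=n=0$), so assume $a+b+c > 0$.

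With this normalization, the hypotheses exactly say $(a,b,c,d) \in {\Bbb Z}_{\perp}^4$, $(a+b+c+d)^2 = a^2+b^2+c^2+d^2$, and $a+b+c > 0$, so $\langle a,b,c\rangle \in {\cal B}_{\omega}$. By Theorem 4.4, $\langle a,b,c\rangle \in {\cal P}_{\omega}$, hence there exist Eisenstein integers $\alpha,\beta$ with $\alpha \perp \beta$ and $\langle a,b,c\rangle = S_{\alpha,\beta}$. Comparing the radius $1/(2(a+b+c))$ of the barycentric form with the radius $1/(2|\beta|^2)$ of the Eisenstein form forces $a+b+c = |\beta|^2$.

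Finally, I would unpack the Eisenstein norm. Writing $\beta = m - n\omega$ with $m,n \in {\Bbb Z}$ and using $\omega + \overline{\omega} = -1$ and $\omega\overline{\omega} = 1$, I compute
$$|\beta|^2 = (m - n\omega)(m - n\overline{\omega}) = m^2 - mn(\omega + \overline{\omega}) + n^2 = m^2 + mn + n^2,$$
which gives $|a+b+c| = m^2 + mn + n^2$ as required. There is no genuine obstacle here; the only subtlety is the sign convention in parameterizing $\beta$, since the usual Eisenstein norm of $m + n\omega$ is $m^2 - mn + n^2$. Writing $\beta = m - n\omega$ instead (equivalently, sending $n \mapsto -n$) converts the standard norm to the claimed form $m^2 + mn + n^2$, so the content of the corollary is entirely that $|a+b+c|$ is the norm of some Eisenstein integer, and Theorem 4.4 supplies exactly this via the radius comparison.
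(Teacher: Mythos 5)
Your proposal is correct and follows essentially the same route as the paper: invoke Theorem 4.4 to identify $\langle a,b,c\rangle$ with some $S_{\alpha,\beta}$ for Eisenstein integers $\alpha\perp\beta$, then equate the radii $1/2(a+b+c)$ and $1/2|\beta|^2$ to get $|a+b+c|=|\beta|^2=m^2+mn+n^2$. Your extra care with the sign normalization, the trivial case $a+b+c=0$, and the substitution $\beta=m-n\omega$ reconciling the norm form $m^2-mn+n^2$ with the stated $m^2+mn+n^2$ just makes explicit what the paper's ``without loss of generality'' leaves implicit.
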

\begin{proof} Without loss of generality, assume $a+b+c>0$.  By hypothesis,  $\langle a,b,c \rangle$ is a Ford sphere with radius $1/2(a+b+c)$.   By Theorem 4.4 , this sphere is also of the form 
$S_{\alpha, \beta}$ with radius $1/2|\beta|^2$ for some Eisenstein integers $\alpha, \beta$.   Hence $|a+b+c|=|\beta|^2=m^2+mn+n^2$ for some integers $m,n$. \end{proof}

Theorems 3.3 and 4.4 provide a useful way to construct Ford spheres recursively.

\begin{corollary}{\bf (Tetrahedral Rule).} For ${\bf a}, {\bf b}, {\bf c}, {\bf d}\in{\cal U}$,  if $\langle{\bf a}\rangle, \langle{\bf b}\rangle, \langle{\bf c}\rangle,\langle{\bf d}\rangle$ are mutually tangent, then so are $\langle{\bf a}\rangle, \langle{\bf b}\rangle, \langle{\bf c}\rangle,\langle{\bf a}+{\bf b}+{\bf c}-{\bf d}\rangle$.  \end{corollary}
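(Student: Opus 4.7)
The plan is to reduce the statement to a pair of simple bilinear-algebra computations using the quadratic form $Q$ of Theorem 3.4. The form $Q$ is symmetric and bilinear, and by Theorem 3.4 the hypothesis that $\langle\mathbf{a}\rangle,\langle\mathbf{b}\rangle,\langle\mathbf{c}\rangle,\langle\mathbf{d}\rangle$ are mutually tangent (and each of $\mathbf{a},\mathbf{b},\mathbf{c},\mathbf{d}\in\mathcal U$) is exactly the statement that
\[
Q(\mathbf{x},\mathbf{x})=0\text{ for }\mathbf{x}\in\{\mathbf{a},\mathbf{b},\mathbf{c},\mathbf{d}\},\qquad Q(\mathbf{x},\mathbf{y})=1\text{ for each distinct pair}.
\]
So $Q$ acts on these four vectors exactly like the Gram form of four points in a ``standard simplex'' configuration, and the whole proof is the verification that the reflection $\mathbf{e}:=\mathbf{a}+\mathbf{b}+\mathbf{c}-\mathbf{d}$ has the analogous Gram data with the three remaining vectors.

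First I would show $\mathbf{e}\in\mathcal U$ by expanding $Q(\mathbf{e},\mathbf{e})$ via bilinearity. The four diagonal terms $Q(\mathbf{a},\mathbf{a}),Q(\mathbf{b},\mathbf{b}),Q(\mathbf{c},\mathbf{c}),Q(\mathbf{d},\mathbf{d})$ all vanish, and the six cross terms split as three with coefficient $+2$ (the pairs $\mathbf{a}\mathbf{b},\mathbf{a}\mathbf{c},\mathbf{b}\mathbf{c}$) and three with coefficient $-2$ (the pairs $\mathbf{a}\mathbf{d},\mathbf{b}\mathbf{d},\mathbf{c}\mathbf{d}$), each of these cross values equal to $1$. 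They cancel, so $Q(\mathbf{e},\mathbf{e})=0$, which is exactly the Diophantine equation (7) defining $\mathcal U$.

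Next I would verify tangency of $\langle\mathbf{e}\rangle$ with each of $\langle\mathbf{a}\rangle,\langle\mathbf{b}\rangle,\langle\mathbf{c}\rangle$. By bilinearity,
\[
Q(\mathbf{e},\mathbf{a})=Q(\mathbf{a},\mathbf{a})+Q(\mathbf{b},\mathbf{a})+Q(\mathbf{c},\mathbf{a})-Q(\mathbf{d},\mathbf{a})=0+1+1-1=1,
\]
and analogously $Q(\mathbf{e},\mathbf{b})=Q(\mathbf{e},\mathbf{c})=1$. By Theorem 3.4 each of these tangencies holds, and combined with the pairwise tangencies of $\langle\mathbf{a}\rangle,\langle\mathbf{b}\rangle,\langle\mathbf{c}\rangle$ already assumed, the four spheres $\langle\mathbf{a}\rangle,\langle\mathbf{b}\rangle,\langle\mathbf{c}\rangle,\langle\mathbf{e}\rangle$ are mutually tangent.

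The one technicality I expect to be the sticky point is the positivity condition $a+b+c>0$ built into $\mathcal U$: in general the sum of the first three coordinates of $\mathbf{e}$ might vanish or be negative, in which case $\langle\mathbf{e}\rangle$ degenerates to a plane parallel to $\mathbb C$ (the ``sphere tangent at $\infty$'' of Lemma 4.1) or must be reinterpreted by a sign change. The computations above are insensitive to this, however, since $Q$ is a polynomial identity, and the geometric content matches exactly the two-choice dichotomy of Lemma 4.1: of the two tetrahedral completions of $\langle\mathbf{a}\rangle,\langle\mathbf{b}\rangle,\langle\mathbf{c}\rangle$ one is $\langle\mathbf{d}\rangle$ and the other is $\langle\mathbf{e}\rangle$. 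The Descartes-style ``reflection'' $\mathbf{d}\mapsto\mathbf{a}+\mathbf{b}+\mathbf{c}-\mathbf{d}$ therefore implements precisely the swap between the two members of this pair.
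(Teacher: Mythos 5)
Your argument is correct and is essentially the paper's own proof: the paper verifies tangency by the same one-line bilinearity computation $Q({\bf a},{\bf a}+{\bf b}+{\bf c}-{\bf d})=0+1+1-1=1$, with your extra check that $Q({\bf e},{\bf e})=0$ (and your remark on the positivity/plane-at-infinity case) being a harmless elaboration of what the paper leaves implicit.
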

\begin{proof}  $Q({\bf a},{\bf a}+{\bf b}+{\bf c}-{\bf d})=Q({\bf a},{\bf a})+Q({\bf a},{\bf b})+Q({\bf a},{\bf c})-Q({\bf a},{\bf d})=0+1+1-1=1.$ \end{proof}

The reason we call it the ``tetrahedral rule" should be clear from Figure 3 which represents several spheres as triples of integers obeying the tetrahedral rule.   It is clear that this diagram indicates a tesselation of (part of) ${\Bbb R}^3$ by tetrahedra such that every edge is shared by exactly six tetrahedra.   The group of symmetries here is then a reflection group with four generators (Coxeter diagram a tetrahedron with each edge weight three ($(ab)^3=e$, etc.).    This group is represented as a group of matrices $M_k$ defined by
$$(M_k)_{ij}=\delta_{ij}+\delta_{ik}-3\delta_{ik}\delta_{jk}.$$  This group seems related to the Eisenstein-Picard modular group \cite{FP}.

\begin{figure}[htbp] 
   \centering
   \includegraphics[ width=2 in]{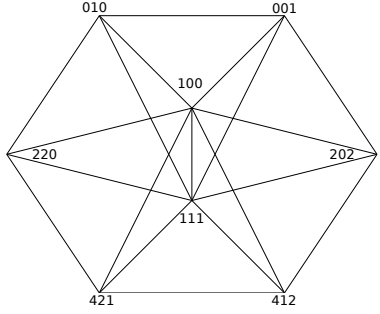} 
   \caption{Tetrahedral rule.}
   \label{fig. 2}
\end{figure}


An interesting side question is ``when does the gSEA eventually
repeat?". The SEA of Section 2, starting with $(1,x)$ eventually repeats if and only if 
the continued fraction for $x$ eventually repeats (via the bijection $(a,b)\leftrightarrow b/a$).  

It
turns out that this is contained in the gSEA case. Note that for
any $x\in{\Bbb R}$, $(1,x,x^2,-x)$ is a solution of equation (7). We
shall show that the gSEA applied to this vector eventually repeats
if and only if $x$ is a quadratic surd.  To help, we say that two
vectors $A,B$ are equivalent ($A\equiv B$) if a permutation of one
is a scalar multiple of the other (e.g.
$(4,1,3,2)\equiv(2,4,6,8)$).

Suppose first that $x>1$.  Then
$$(1,x,x^2,-x)\mapsto(1-x,0,x^2-x,x)\mapsto(x-1,1-x,(x-1)^2,1)\equiv(1,y,y^2,-y)$$
where $y=x-1$.

On the other hand, if $0<x<1$, then
$$\begin{aligned} (1,x,x^2,-x)&\mapsto((1-x,0,x^2-x,x)\\&\mapsto((1-x)^2,-x(1-x),x(1-x),x^2)\equiv(1,y,y^2,-y)\end{aligned}$$
where $y=x/(1-x)$.  Hence the double-stepped gSEA is, modulo
$\equiv$, equivalent to iteration of
$$x\longmapsto f(x):=\begin{cases} x-1 &\text{if $x>1$,}\\
\frac x{1-x} &\text{if $0<x<1$,}\end{cases}$$ This is closely
related to continued fractions: if $x$ has continued fraction
expansion $[a_0,a_1,a_2,a_3,...]$, then it is easy to verify that
$$\begin{aligned} f_{a_0}(x)&=1/[a_1,a_2,a_3,...]\\
f_{a_0+a_1}(x)&=[a_2,a_3,a_4,...]\\
f_{a_0+a_1+a_2}(x)&=1/[a_3,a_4,a_5,...]\\
\text{ etc. }\end{aligned}$$ Hence the gSEA beginning with
$(1,x,x^2,-x)$ eventually repeats if and only if $x$ is a
quadratic surd.


\section{Ford spheres:  octahedral case}

\begin{figure}[h] 
   \centering
   \includegraphics[ width=2 in]{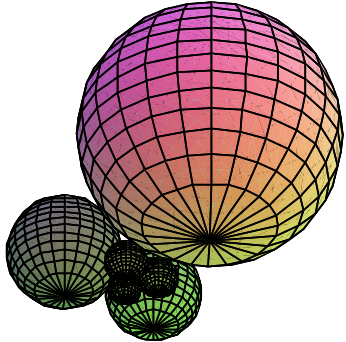} 
   \caption{Octahedral Spherical Array}
   \label{fig. 7}
\end{figure}

In Section 4, we introduced an array ${\cal P}_{\omega}$ of spheres parameterized by the Eisenstein rationals ${\Bbb Q}(\omega)$,  a field whose ring of integers is a Euclidean domain.   In this Section, we introduce an array ${\cal P}_i$ of normal spheres parameterized by the Gaussian rationals ${\Bbb Q}(i)$,  another field whose ring of integers is a Euclidean domain.   In Section 4, we gave a recursive geometric procedure for constructing ${\cal P}_{\omega}$ based on repeatedly, for any three mutually tangent spheres,  adding another so that all four have contact graph a tetrahedron.    In this Section, we shall give a recursive geometric procedure for constructing ${\cal P}_i$ that repeatedly, for any three mutually tangent spheres,  adds three others, so that all six have contact graph an octahedron.   Figure 4 shows three mutually tangent spheres together with their three children so as to form an octahedral array.  In Section 4, we also gave a barycentric parameterization of the spheres in ${\cal P}_{\omega}$ by expressing them in the form $\langle a,b,c\rangle$ for each relatively prime integer solution $(a,b,c,d)$ of $(a+b+c+d)^2=a^2+b^2+c^2+d^2$.  By replacing this equation by 
$$(a+b+c+d)^2=2(a^2+b^2+c^2+d^2),\eqno{(8)}$$
we get a new array of spheres, which under the Poincar\'e extension $M$ of a certain M\"obius transformation, is the same as ${\cal P}_i$.

We shall first define ${\cal P}_i$ and, by using the (slow) Euclidean algorithm for Gaussian integers, show that it is maximal in the sense that no normal sphere can be included or enlarged.     Next we show that octahedral arrays of spheres exist and that there are exactly two such arrays containing a given triple of mutually tangent spheres.   This allows for a way of defining a set of spheres ${\cal G}_i$ geometrically in terms of octahedra just as we did earlier for tetrahedra.    We then show that ${\cal P}_i\subset {\cal G}_i$ (and so they are equal by the maximality of ${\cal P}_i$).     Next we define a set of spheres ${\cal B}_i$, parameterized by solutions of (8), and show that ${\cal P}_i\subset M({\cal B}_i)$ where $M$ is the Poincar\'e extension of a certain M\"obius transformation.    We may thus conclude that ${\cal P}_i={\cal G}_i=M({\cal B}_i)$.   

These spheres,  parameterized as in the definition of ${\cal P}_i$, have been studied previously by Pickover \cite{Pick}.  

 \vskip .1 in

It is well known that the set of Gaussian integers ${\Bbb Z}[i]:=\{a+ib:  a,b\in{\Bbb Z}\}$ is a Euclidean ring (see \cite{IR}).  Its norm is $N(a+ib)=|a+ib|^2=a^2+b^2$.     As ${\Bbb Z}[i]$ is a UFD containing ${\Bbb Z}$,  not every prime in ${\Bbb Z}$ is prime in ${\Bbb Z}[i]$.     The \emph{Gaussian rationals}, are the members of the field
$${\Bbb Q}(i):=\{r+is:  r,s\in {\Bbb Q}\}=\{\alpha/\beta:  \alpha,\beta\in{\Bbb Z}[i], \beta\neq 0\}.$$

${\Bbb Z}[i]$ has four units: $1,-1,i$, and $-i$.  We say that Gaussian integers $\alpha,\beta$ are relatively prime (we write $\alpha\perp\beta$) if the only Gaussian integers dividing both $\alpha$ and $\beta$ are units.    Since ${\Bbb Z}[i]$ is a unique factorization domain (UFD), $\alpha\perp\beta$ if and only if  there exist two Gaussian integers $x,y$ such that $\alpha x+\beta y$ is a unit.  

Recall that for $\alpha,\beta\in{\Bbb C}$, 
$$S_{\alpha,\beta}:=S\left(\frac {\alpha}{\beta}, \frac 1{2|\beta|^2}\right).$$
For this Section, we define the set of Ford spheres as
$${\cal P}_i:=\{S_{\alpha,\beta}:  \alpha,\beta\in{\Bbb Z}[i],  \alpha\perp\beta\}.$$

For any $S_{\alpha,\beta}\in {\cal P}_i$,  $\alpha\perp\beta$ and thus there exist Gaussian integers $\gamma, \delta$ such that $|\alpha\delta-\beta\gamma|= 1$.   Hence $S_{\alpha,\beta}||S_{\gamma+\rho\gamma, \delta+\rho\delta}$ for any $\rho\in{\Bbb Z}_i$.   For any two spheres $S_{\alpha,\beta}, S_{\gamma, \delta}\in {\cal P}_i$,  since $\alpha\delta-\beta\gamma$ is a Gaussian integer,  $|\alpha\delta-\beta\gamma|\ge 1$ and so $S_{\alpha,\beta}$ and  $S_{\gamma, \delta}$ have disjoint interiors.  Hence a given Ford sphere is tangent to infinitely many others but does not intersect the interior of any other.

\begin{lemma}  For all $z\in{\Bbb C}$, 
$$\inf\{|\beta z-\alpha|: \alpha,\beta\in{\Bbb Z}[i]\}=0.$$ \end{lemma}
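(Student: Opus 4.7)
The plan is to prove this by a two-dimensional pigeonhole argument, directly analogous to Dirichlet's approximation theorem on the real line. The key observation is that $\mathbb{Z}[i]$ is a lattice in $\mathbb{C}$ with fundamental domain the unit square $[0,1)^2$, so we can reduce any complex number mod $\mathbb{Z}[i]$ and land in that square of area $1$.

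Concretely, I would fix a positive integer $N$ and consider the $(N+1)^2$ Gaussian integers
\[
\beta \in B_N := \{a+bi : 0 \le a, b \le N, \ a,b \in \mathbb{Z}\}.
\]
For each such $\beta$, write $\beta z = \alpha_\beta + r_\beta$ with $\alpha_\beta \in \mathbb{Z}[i]$ and $r_\beta \in [0,1)^2$. Partition $[0,1)^2$ into $N^2$ closed sub-squares of side length $1/N$. Since $(N+1)^2 > N^2$, the pigeonhole principle yields distinct $\beta_1, \beta_2 \in B_N$ with $r_{\beta_1}, r_{\beta_2}$ in a common sub-square, so that $|r_{\beta_1} - r_{\beta_2}| \le \sqrt{2}/N$.

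Setting $\beta := \beta_1 - \beta_2 \in \mathbb{Z}[i] \setminus \{0\}$ and $\alpha := \alpha_{\beta_1} - \alpha_{\beta_2} \in \mathbb{Z}[i]$, we get $|\beta z - \alpha| \le \sqrt{2}/N$. Letting $N \to \infty$ forces the infimum to be $0$. (If $z$ happens to be a Gaussian rational $\gamma/\delta$, then simply taking $\beta = \delta, \alpha = \gamma$ already realizes the infimum as $0$, so the statement is trivial in that case.)

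There really is no serious obstacle here: the argument is a direct transcription of the classical Dirichlet pigeonhole from $\mathbb{R}$ to $\mathbb{C}$, made possible by the fact that $\mathbb{Z}[i]$ is a full-rank $\mathbb{Z}$-lattice in $\mathbb{C}$. The only care needed is to verify the count $(N+1)^2 > N^2$ and that the diameter of a sub-square of side $1/N$ is $\sqrt{2}/N$, both of which are immediate.
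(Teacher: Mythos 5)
Your proposal is correct and follows essentially the same route as the paper's own proof: reduce $\beta z$ modulo ${\Bbb Z}[i]$ into the unit square, partition it into $N^2$ sub-squares, and apply the pigeonhole principle to more than $N^2$ multiples to produce $\beta\neq 0$ and $\alpha$ with $|\beta z-\alpha|\le\sqrt{2}/N$, treating Gaussian rationals trivially. No substantive difference to report.
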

\begin{proof}    We may define a ``floor function" for ${\Bbb C}$:  
$$\lfloor x+iy\rfloor:=\lfloor x \rfloor+ i\lfloor y \rfloor$$
and a corresponding ``fractional part" function
$$\{x+iy\}:=\{x\}+i\{y\}.$$
Given positive integer $N$, divide the square $R:=[0,1)\times [0,1)$ into $N^2$ disjoint congruent squares.  
For  $z\not\in{\Bbb Q}(i)$, the numbers in $\{(m+in)z: m,n=1,..., N+1\}$ are all distinct and so, by the pigeonhole principle, there exists $\alpha,\beta\in{\Bbb Z}[i]$ such that $\{\alpha z\}$ and $\{\beta z\}$ are in the same small sub-square of $R$.   
There are thus $\gamma,\delta\in{\Bbb Z}[i]$ such that
$$|(\alpha-\beta)z+(\gamma-\delta)|=|\{\alpha z\}-\{\beta z\}|\leq\frac{\sqrt 2}N.$$
Since $N$ was arbitrary, the result follows for any  $z\not\in{\Bbb Q}(i)$.   
The result obviously holds for $z\in{\Bbb Q}(i)$. \end{proof}

\begin{lemma}  ${\cal P}_i$ is maximal. \end{lemma}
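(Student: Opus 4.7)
The plan is to adapt the proof of Proposition~2.1 (the maximality of ${\cal P}$) verbatim to the Gaussian setting, splitting the statement into two assertions: (i) no sphere in ${\cal P}_i$ can be replaced by a normal sphere of strictly larger radius at the same tangent point, and (ii) no additional normal sphere with interior disjoint from every Ford sphere can be inserted into ${\cal P}_i$. Together with the observation made just above Lemma~5.1 that distinct Ford spheres already have disjoint interiors, these two facts deliver maximality with respect to the ordering $<$.

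For (i), fix $S_{\alpha,\beta}\in{\cal P}_i$. Since $\alpha\perp\beta$ in the Euclidean ring ${\Bbb Z}[i]$, there exist $\gamma,\delta\in{\Bbb Z}[i]$ with $\alpha\delta-\beta\gamma$ a unit, so $|\alpha\delta-\beta\gamma|=1$; by the tangency criterion (3) this yields $S_{\alpha,\beta}\,||\,S_{\gamma,\delta}$. Any sphere tangent to ${\Bbb C}$ at $\alpha/\beta$ with radius exceeding $1/(2|\beta|^2)$ would therefore intersect the interior of $S_{\gamma,\delta}$, ruling out any enlargement.

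For (ii), suppose $S(z,r)$ has interior disjoint from every Ford sphere. By (3), disjointness from $S_{\alpha,\beta}$ (with $\alpha\perp\beta$) amounts to $|\beta z-\alpha|\geq\sqrt{2r}$ for every such pair. If $z\in{\Bbb Q}(i)$, write $z=\alpha_0/\beta_0$ in lowest terms; then $S(z,r)$ and $S_{\alpha_0,\beta_0}$ are both tangent to ${\Bbb C}$ at the same point, so whichever is smaller is internally tangent to, and lies inside, the other, and their interiors intersect, unless $r=1/(2|\beta_0|^2)$ in which case $S(z,r)=S_{\alpha_0,\beta_0}$ is already in ${\cal P}_i$. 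If $z\notin{\Bbb Q}(i)$, apply Lemma~5.1 to find $\alpha,\beta\in{\Bbb Z}[i]$ with $0<|\beta z-\alpha|<\sqrt{2r}$; writing $\rho=\gcd(\alpha,\beta)$, the reduced pair $(\alpha/\rho,\beta/\rho)$ is relatively prime and satisfies $|(\beta/\rho)z-(\alpha/\rho)|=|\beta z-\alpha|/|\rho|\leq|\beta z-\alpha|<\sqrt{2r}$, violating the required inequality and yielding a contradiction.

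The only delicate point is the reduction to a relatively prime pair in the irrational case, but this is immediate once one notes that dividing out the gcd can only decrease $|\beta z-\alpha|$. The real content of the argument is thus carried by Lemma~5.1, the Diophantine density statement for Gaussian integers; with that in hand, the proof is a direct translation of the Ford-circle case and presents no genuine obstacle.
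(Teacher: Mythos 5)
Your proof is correct and follows essentially the same route as the paper's: tangency to some $S_{\gamma,\delta}$ via B\'ezout in ${\Bbb Z}[i]$ rules out enlargement, and the density statement of Lemma~5.1 combined with the tangency criterion (3) rules out inserting a new normal sphere. Your explicit treatment of the case $z\in{\Bbb Q}(i)$ and of the reduction to a relatively prime pair by dividing out the gcd (valid since $|\rho|\ge 1$) merely fills in details the paper leaves implicit.
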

\begin{proof}  
Note that any two Ford spheres $S_{\alpha,\beta},S_{\gamma,\delta}$ have disjoint interiors (since $|\alpha\delta-\beta\gamma|\ge 1$).  Furthermore, since $\alpha\perp\beta$, there exist $\gamma,\delta$ such that $|\alpha\delta-\beta\gamma|= 1$ and so  $S_{\alpha,\beta}||S_{\gamma,\delta}$.   Hence no Ford sphere can be enlarged.   

     Suppose there exist $z,r$ so that  for all $a,b$, $S(z,r)^{\circ}\cap S_{\alpha,\beta}= \emptyset$.  Obviously $z\not\in{\Bbb Q}(i)$  and thus, by Lemma 5.1, the set $\{\beta z-\alpha: \alpha\in{\Bbb Z}[\omega], \beta\in{\Bbb Z}[\omega]^+ \}$ is dense in ${\Bbb C}$  and so, for any fixed $r$, there exist $\alpha,\beta$ such that 
$$\left|z-\frac {\alpha}{\beta}\right|<\frac{\sqrt{2r}}{|\beta|}.$$
This implies $S(z,r)^{\circ}\cap S_{\alpha,\beta}^{\circ}\neq\emptyset$, a contradiction.  \end{proof}

We now define a set of spheres ${\cal R}_i$ recursively.    Start with the set ${\cal S}_0:=\{S_{0,1}, S_{1,0}\}$, perform the following process.   Given ${\cal S}_n$, create ${\cal S}_{n+1}$ by choosing two tangent spheres, say $S_{\alpha,\beta}, S_{\gamma, \delta}$ in ${\cal S}_n$ and add the sphere $S_{\alpha+\rho\gamma, \beta+\rho\delta}$ for one of the units $\rho\in\{1,-1,i,-i\}$.   In this way, 
we have a ``chain" of sets ${\cal S}_0\subset {\cal S}_1 \subset ...$.   Let ${\cal R}_i$ be the union of all sets in all such chains.   

\begin{lemma}   ${\cal P}_i={\cal R}_i$. \end{lemma}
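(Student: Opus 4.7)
I will prove both inclusions $\mathcal{R}_i \subseteq \mathcal{P}_i$ and $\mathcal{P}_i \subseteq \mathcal{R}_i$. The forward inclusion is an induction on the length of the chain from $\mathcal{S}_0$ to the sphere in question: the seed $\mathcal{S}_0$ is trivially in $\mathcal{P}_i$, and if tangent Ford spheres $S_{\alpha,\beta}, S_{\gamma,\delta} \in \mathcal{P}_i$ satisfy $|\alpha\delta - \beta\gamma| = 1$, then for any unit $\rho$ a common Gaussian-integer divisor of $\alpha + \rho\gamma$ and $\beta + \rho\delta$ divides $\alpha(\beta + \rho\delta) - \beta(\alpha + \rho\gamma) = \rho(\alpha\delta - \beta\gamma)$, a unit, and hence is itself a unit; so the child is in $\mathcal{P}_i$.

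For the reverse inclusion I will induct on $|\beta|^2$. The base $\beta = 0$ gives $S_{\alpha, 0} = S_{1, 0} \in \mathcal{S}_0$. When $\beta$ is a unit, $S_{\alpha, \beta} = S_{\alpha\beta^{-1}, 1}$, so it suffices to reach every $S_{\gamma, 1}$ with $\gamma \in \mathbb{Z}[i]$; a sub-induction on $|\gamma|^2$ handles this, since for $|\gamma| \ge 1$ one of the four units $\rho \in \{\pm 1, \pm i\}$ satisfies $|\gamma - \rho| < |\gamma|$, so that $S_{\gamma - \rho, 1} \in \mathcal{R}_i$ by sub-induction, is tangent to $S_{1, 0} \in \mathcal{S}_0$, and has $S_{\gamma, 1}$ as its child via the unit $\rho$.

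For the inductive step with $|\beta|^2 \ge 2$, the key algebraic observation is that, given any Ford sphere $S_{\gamma, \delta}$ tangent to $S_{\alpha, \beta}$ and any unit $\rho$, the pair $(\epsilon, \zeta) := \rho^{-1}(\alpha - \gamma, \beta - \delta)$ is itself a relatively prime Gaussian integer pair (a common divisor divides $\rho u$ where $u = \alpha\delta - \beta\gamma$) with $|\gamma \zeta - \delta \epsilon| = 1$, making $S_{\epsilon, \zeta}$ a Ford sphere tangent to $S_{\gamma, \delta}$ whose child under $\rho$ is $S_{\alpha, \beta}$. The task therefore reduces to locating $(\gamma, \delta)$ and $\rho$ such that both $S_{\gamma, \delta}$ and $S_{\epsilon, \zeta}$ sit strictly below $|\beta|^2$ in the inductive parameter. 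Bezout in the Euclidean ring $\mathbb{Z}[i]$ yields some $(\gamma, \delta)$ with $\alpha\delta - \beta\gamma = 1$, and the ambiguity $(\gamma, \delta) \mapsto (\gamma + t\alpha, \delta + t\beta)$ lets me reduce $\delta$ modulo $\beta \mathbb{Z}[i]$ to a representative of norm at most $|\beta|^2/2$, so the first parent is smaller.

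The main obstacle is ensuring the second parent also shrinks, i.e.\ $|\zeta|^2 = |\beta - \rho\delta|^2 < |\beta|^2$, because a naive choice of $\delta$ can make $|\beta - \delta|$ as large as $|\beta|(1 + 1/\sqrt 2)$. I plan to address this either by refining the coset representative $\delta$, demanding that $\delta/\beta$ lie in a $90^\circ$ angular cone about the positive real axis (achievable via the four-fold unit symmetry and a pigeonhole on the angle of $\delta/\beta$), or by iterating a slow Gaussian Euclidean step: each slow step legitimately corresponds to a parent--child construction precisely when one coordinate is already a unit, which is exactly the sub-base case handled above, and chaining these steps in reverse produces a chain from $\mathcal{S}_0$ to $S_{\alpha, \beta}$.
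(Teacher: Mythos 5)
Your proposal is correct in outline but takes a genuinely different route from the paper's. The paper obtains $\mathcal{R}_i\subseteq\mathcal{P}_i$ by appealing to the maximality of $\mathcal{P}_i$ (its Lemma 5.2), and proves $\mathcal{P}_i\subseteq\mathcal{R}_i$ by introducing a slow Euclidean algorithm on pairs in $\mathbb{Z}[i]$ (rotate by a unit into a $45^{\circ}$ sector, then subtract), defining the rank of $(\alpha,\beta)$ as the number of steps, and producing the two parents by reversing the \emph{entire} word of steps applied to the two terminal seed pairs $[\rho_1,0]$ and $[0,\rho_2]$; the induction is on rank. You instead prove $\mathcal{R}_i\subseteq\mathcal{P}_i$ directly (your computation $\alpha(\beta+\rho\delta)-\beta(\alpha+\rho\gamma)=\rho(\alpha\delta-\beta\gamma)$ is exactly what is needed, and it also shows $\mathcal{R}_i$ is a normal family), and prove $\mathcal{P}_i\subseteq\mathcal{R}_i$ by induction on $|\beta|^2$, constructing the parents by hand from a Bezout pair. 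Your first remedy for the ``second parent'' obstacle is the right one and closes the argument in one line: choose $t\in\mathbb{Z}[i]$ so that $w:=\delta/\beta-t$ has $|w|\le 1/\sqrt 2$ (note $w\neq 0$ since $\delta\perp\beta$ and $|\beta|^2\ge 2$), then replace $(\gamma,\delta)$ by $(u\gamma,u\delta)$ with the unit $u$ chosen so that $\arg(uw)\in[-\pi/4,\pi/4]$; then $|1-uw|^2=1+|w|^2-2|w|\cos(\arg(uw))\le 1+|w|^2-\sqrt 2\,|w|<1$, so both $|\delta|<|\beta|$ and $|\beta-\delta|<|\beta|$, both parents fall under the inductive hypothesis, and the child with $\rho=1$ is $S_{\alpha,\beta}$. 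By contrast, your second fallback does not work as stated: a single reversed slow step adds $\rho\beta$ to one coordinate, i.e.\ it is a child construction whose second parent is the pair $(\beta,0)$, which is an admissible (coprime, tangent) member of the configuration only when $\beta$ is a unit, as you yourself observe; so for $|\beta|\ge 2$ the intermediate slow steps are not moves in the recursion, and this is precisely why the paper reverses the whole step sequence applied to the seeds rather than one step at a time. Finally, like the paper, you implicitly use that two spheres each lying in $\mathcal{R}_i$ lie in a common chain; this is harmless (concatenate the two finite chains), but is worth a sentence.
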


\begin{proof}  By Lemma 5.2, It is enough to show that ${\cal P}_i\subset {\cal R}_i$.  
We shall do this by utilizing the ``slow Euclidean algorithm" for the Gaussian integers.

Given Gaussian integers $\alpha$ and $\beta$, one can choose a unit $\rho$ so that the angle between $\alpha$ and $\rho\beta$, as vectors, is in $[-\pi/4, \pi/4]$.  
Therefore, if $0<|\beta|\le|\alpha|$, then there is a unit $\rho$ such that $|\alpha-\rho\beta|<|\alpha|$.   
We now define $SEA_i$, the slow Euclidean algorithm for ${\Bbb Z}[i]$:  
\[
[\alpha,\beta] \longmapsto
\begin{cases}
   [\alpha-\rho\beta, \beta], &\text { if $0<|\beta|<|\alpha|$ and $\rho\in{\cal U}$ with $|\alpha-\rho\beta|<|\alpha|$ }\\
 [\alpha, \beta-\rho\alpha], &\text { if $0<|\alpha|\le|\beta|$ and $\rho\in{\cal U}$ with $|\beta-\rho\alpha|<|\beta|$ }\\
 \text {stop}, &\text { if $\alpha\beta=0$.}
 \end{cases}
 \]
 Note that $|\alpha|^2+|\beta|^2$ is decreasing, positive, integer valued so the algorithm must stop for any two Gaussian integers $\alpha,\beta$.
 Since the ``greatest" common divisors are preserved at each step, the algorithm must end at either $(0,\rho)$ or $(\rho,0)$ for some $\rho$.   Further, if $\alpha\perp\beta$, then $\rho$ must be a unit.
 We say that the ``rank" of a pair $[\alpha,\beta]$ is the number of steps taken by $SEA_i$ to end.

As with the parent algorithm for Ford circles,  run $SEA_i$ until the penultimate state $[\rho_1,\rho_2]$ (where $\rho_1, \rho_2\in{\cal U}$) and then reverse the steps on $[\rho_1,0]$ and $[0,\rho_2]$.  The resulting pairs, call them $[\alpha_1, \beta_1]$ and $[\alpha_2, \beta_2]$ are ``parents" of $[\alpha,\beta]$ in the sense that each has lower rank than $[\alpha,\beta]$ and the two must sum to  $[\alpha,\beta]$.   

Every sphere of rank 0 is in ${\cal R}_i$.   Suppose that every sphere of rank less than $n$ is in ${\cal R}_i$ and suppose $S_{\alpha,\beta}\in{\cal P}_i$.   Then its parents have lower rank and so are in ${\cal R}_i$.   By the definition of ${\cal R}_i$,  $S_{\alpha,\beta}\in{\cal R}_i$.  By (strong) induction, the lemma is shown.  
 \end{proof}

Given three non-colinear points $P_1,P_2,P_3$, we consider them on the boundary of a hyperbolic disk.    There is then a unique geodesic triangle with  vertices $P_1,P_2,P_3$ and, a unique circle inscribed in that triangle.   This gives rise to three unique points $Q_1,Q_2,Q_3$ such that the six points $P_1,P_2, P_3, Q_1,Q_2,Q_3$ form an octahedron (that we call a \emph{M\"obius octahedron} generated by $P_1,P_2,P_3$ since a M\"obius transformation changes it to another such octahedron) -- see Figure 4.

\begin{figure}[htbp] 
   \centering
   \includegraphics[ width=1.6 in]{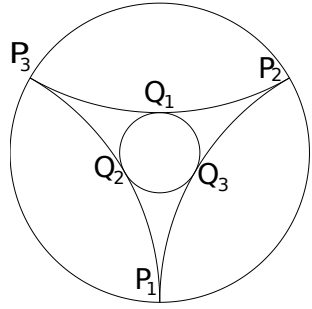} 
   \caption{M\"obius Octahedron}
   \label{fig. 4}
\end{figure}

The \emph{contact graph} of a finite collection $V$ of normal spheres is the graph with vertex set $V$ where two vertices share an edge if the two spheres are tangent.   For example, three mutually tangent spheres have tangency graph a triangle.    

\begin{theorem} The six points of a M\"obius octahedron are the tangent points of six spheres with contact graph an octahedron.    \end{theorem}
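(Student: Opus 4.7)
The plan is to reduce to a symmetric standard configuration via a M\"obius transformation, verify the claim there by a direct computation, and transfer back using the Poincar\'e extension. Using triple-transitivity of M\"obius transformations on ${\Bbb C}\cup\{\infty\}$, I would pick $m$ sending $(P_1,P_2,P_3)$ to the equilateral triple $(P_1^\star,P_2^\star,P_3^\star):=(0,\,1,\,(1+i\sqrt 3)/2)$. Since $m$ preserves circles, it will carry the circle through $P_1,P_2,P_3$ to the circumscribed circle $C^\star$ of the standard triangle and restrict to a hyperbolic isometry between the Poincar\'e disks they bound; consequently $m$ will send the geodesic triangle on $P_1,P_2,P_3$ to the one on the standard triangle, its inscribed circle to the standard inscribed circle, and hence the M\"obius octahedron $\{P_1,P_2,P_3,Q_1,Q_2,Q_3\}$ to the standard M\"obius octahedron $\{P_i^\star,Q_i^\star\}$.

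In the standard case I would exploit three-fold rotational symmetry about $(1/2,\sqrt 3/6)$. The geodesic side from $0$ to $1$ in the Poincar\'e disk bounded by $C^\star$ is the arc of the circle centered at $(1/2,-\sqrt 3/2)$ of radius $1$, giving $Q_3^\star=(1/2,1-\sqrt 3/2)$ (with $Q_1^\star,Q_2^\star$ its $\pm 120^\circ$ rotates about the center). By Proposition~3.1 the three ``parent'' spheres $S(P_i^\star,\tfrac12)$ are mutually tangent; the identities $|Q_3^\star-P_1^\star|^2=|Q_3^\star-P_2^\star|^2=2-\sqrt 3$ together with Proposition~3.1 force the unique normal ``child'' with tangent point $Q_3^\star$ that is tangent to both $S(P_1^\star,\tfrac12)$ and $S(P_2^\star,\tfrac12)$ to be $S(Q_3^\star,(2-\sqrt 3)/2)$, while $|Q_3^\star-P_3^\star|^2=4-2\sqrt 3\ne 2-\sqrt 3$ shows by (3) that this child is not tangent to the third parent. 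By symmetry the analogous statement holds for all three children, and $|Q_i^\star-Q_j^\star|=2-\sqrt 3$ for $i\ne j$ will give, via (3), mutual tangency among children. The contact graph is thus the octahedron: two disjoint triangles (parents and children) joined antiprismatically by six child-parent tangencies.

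To transfer back, I would apply the inverse $\overline m^{-1}$ of the Poincar\'e extension of $m$ to the six standard spheres. By Proposition~3.2, $\overline m^{-1}$ maps normal spheres bijectively to normal spheres and preserves tangency, so the resulting six normal spheres have octahedral contact graph and tangent points with ${\Bbb C}$ at $m^{-1}(P_i^\star)=P_i$ and $m^{-1}(Q_i^\star)=Q_i$ --- precisely the six vertices of the original M\"obius octahedron.

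The hard part will be justifying the opening M\"obius transport: that $m$ carries not only the triple of boundary points but the entire M\"obius octahedron construction (geodesic triangle, inscribed circle, tangent points) to its image. This reduces to the classical fact that such an $m$ restricts to a hyperbolic isometry between the two Poincar\'e disk models, so all intrinsic hyperbolic data transfer automatically; once granted, the theorem follows from the clean symmetric calculation in the standard case.
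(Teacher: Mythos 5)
Your proposal is correct, but it takes a genuinely different route from the paper's. The paper also normalizes to a special configuration, yet only to evaluate a M\"obius-invariant scalar: it expresses $(AB)(AC)(DE)(EF)/\left((AE)^2(BC)(DF)\right)$ as the absolute value of a product of cross-ratios, checks it equals $1$ for the octahedron $\{0,1,\infty,i,1+i,(1+i)/2\}$, and so obtains a metric identity valid for every M\"obius octahedron; combining that identity with the explicit radii of Proposition 3.1 and the tangency criterion (3) yields the cross tangencies directly at the general points, without ever moving spheres. You instead verify the whole sphere configuration explicitly in a different standard model (the equilateral ideal triangle in the disk, with correct coordinates and radii) and then transport the six spheres by the Poincar\'e extension, using Proposition 3.2 and the fact that $\hat m$ and $\hat m^{-1}$ preserve tangency, so both tangency and non-tangency transfer. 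Your version is more hands-on and has the merit of checking the antipodal non-tangencies, so the contact graph is exactly the octahedron and not $K_6$, a point the paper leaves implicit; the paper's cross-ratio identity is slicker and sidesteps transporting the construction. Both arguments rest on the same transport fact --- every M\"obius octahedron is the M\"obius image of a standard one --- which the paper builds into its definition and you rightly flag; your hyperbolic-isometry justification works, with one small patch: the map sending $(P_1,P_2,P_3)$ to $(0,1,(1+i\sqrt 3)/2)$ might carry the relevant hyperbolic disk to the exterior of the circumcircle, landing on the other M\"obius octahedron over that triple, so relabel (swap $P_1$ and $P_2$) if necessary to match orientations; this is harmless since the statement is symmetric in the $P_i$.
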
  

\begin{proof} Recall the cross ratio:  $$[z,q,r,s]:=\dfrac{(z-q)(r-s)}{(z-s)(r-q)}.$$
It is well-known that it is invariant under M\"obius transformations:  
$$[m(z),m(q),m(r),m(s)]=[z,q,r,s]$$ (see Needham \cite{Ne}).
Let $A,B,C,D,E,F$ be vertices of a M\"obius octahedron where $A,B,C$ form a triangle and $D,E,F$ form a triangle.   Let $(AB)$ denote $|A-B|$, etc.  It is easy to verify that 
$$|[F,C,B,D][A,B,D,E][A,C,F,E]|=\left| \dfrac{(AB)(AC)(DE)(EF)}{(AE)^2(BC)(DF)}\right|$$
and so the right side is invariant under M\"obius transformations.  For the special case where $A=0, B=(i+1)/2, C=1,D=1+i,E=\infty$, and $F=i$, the right side is 1 and since every M\"obius octahedron is the image of the special one under a M\"obius transformation,
$$(AE)^2=\dfrac{(AB)(AC)(ED)(EF)}{(BC)(BF)}.\eqno{(9)}$$
By Proposition 3.1, there are three mutually tangent normal spheres tangent to the plane at $A,B,C$ and another three tangent at $D,E,F$.  By (9) and Proposition 4.1, $|A-E|^2=4r_Ar_E$ and so the spheres at $A$ and $E$ are tangent.   Similar arguments for pairs $AF, BD,BF, CD,$ and $CE$ show that the six spheres have an octahedral tangency graph.
\end{proof}

``Extended" cross-ratios like those in the proof of Theorem 5.4 make an appearance in the paper \cite{KS}.

\begin{lemma} Given three mutually tangent spheres, there are exactly two ways to choose another set of three mutually tangent spheres so that those six have contact graph an octahedron.   Further, every collection of six normal spheres with contact graph an octahedron are tangent to the plane at six points in some M\"obius octahedron. 

\end{lemma}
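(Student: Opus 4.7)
The plan is to reduce to a standard reference triangle via a M\"obius transformation, exhibit two completions there, and verify that no others exist. Given three mutually tangent normal spheres $S_1,S_2,S_3$ with tangent points $P_1,P_2,P_3$, Proposition 3.1 tells us these points determine the spheres. Choose a M\"obius transformation $m$ carrying $(P_1,P_2,P_3)$ to the reference triple $(A,B,C)=(0,(1+i)/2,1)$ of Theorem 5.4. By Proposition 3.2 its Poincar\'e extension $\overline{m}$ sends normal spheres to normal spheres (with the horizontal plane $S_{1,0}$ as a limiting case), preserves tangency, and sends M\"obius octahedra to M\"obius octahedra. Hence both assertions reduce to the reference case.

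For $(A,B,C)$, the proof of Theorem 5.4 already exhibits one M\"obius octahedron $\mathcal{O}_1=\{A,B,C,1+i,\infty,i\}$ with six associated spheres of octahedral contact graph (one being the plane $S_{1,0}$). The unique Euclidean circle $\Gamma$ through $A,B,C$ has center $1/2$ and radius $1/2$, and $\{1+i,\infty,i\}$ lies in its exterior. Either component of $\widehat{\Bbb C}\setminus\Gamma$ may be given a Poincar\'e disk structure with $\Gamma$ as ideal boundary; each supports a unique geodesic triangle on $A,B,C$ with a unique inscribed hyperbolic circle tangent to the three sides. The exterior disk reproduces $\mathcal{O}_1$, while the interior disk produces a second M\"obius octahedron $\mathcal{O}_2$ whose opposite face lies inside $\Gamma$ and so is distinct from that of $\mathcal{O}_1$. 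Theorem 5.4 then yields two distinct six-sphere octahedral arrangements completing $\{S_1,S_2,S_3\}$.

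To verify that these are the only completions, note that a candidate completing triple $S_4,S_5,S_6$ must satisfy the octahedral tangencies: $S_4$ tangent to $S_2,S_3$ but not $S_1$, and cyclically, together with pairwise tangency of $S_4,S_5,S_6$. By equation (3), the set of normal spheres tangent to a fixed pair of tangent normal spheres is a one-parameter family, so the combined constraints cut down to a finite solution set. In the reference configuration, one verifies by direct enumeration --- or, equivalently, via the cross-ratio identity (9) from the proof of Theorem 5.4, which forces the six tangent points to form a M\"obius octahedron --- that there are exactly two completions, namely the opposite faces of $\mathcal{O}_1$ and $\mathcal{O}_2$. The second assertion then follows by the same reduction: given six spheres with octahedral contact graph, pick a triangular face and normalize its tangent points to $(A,B,C)$; the opposite face must coincide with that of $\mathcal{O}_1$ or $\mathcal{O}_2$, and pulling back delivers a M\"obius octahedron. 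The main obstacle is the finite count in the reference case, which I expect to handle by solving the tangency equations explicitly or by invoking (9) to rigidify the opposite face.
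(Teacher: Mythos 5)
Your overall outline is the same as the paper's (normalize the three tangent points by a M\"obius transformation, exhibit two explicit completions of the normalized triple, pull back, and deduce the second assertion from the first), and the existence half is fine: the two M\"obius octahedra generated by the three tangent points, one for each disk bounded by their circumscribed circle, do give two genuinely different octahedral completions via Theorem 5.4. The genuine gap is the step you yourself flag as ``the main obstacle'': the claim that there are \emph{exactly two} completions in the reference configuration. Observing that each completing sphere lies in a one-parameter family only gives (at best) finiteness, not the count of two, and neither proposed repair closes the hole. ``Direct enumeration'' is simply deferred work, and the appeal to identity (9) is circular: (9) was derived in Theorem 5.4 as a property of the six vertices of a M\"obius octahedron, and it shows that such a sextuple of points supports an octahedral sphere arrangement; it does not show that an arbitrary sextuple of normal spheres with octahedral contact graph has tangent points forming a M\"obius octahedron. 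That converse is precisely the second assertion of the lemma, which in your plan is itself deduced from the uniqueness count --- so as written, both the ``exactly two'' claim and the second assertion remain unproven.

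This is also where the paper's choice of normalization does real work, and where your choice of the all-finite reference triple $(0,(1+i)/2,1)$ costs you. The paper sends the three tangent points to $0,1,\infty$, so the normalized triple of spheres is $S_{0,1}$, $S_{1,1}$ and the horizontal plane $S_{1,0}$; any normal sphere tangent to $S_{1,0}$ has radius exactly $1/2$, and tangency of $S_{\alpha,\beta}$ and $S_{\gamma,\delta}$ amounts to $|\alpha\delta-\beta\gamma|=1$ by (3), so the completing triples can be pinned down by a short explicit computation: two of the completing spheres have radius $1/2$ with tangent points on the unit circles about $0$ and about $1$, the third has tangent point on the line $\Re z=\tfrac12$, and the remaining pairwise tangencies together with the required non-tangency of opposite pairs force exactly the two triples $S_{\rho,\rho+1},S_{\rho,1},S_{1+\rho,1}$ with $\rho=\pm i$, i.e.\ the two M\"obius octahedra on $0,1,\infty$. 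To fix your write-up, either switch to that normalization and carry out this enumeration (your pull-back argument for the second assertion then goes through verbatim), or supply an honest finite count in your reference configuration; without one of these, the lemma is not established.
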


\begin{proof}  
 Given three mutually tangent spheres, let $z_1,z_2,z_3$ denote where they are tangent to ${\Bbb C}$.  
The M\"obius transformation $m(z):=[z,z_1,z_2,z_3]$ defined in terms of the cross ratio takes $z_1,z_2,z_3$ to $0,1,\infty$ respectively (see Figure 6).   
 Its Poincar\'e extension takes the three spheres to $S_{0,1}, S_{1,1}, S_{1,0}$.   It's easy to verify that these three spheres together with the spheres $S_{\rho, \rho+1}, S_{\rho, 1}, S_{1+\rho, 1}$ (where $\rho=i$ or $-i$) forms an octahedral arrangement and so the reverse M\"obius transformation $m^{-1}$ extends either one to an octahedral arrangement.  
  There are no more than these two since the points $0,1,\infty$ form a triangle in a M\"obius octahedra if and only if that octahedron has vertices $\{0,1,\infty, i, 1+i, (1+i)/2\}$ or $\{0,1,\infty, -i, 1-i, (1-i)/2\}$.
  
  Given six normal spheres with contact graph an octahedron,  choose three mutually tangent spheres that are tangent to ${\Bbb C}$ at, say, $z_1, z_2, z_3$.   The image of all six spheres under the Poincar\'e extension of $[\cdot, z_1,z_2,z_3]$ is then, by the argument above, vertices of a M\"obius octahedron (see Figure 6) and the result follows.  \end{proof}
  
  \begin{figure}[htbp] 
   \centering
   \includegraphics[ width=1.8
   in]{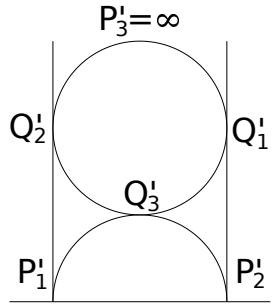} 
   \caption{Image of Fig. 4 under $[\cdot, P_1,P_2,P_3]$}
   \label{fig. 6}
\end{figure}



We may thus define a new type of Ford sphere geometrically.
We now define ${\cal G}_i$ recursively.    Start with the set ${\cal S}_0:=\{S_{0,1}, S_{1,1}, S_{1,0}\}$, perform the following process.   Given ${\cal S}_n$, choose three spheres, and add the three as defined in Lemma 5.1 and thus form ${\cal S}_{n+1}$.  In this way, we have a ``chain" of sets ${\cal S}_0\subset {\cal S}_1 \subset ...$.   Let ${\cal G}_i$ be the union of all sets in all such chains.

\begin{lemma}  ${\cal R}_i\subset {\cal G}_i$.  \end{lemma}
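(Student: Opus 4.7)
The plan is strong induction on rank in ${\cal R}_i$, where the rank of a sphere is the least number of pair-steps in the ${\cal R}_i$-construction that produce it from the seed $\{S_{0,1},S_{1,0}\}$. The base case handles $S_{0,1}$ and $S_{1,0}$, both in ${\cal G}_0\subset{\cal G}_i$.

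For the inductive step, suppose $S=S_{\alpha_1+\rho\alpha_2,\,\beta_1+\rho\beta_2}$ has rank $n$, produced from tangent parents $A=S_{\alpha_1,\beta_1}$ and $B=S_{\alpha_2,\beta_2}$ of smaller rank, so that $A,B\in{\cal G}_i$ by hypothesis. I would exhibit a third sphere $C\in{\cal G}_i$ tangent to both $A$ and $B$ such that $S$ appears in the octahedral completion of $\{A,B,C\}$. Since octahedral completions are the defining extension rule for ${\cal G}_i$, this forces $S\in{\cal G}_i$.

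The selection of $C$ is governed by a short direct computation. Writing $C_\tau:=S_{\alpha_1+\tau\alpha_2,\,\beta_1+\tau\beta_2}$ for a Gaussian unit $\tau$, the tangency condition $|\alpha_1\beta_2-\alpha_2\beta_1|=1$ forces the spheres in ${\cal P}_i$ tangent to both $A$ and $B$ to be exactly $\{C_\tau:\tau\in\{\pm 1,\pm i\}\}$. Verifying on the model triple $\{S_{0,1},S_{1,1},S_{1,0}\}$ and transporting by the Poincar\'e extension of Proposition 3.2, the two octahedral completions of $\{A,B,C_\sigma\}$ deliver, in the slot opposite $C_\sigma$, precisely the spheres $C_{i\sigma}$ and $C_{-i\sigma}$. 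Hence $S=C_\rho$ arises as soon as one of $C_{i\rho},C_{-i\rho}$ already lies in ${\cal G}_i$.

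The main obstacle is ensuring that one of these two neighbors is indeed in ${\cal G}_i$. I would handle this by strengthening the inductive hypothesis so that, for every tangent pair $A,B\in{\cal G}_i$ whose ${\cal R}_i$-parents both have rank at most $k$, all four combinations $C_\tau$ lie in ${\cal G}_i$. The base case is verified explicitly for $(S_{0,1},S_{1,0})$: $C_1=S_{1,1}\in{\cal G}_0$, $C_{\pm i}=S_{\pm i,1}\in{\cal G}_1$ (from the first octahedral completion of ${\cal S}_0$), and $C_{-1}=S_{-1,1}\in{\cal G}_2$ (produced as the completion of $\{S_{0,1},S_{1,0},S_{i,1}\}$, a triple in ${\cal G}_1$). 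The strengthened inductive step is bootstrapped by locating one known $C_\sigma\in{\cal G}_i$ (arising, for instance, as a common vertex in the octahedron through which the later of $A,B$ was first introduced) and iterating: an octahedral completion with that $C_\sigma$ supplies $C_{i\sigma}$ and $C_{-i\sigma}$, and a second round using one of the latter as the third sphere supplies the remaining two. Tying up this bootstrapping is the technical crux of the argument.
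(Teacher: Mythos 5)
Your core computation is the same as the paper's, and it is correct: for a mutually tangent triple $\{A,B,C_\sigma\}$ with $C_\tau:=S_{\alpha_1+\tau\alpha_2,\,\beta_1+\tau\beta_2}$, the two octahedral completions place $C_{i\sigma}$ and $C_{-i\sigma}$ opposite $C_\sigma$, and one further completion (using $C_{i\sigma}$ as the third sphere) yields the remaining unit; this is exactly the paper's argument that the completions of $S[U],S[U+V],S[V]$ contain $S[U\pm iV]$ and that the triangle $S[U],S[U+iV],S[V]$ then produces $S[U-V]$. The genuine gap is the step you yourself label the ``technical crux'': exhibiting the seed $C_\sigma\in{\cal G}_i$ tangent to both parents $A$ and $B$. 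As you note, the only spheres of ${\cal P}_i$ tangent to both $A$ and $B$ are the four $C_\tau$, so asking for a seed in ${\cal G}_i$ is asking for precisely what your strengthened inductive hypothesis is supposed to deliver; the inductive step for that strengthened statement is never carried out, so at this point the argument is circular and the proof does not close. The one concrete mechanism you offer --- take a vertex of the octahedron through which the later of $A,B$ was first introduced into ${\cal G}_i$ --- is not backed by any argument: an ${\cal R}_i$-step may pair $B$ with \emph{any} tangent sphere $A$ already in the chain, created elsewhere and in no way tied to that octahedron, so nothing shown guarantees that octahedron contains a sphere tangent to $A$ (let alone one of the $C_\tau$).

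The paper avoids the bootstrap entirely at this point: it observes that two tangent spheres $S[U],S[V]$ of ${\cal G}_i$ lie in a common octahedral arrangement of the construction, hence in a mutually tangent triple which, after replacing $V$ by a unit multiple (using $S[\rho V]=S[V]$), may be written $S[U],S[U+V],S[V]$; the third vertex of that triangle \emph{is} the seed, and then your two-round completion computation gives $S[U+\rho V]\in{\cal G}_i$ for every unit $\rho$, which suffices since ${\cal R}_i$ is generated from $S_{0,1},S_{1,0}\in{\cal G}_i$ by exactly such steps. So the repair is not to manufacture $C_\sigma$ by an auxiliary induction, but to argue (or cite) that a tangent pair in ${\cal G}_i$ already comes embedded in a triangle of the ${\cal G}_i$-construction; without that fact, or a completed inductive step for your strengthened hypothesis, the proposal is incomplete.
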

\begin{proof}
We define, for $U:=(\alpha, \beta)\in{\Bbb Z}[i]^2$,  $S[U]:= S_{\alpha,\beta}$.   Note that for any unit $\rho$,  $S[\rho U]=S[U]$.   

 It is enough to show that if $S[U], S[V]$ are two tangent spheres in ${\cal G}_i$ and $\rho$ is a unit, then $S[U+\rho V]\in{\cal G}_i$.     Suppose  $S[U], S[V]$ are two tangent spheres in ${\cal G}_i$.   The two spheres are then part of an octahedral arrangement and so part of a triangle which, without loss of generality, is of the form $S[U], S[U+V], S[V]$.  
It is easy to verify that for $\rho=\pm i$,   $X_{\rho}:=\{S[U], S[U+V], S[V], S[U+\rho(U+V)], S[U+\rho V], S[U+V+\rho V]\}$  forms an octahedral arrangement of spheres and thus, by Lemma 5.5, these are the only octahedral extensions of $S[U], S[U+V], S[V]$.    In this way, we see that $S[U+\rho V]\in{\cal G}_i$ for $\rho =1, i, -i$.   The triangle of spheres $S[U], S[U+iV], S[V]$ is part of two octahedra, one of which contains $S[U-V]$ and so the lemma is shown. 
\end{proof}

We now reconsider equation (8):
 $$ (a+b+c+d)^2=2(a^2+b^2+c^2+d^2).$$
 A solution of (8) is known as a \emph{Descartes quadruple} and it was known (by Descartes) that
 that the curvatures of four mutually tangent circles satisfy (8)  (see \cite{GLMWYnt}).
We shall parameterize the sets set of spheres ${\cal P}_i$ by relatively prime integer solutions of (8).

As before, we define ${\Bbb Z}_{\perp}^n$ to be the set of relatively prime integer $n$-tuples.
Let ${\cal D}$ denote the set of relatively prime solutions of (8):
$${\cal D}:=\{(a,b,c,d)\in{\Bbb Z}_{\perp}^4:  (a+b+c+d)^2=2(a^2+b^2+c^2+d^2)\}.$$
A related set, the set of ``Descartes triples", can be defined:
$${\cal S}:=\{(a,b,c)\in{\Bbb Z}_{\perp}^3:  \sqrt{ab+ac+bc}\in{\Bbb Z}\}.$$
We call them Descartes triples since it
is easy to see that if $(a,b,c)\in {\cal S}$, then $(a,b,c,a+b+c\pm 2\sqrt{ab+ac+bc})\in {\cal D}$ and, conversely, if
$(a,b,c,d)\in {\cal D}$ then $(a,b,c)\in {\cal S}.$

Let $$M(z):=iz/((1-z)\omega) \text{ and } M^{-1}(z):=\omega z/(\omega z+i).$$
Then $M$ is the M\"obius transformation taking the points $0,1, 1+\omega$ (the vertices of an equilateral triangle of side length 1) to the points $0, \infty, i$ respectively.  
Using the notation of Section 3, 

\begin{lemma} Given $\alpha, \beta\in{\Bbb C}$, let $a:=|\beta|^2+\Im(\overline\alpha\beta), b:=|\alpha|^2+\Im(\overline\alpha\beta),  c:=-\Im(\overline\alpha\beta)$, and
$m=\Re(\overline\alpha\beta)$.   Then
$$M^{-1}(S_{\alpha, \beta})=\langle a+m/\sqrt 3, b+m/\sqrt 3, c+m/\sqrt 3\rangle$$
and $|m|=\sqrt{ab+ac+bc}$. 
\end{lemma}

\begin{proof}  By Section 3, 
$$M^{-1}(S_{\alpha,\beta})=S_{\omega\alpha,\omega\alpha+i\beta}=S\left(\frac{\omega\alpha}{\omega\alpha+i\beta}, \frac1{2|\omega\alpha+i\beta|^2}\right).$$
For this sphere to agree with $\langle A,B,C\rangle:=S((B+C(1+\omega))/(A+B+C), 1/2(A+B+C)),$
we must have
$$A+B+C=|\omega\alpha+i\beta|^2=|\alpha|^2+|\beta|^2+\sqrt 3 \Re(\overline\alpha\beta)+\Im(\overline\alpha\beta)$$
and 
$$\begin{aligned} B+C(1+\omega)&=\omega\alpha(\overline{\omega\alpha+i\beta})=|\alpha|^2-i\omega\alpha\overline\beta\\
&=|\alpha|^2+\frac12\sqrt 3\Re(\overline\alpha\beta)+\frac12\Im(\overline\alpha\beta)
+i\left( \frac12\Re(\overline\alpha\beta)-\frac12\sqrt 3\Im(\overline\alpha\beta)\right). \end{aligned}$$
Solving for $A,B,C$, we find 
$$\begin{aligned} &C=\Re(\overline\alpha\beta)/\sqrt 3-\Im(\overline\alpha\beta),\\
&B=|\alpha|^2+\Re(\overline\alpha\beta)/\sqrt 3+\Im(\overline\alpha\beta),\\
&A=|\beta|^2+\Re(\overline\alpha\beta)/\sqrt 3+\Im(\overline\alpha\beta),\end{aligned}$$
 and the first part of the lemma follows.
 
 With $a,b,c$ defined above, it is easy to verify that 
 $$ab+ac+bc=|\alpha|^2|\beta|^2-\Im(\overline\alpha\beta)^2=\Re(\overline\alpha\beta)^2=m^2.$$ \end{proof}

We now define a type of Ford sphere barycentrically:  let $${\cal B}_i:=\{\langle a+m/\sqrt 3, b+m/\sqrt 3, c+m/\sqrt 3\rangle:  (a,b,c)\in {\cal S}, m=\pm\sqrt{ab+ac+bc}\}.$$

\begin{lemma} ${\cal P}_i\subset M({\cal B}_i)$.  \end{lemma}

\begin{proof}  Define 
$$V(a,b,c):=(a+m/\sqrt 3, b+m/\sqrt 3, c+m/\sqrt 3, -2m/\sqrt 3)$$
where $m=\sqrt{ab+ac+bc}$.  
It is easy to check that $V(a,b,c)$ satisfies equation (7).   Further, with $Q$ the bilinear form defined in Section 3,
$$\begin{aligned} &Q(V(a,b,c), V(A,B,C))\\&=aB+aC+bA+bC+cA+cB-2\sqrt{ab+ac+bc}\sqrt{AB+AC+BC}.\end{aligned}\eqno{(10)}$$

Let $S_{\alpha,\beta}\in {\cal P}_i$.   Then $\alpha\perp\beta\in {\Bbb Z}[i]$ and so $|\alpha|^2, |\beta|^2, \Re(\overline\alpha\beta), \Im(\overline\alpha\beta) \in {\Bbb Z}$.  
By Lemma 5.7, 
$$S_{\alpha, \beta}=M(\langle a+m/\sqrt 3, b+m/\sqrt 3, c+m/\sqrt 3\rangle)$$
for integers $a,b,c$.   
Since $\alpha\perp\beta$  there exist Gaussian integers $\gamma,\delta$ so that $|\alpha\delta-\beta\gamma|=1$ and also $\gamma\perp\delta$.  
Then $S_{\gamma,\delta}=M(\langle A+n/\sqrt 3, B+n/\sqrt 3, C+n/\sqrt 3\rangle)$ for integers $A,B,C,n$.
By (10), $aB+aC+bA+bC+cA+cB-2mn=1$ and thus $(a,b,c)\in {\cal S}$ and thus $S_{\alpha,\beta}\in M({\cal B}_i)$.

\end{proof}

Together, the lemmas of this Section imply

\begin{theorem}  ${\cal G}_i = {\cal P}_i = M({\cal B}_i)$. \end{theorem}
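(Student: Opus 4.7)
The plan is to chain the inclusions already established in Lemmas 5.3, 5.6, and 5.8 with the maximality of ${\cal P}_i$ from Lemma 5.2. Combining Lemmas 5.3 and 5.6 gives ${\cal P}_i = {\cal R}_i \subseteq {\cal G}_i$, and Lemma 5.8 gives ${\cal P}_i \subseteq M({\cal B}_i)$, leaving the two reverse inclusions ${\cal G}_i \subseteq {\cal P}_i$ and $M({\cal B}_i) \subseteq {\cal P}_i$. By Lemma 5.2 these both follow once ${\cal G}_i$ and $M({\cal B}_i)$ are themselves shown to be normal families, since no normal family properly extends the maximal ${\cal P}_i$.

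For ${\cal G}_i \subseteq {\cal P}_i$, I would induct on the rank of a sphere, paralleling Lemma 4.2 in the tetrahedral case. The base set ${\cal S}_0 = \{S_{0,1}, S_{1,1}, S_{1,0}\}$ lies in ${\cal P}_i$. For the inductive step, three mutually tangent Ford spheres of the form $S[U], S[U+V], S[V]$ (with $U=(\alpha,\beta)$, $V=(\gamma,\delta)$ pairs of coprime Gaussian integers and $|\alpha\delta-\beta\gamma|=1$) extend octahedrally to the three spheres $S[U+\rho V]$, $S[U+\rho(U+V)]$, $S[U+V+\rho V]$ for $\rho=\pm i$, as identified in the proof of Lemma 5.6. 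A short determinant computation against $(\alpha,\beta)$ (and, where convenient, $(\gamma,\delta)$) yields a unit in each case, so each new pair is again coprime in ${\Bbb Z}[i]$ and the new spheres are Ford spheres. Hence ${\cal G}_i \subseteq {\cal P}_i$, and the normality of ${\cal G}_i$ needed for the maximality appeal is inherited from ${\cal P}_i$.

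For $M({\cal B}_i) \subseteq {\cal P}_i$, since $M$ is a Poincar\'e extension it preserves tangency and interior-disjointness, so it is enough to show ${\cal B}_i$ is normal. Attach to each barycentric representative $\langle a+m/\sqrt 3, b+m/\sqrt 3, c+m/\sqrt 3\rangle \in {\cal B}_i$ the vector $V(a,b,c) \in {\cal U}$ built in Lemma 5.8, and invoke Theorem 3.4: interior-disjointness is equivalent to $Q(V(a,b,c), V(A,B,C)) \geq 1$. From the explicit formula (10) together with the Descartes integrality $\sqrt{ab+ac+bc}, \sqrt{AB+AC+BC} \in {\Bbb Z}$, the form $Q$ is integer valued on such pairs, reducing the problem to ruling out $Q \leq 0$ between distinct members.

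The main obstacle I anticipate is precisely this exclusion of nonpositive $Q$. Integrality alone leaves open the geometric possibility of overlapping interiors, and some further structural argument is needed. My preferred route is to construct an analog of the generalized slow Euclidean algorithm of Section 4, this time for equation (8): a descent on Descartes quadruples that strictly decreases a positive integer quantity (e.g.\ $a+b+c+d$ or $|d|$) via moves that preserve (8) and correspond to the octahedral completion of Lemma 5.5. Reversing this descent from a basis vector realizes any given element of $M({\cal B}_i)$ as a finite octahedral extension of ${\cal S}_0$, placing $M({\cal B}_i) \subseteq {\cal G}_i$ and hence, by the first part, inside ${\cal P}_i$. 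Designing this octahedral descent and verifying that its steps match the construction of ${\cal G}_i$ is where I expect the bulk of the work to lie.
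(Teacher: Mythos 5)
Your overall skeleton is the paper's: Theorem 5.9 is obtained there by chaining Lemma 5.3 (${\cal P}_i={\cal R}_i$), Lemma 5.6 (${\cal R}_i\subset{\cal G}_i$) and Lemma 5.8 (${\cal P}_i\subset M({\cal B}_i)$) with the maximality of ${\cal P}_i$ (Lemma 5.2), and your rank induction for ${\cal G}_i\subseteq{\cal P}_i$ is exactly the content already sitting inside Lemmas 5.5 and 5.6 (the only octahedral completions of a Ford triple $S[U],S[U+V],S[V]$ are the spheres $S[U+\rho V]$, $S[U+\rho(U+V)]$, $S[U+V+\rho V]$ with $\rho=\pm i$, which a unit-determinant check shows are again Ford spheres). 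Up to that point your proposal is sound and matches the intended argument.

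The gap is in your treatment of $M({\cal B}_i)\subseteq{\cal P}_i$. You correctly reduce it to normality of ${\cal B}_i$ and to the integrality of $Q$ coming from (10), but you then treat the exclusion of $Q\le 0$ as an open structural problem and propose to resolve it by inventing a gSEA-style descent on Descartes quadruples --- a descent whose moves you never define, whose termination you never establish, and whose match with the octahedral construction you never verify; as written the proof simply stops at the step you yourself call the bulk of the work. Moreover, no such machinery is needed, and this is precisely where your route diverges from (and falls short of) the paper's: by the Remark following Theorem 3.4 --- equivalently, by running Lemma 5.7 with arbitrary \emph{complex} parameters, which is how the paper argues the general case in the proof of Theorem 6.3 via equations (12) and (13) --- any element of ${\cal B}_i$ is $M^{-1}(S_{x,y})$ for some complex $x,y$, and for two such elements the quantity in (10) equals $|xv-yu|^2\ge 0$. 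Being a nonnegative integer, it is at least $1$ unless $xv=yu$, i.e., unless the two spheres share a tangent point; and that degenerate case cannot occur, since Lemma 5.7's formulas would then force $(a',b',c',m')=|\lambda|^2(a,b,c,m)$ for some scalar $\lambda$, and two relatively prime integer triples that are positive rational multiples of one another must coincide. Hence ${\cal B}_i$ (and so $M({\cal B}_i)$) is a normal family, and maximality of ${\cal P}_i$ finishes the proof without any new descent. I would also note that your worry that $Q$ might be negative signals that you missed this geometric identification of $Q$ with $|xv-yu|^2$, which is the one tool the paper supplies exactly for this purpose.
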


\begin{corollary}  For a Descartes quadruple $(a,b,c,d)$, 
$$a+b+c=|\gamma|^2+|\delta|^2$$
 for some $\gamma,\delta\in {\Bbb Z}[\omega]$, and
 $$ a+b=m^2+n^2$$
 for some $m,n\in{\Bbb Z}$.  \end{corollary}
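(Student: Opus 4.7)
The plan is to pull the Descartes quadruple $(a,b,c,d)$ back into the Gaussian integer picture provided by Theorem 5.9 and read off $a,b,c$ in terms of the real and imaginary parts of a pair of Gaussian integers using Lemma 5.7; the two sum-of-squares decompositions will then drop out from direct algebra. Concretely, since $(a,b,c,d)\in{\cal D}$ forces $(a,b,c)\in{\cal S}$, the quantity $m:=\sqrt{ab+ac+bc}$ is an integer. Choosing the sign of $m$ (and, if necessary, relabeling by the symmetry of equation (8)) so that $a+b+c+\sqrt{3}\,m>0$, the sphere $\langle a+m/\sqrt{3},b+m/\sqrt{3},c+m/\sqrt{3}\rangle$ lies in ${\cal B}_i$, so by Theorem 5.9 its image under $M$ is a Ford sphere $S_{\alpha,\beta}$ for some coprime $\alpha,\beta\in{\Bbb Z}[i]$.

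Running Lemma 5.7 in the reverse direction on $S_{\alpha,\beta}$ yields a second barycentric expression $M^{-1}(S_{\alpha,\beta})=\langle a'+m'/\sqrt{3},b'+m'/\sqrt{3},c'+m'/\sqrt{3}\rangle$ with
$$a'=|\beta|^2+\Im(\overline{\alpha}\beta),\quad b'=|\alpha|^2+\Im(\overline{\alpha}\beta),\quad c'=-\Im(\overline{\alpha}\beta),\quad m'=\Re(\overline{\alpha}\beta),$$
all of which are integers. Since the barycentric representation of a normal sphere is unique and $\sqrt{3}$ is irrational, matching the two representations coordinate by coordinate forces $a=a'$, $b=b'$, $c=c'$ (and $m=m'$). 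Writing $\alpha=p+qi$ and $\beta=r+si$ with $p,q,r,s\in{\Bbb Z}$, one therefore has $a=r^2+s^2+(ps-qr)$, $b=p^2+q^2+(ps-qr)$, and $c=-(ps-qr)$.

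The remainder is pure algebra. Using the Eisenstein-norm identity $|x+y\omega|^2=x^2-xy+y^2$, one computes
\begin{align*}
a+b+c &= p^2+q^2+r^2+s^2+(ps-qr) \\
      &= (p^2+ps+s^2)+(q^2-qr+r^2) = |p-s\omega|^2+|q+r\omega|^2,\\
a+b   &= p^2+q^2+r^2+s^2+2(ps-qr) = (p+s)^2+(q-r)^2,
\end{align*}
delivering both claims with $\gamma=p-s\omega$, $\delta=q+r\omega$ and $(m,n)=(p+s,q-r)$. The main obstacle is the uniqueness step in the second paragraph: one must extract the Gaussian-integer data $(p,q,r,s)$ from a barycentric triple in which it appears only through $|\alpha|^2$, $|\beta|^2$ and $\overline{\alpha}\beta$, and the separation into rational and $\sqrt{3}$-parts rests on irrationality of $\sqrt{3}$ rather than anything intrinsic to the algebraic setup. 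Once that hurdle is cleared, the two decompositions are just pattern-matching against the Eisenstein norm form.
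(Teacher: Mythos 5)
Your proposal is correct and follows essentially the same route as the paper: identify $M(\langle a+m/\sqrt3,\,b+m/\sqrt3,\,c+m/\sqrt3\rangle)$ with a Ford sphere $S_{\alpha,\beta}$ via Theorem 5.9, read off $a=|\beta|^2+\Im(\overline\alpha\beta)$, $b=|\alpha|^2+\Im(\overline\alpha\beta)$, $c=-\Im(\overline\alpha\beta)$ from Lemma 5.7, and then perform exactly the same algebraic regrouping into Eisenstein norms and a sum of two squares. The only difference is that you spell out the sign choice for $m$ and the rational-versus-$\sqrt3$ matching step that the paper leaves implicit, which is a harmless (indeed welcome) elaboration.
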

 
 \begin{proof} 
 Using the proof above,
 $$\begin{aligned} &a+b+c=|\alpha|^2+|\beta|^2+\Im(\overline\alpha\beta)\\&=(a_1^2+a_1b_2+b_2^2)+(a_2^2-a_2b_1+b_1^2)=|a_1-b_2\omega|^2+|a_2+b_1\omega|^2.\end{aligned}$$
 
 Given Descartes triple $(a,b,c)$,  let $S(z,r)=M(\langle a+m/\sqrt 3, b+m/\sqrt 3, c+m/\sqrt 3\rangle)$ where $M$ is as above and $m^2=ab+ac+bc$.  Then $z$ is a Gaussian rational and there exist relatively prime Gaussian integers $\alpha, \beta$ with $z=\alpha/\beta$.    Then $a=|\beta|^2+\Im(\overline\alpha\beta)$,  $b=|\alpha|^2+\Im(\overline\alpha\beta)$, and so 
 $$a+b=|\alpha|^2+|\beta|^2+2\Im(\overline\alpha\beta)=(a_1+b_2)^2+(a_2-b_1)^2.$$ \end{proof}


The close relation between continued fractions and Ford circles has long been noted (see \cite{Se} for a thorough study of the relation between continued fractions and the dual of Ford circles).    Higher dimensional analogues have also been studied by many authors (generally by extending the compact interval to a triangular region).   Another option, not previously studied, is to extend the theory of continued fractions to the ``Sierpinski gasket",  a natural generalization of the closed interval.   In attempting this, one may try to generalize Ford circles to spheres on the Sierpinski gasket.  

A compact interval can be represented 
$$[P_1,P_2]=\left\{\sum_j 2^{-j}P_{f(j)}: f\in\{1,2\}^{\Bbb N}\right\}.$$
This generalizes easily to any dimension;   in dimension 3, we have the ``Sierpinski gasket"
$${\bf SG}:=\left\{\sum_j 2^{-j}P_{f(j)}: f\in\{1,2,3\}^{\Bbb N}\right\}$$
where $P_1, P_2, P_3$ are complex numbers forming the vertices of an equilateral triangle.  

\begin{figure}[htbp] 
   \centering{
   \includegraphics[ width=2.2 in]{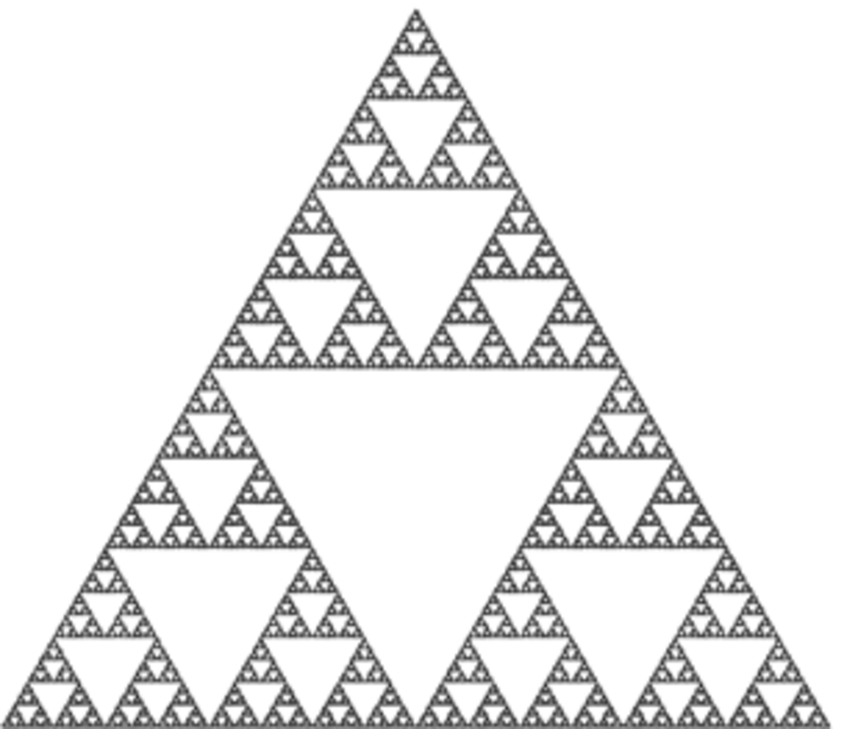} 
    \includegraphics[ width=2.2 in]{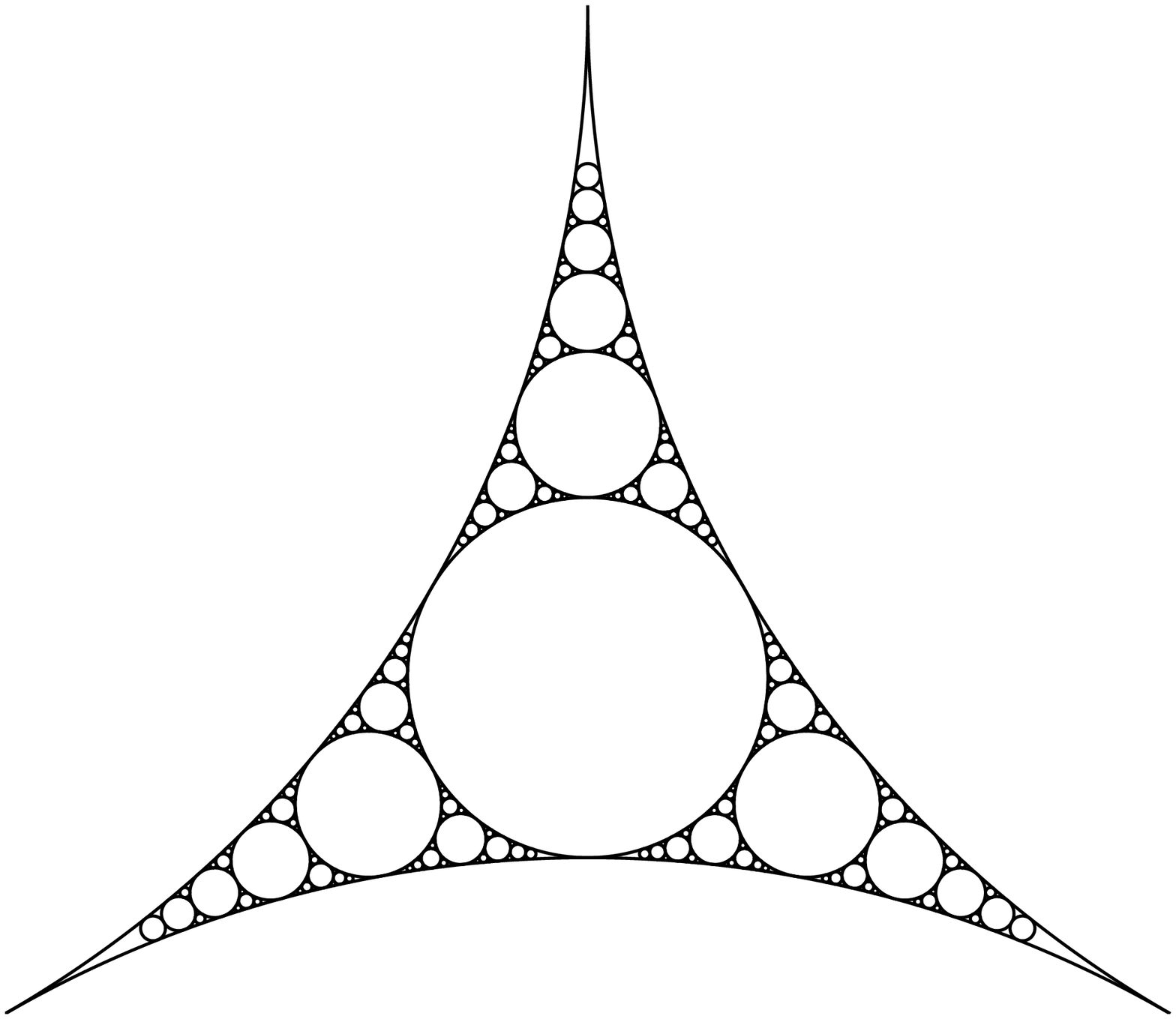} }
   \caption{Sierpinski Gasket {\bf SG} and Apollonian Circle Packing {\bf CP}}
   \label{fig. 7}
\end{figure}

The Sierpinski gasket is obviously homeomorphic to part of an Apollonian circle packing.
 The two figures in Figure 7 are called \emph{fractals} because their
Hausdorff dimension is fractional. Roughly speaking,  since {\bf SG} is made up of three copies of itself, with each copy having
length and width half as big, the dimension of {\bf SG} is the solution of $2^d=3$ (namely $d=\ln 3/\ln 2$).
However, Hausdorff dimension is not a
topological invariant and, indeed, the dimension of {\bf CP} has been found to be $1.305688 \pm 10^{-6}$ by McMullen \cite{McM}
but is not known with complete precision (see Graham et al. [16]).

A {\it local cut point} $x$ in {\bf SG} or in {\bf CP} is an element for which there exists a connected neighborhood $U$ of $x$ for which $U-\{x\}$ is disconnected.   In {\bf SG}, such points are of the form
$\sum_j 2^{-j}P_{f(j)}$ where the corresponding function $f(j)$ is constant for all sufficiently large $j$ or, equivalently, the point can be represented by more than one $f$.

The author's original motivation for this study was the extension of continued fractions on an interval (and their interpretation in terms of Ford circles) to a development of continued fractions on {\bf SG} where, it was hoped, that a type of ``Ford sphere" could be attached to each local cut point.   It is indeed possible for ${\bf CP}$.

Given three mutually tangent circles $C_1, C_2, C_3$ in ${\Bbb C}$ with respective  tangency points $\{w_{ij}\}:=C_i\cap C_j$,  there is a unique set of three spheres $S_{12}, S_{13}, S_{23}$ that are tangent to ${\Bbb C}$ at $w_{12}, w_{13}, w_{23}$ respectively.   By Lemma 2 of \cite{ComplexDCT}, if $c_i$ is the curvature $C_i$ for $i=1,2,3$,  then the curvature of $S_{ij}$ equals $c_i+c_j$.  

By Theorem 5.4, it is clear that it is possible to assign a sphere to each local cut point of the Seirpinski gasket and, in general, to each tangency point of a ``weak" circle packing such as
an Apollonian super-packing (see \cite{GLMWYii}).   The (0,0,1,1) super-packing is pictured below (from Figure 4 of \cite{GLMWYii}).  

\begin{figure}[htbp] 
   \centering
   \includegraphics[ width=3.5 in]{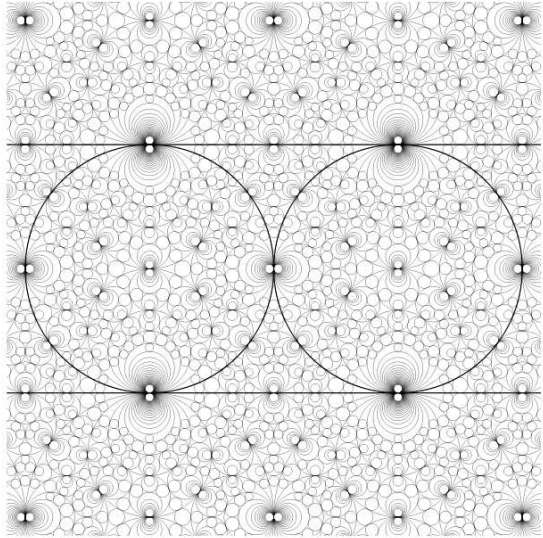} 
   \caption{Part of the (0,0,1,1) super-packing}
   \label{fig. 7}
\end{figure}

\section{Ford spheres:  general case}

A complex quadratic number field is a field of the form $F(D):={\Bbb Q}(\sqrt{-D})$.   The set of algebraic integers of $F(D)$ is  ${\cal O}(D):={\cal O}_{F(D)}={\Bbb Z}[\sigma]$ where
$\sigma=(1+\sqrt{-D})/2$ if $D\equiv 3 \pmod 4$ or $\sigma=\sqrt{-D}$ if $D\equiv 1,2 \pmod 4$ (see, for example, \cite{IR}, \cite{L}).  It follows that
$$|m+n\sigma|^2=\begin{cases} m^2+mn+\frac{D+1}4 n^2 \text{ if } D\equiv 3\pmod 4\\
m^2+Dn^2\text{ if } D\equiv 1,2\pmod 4\end{cases}$$
is, in all cases, an integer.

From a celebrated theorem of Heegner, Stark, and Baker, for $D>0$, ${\cal O}(D)$ is a principal ideal domain (equivalently, a unique factorization domain) if and only if $D$ is one of the ``Heegner numbers" $\{1,2,3,7,11,19,43,67,163\}$.   Further, it is known that ${\cal O}(D)$ is a Euclidean domain (equivalent, in this case to norm-Euclidean) if and only if $D$ is one of the first five Heegner numbers;   i.e., $D\in\{1,2,3,7,11\}$ (see  \cite{L}).    Gauss first posed the question, open to this day, of whether there are infinitely many \emph{ real } quadratic number fields (where $D<0$) with class number 1.   See, for example, \cite{L} for a nice account of Euclidean domains and \cite{IR} for a more general account of quadratic number fields.

For $D\in \{1,2,3,7,11,19,43,67,163\}$, let $\sigma$ be such that ${\cal O}(D)={\Bbb Z}[\sigma]$   
and define Ford spheres in this case to be
$${\cal P}_{\sigma}:=\{S_{\alpha,\beta}:  \alpha,\beta\in{\Bbb Z}[\sigma],  \alpha\perp\beta\}$$
 which thus forms an array of spheres with non-overlapping interiors and such that every sphere is tangent to many others.  

As for Ford circles and the Ford spheres of Sections 2 and 5, we shall show that ${\cal P}_{\sigma}$ is maximal.

\begin{lemma}  For all $z\in{\Bbb C}$, 
$$\inf\{|\beta z-\alpha|: \alpha,\beta\in{\Bbb Z}[\sigma]\}=0.$$ \end{lemma}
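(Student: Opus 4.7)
The plan is to mimic the proof of Lemma 5.1, replacing the unit square (the fundamental domain of ${\Bbb Z}[i]$ in ${\Bbb C}$) by a fundamental parallelogram for the rank-two lattice ${\Bbb Z}[\sigma] \subset {\Bbb C}$. The statement is trivial for $z \in {\Bbb Q}(\sigma)$: writing $z = \alpha/\beta$ with $\beta \ne 0$ gives $|\beta z - \alpha| = 0$. So the interesting case is $z \notin {\Bbb Q}(\sigma)$, which I would handle by a Dirichlet-style pigeonhole argument.

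First I would fix the fundamental parallelogram
$$R := \{s + t\sigma : 0 \le s, t < 1\},$$
let $d := \operatorname{diam}(R) < \infty$, and define the corresponding fractional-part function $\{\cdot\}:{\Bbb C} \to R$, with $w - \{w\} \in {\Bbb Z}[\sigma]$ for all $w$. For any positive integer $N$, partition $R$ into $N^2$ congruent parallelogram cells, each of diameter at most $d/N$. Then I would apply the pigeonhole principle to the $(N+1)^2$ points $\{\beta z\}$ indexed by $\beta = m + n\sigma$ with $0 \le m, n \le N$: two of these must lie in a common cell, say coming from distinct $\beta_1 \ne \beta_2$ with corresponding $\alpha_1, \alpha_2 \in {\Bbb Z}[\sigma]$, yielding
$$\bigl|(\beta_1 - \beta_2) z - (\alpha_1 - \alpha_2)\bigr| \;=\; \bigl|\{\beta_1 z\} - \{\beta_2 z\}\bigr| \;\le\; \frac{d}{N}.$$
Setting $\beta := \beta_1 - \beta_2 \ne 0$ and $\alpha := \alpha_1 - \alpha_2$ and letting $N \to \infty$ gives the claim.

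The main obstacle, such as it is, amounts to bookkeeping the geometry of the fundamental domain, which depends on whether $D \equiv 1,2 \pmod 4$ or $D \equiv 3 \pmod 4$; but because ${\Bbb Z}[\sigma]$ is always a full rank-two lattice in ${\Bbb C}$, its fundamental parallelogram has finite diameter $d$ and the pigeonhole step is uniform in $D$. No arithmetic properties of ${\Bbb Z}[\sigma]$ are used—neither the Euclidean property, nor unique factorization, nor the specific form of the norm—so in fact the same lemma holds for any order in any imaginary quadratic field, not only the Heegner numbers.
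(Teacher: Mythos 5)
Your proof is correct and follows essentially the same route as the paper: a Dirichlet pigeonhole argument using the fractional-part map into a fundamental parallelogram of the lattice ${\Bbb Z}[\sigma]$ subdivided into $N^2$ congruent cells, with the trivial case $z\in{\Bbb Q}(\sigma)$ handled separately. The only cosmetic difference is your choice of index set (all $\beta=m+n\sigma$ with $0\le m,n\le N$, as in the paper's Gaussian case, rather than the integer multiples $jz$, $j=1,\dots,n^2+1$, used in the paper's own proof of this lemma), which changes nothing essential.
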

\begin{proof}    We may then define a ``floor function" for ${\Bbb C}$:  
$$\lfloor x+\sigma y\rfloor:=\lfloor x \rfloor+ \sigma\lfloor y \rfloor$$
and a corresponding ``fractional part" function
$$\{x+\sigma y\}:=\{x\}+\sigma \{y\}.$$
Given positive integer $N$, divide the parallelogram $R:=\{x+y\sigma:  x,y\in[0,1)\}$ into $N^2$ disjoint congruent parallelograms similar to $R$.    

Clearly, the lemma holds for $z\in {\Bbb Q}(\sigma)$.  Fix $z\in{\Bbb C}-{\Bbb Q}(\sigma)$.  For distinct integers $j$ and $k$,  $\{jz\}\neq \{kz\}$ since, otherwise, $(j-k)z\in {\Bbb Z}[\sigma]$.   Hence, for $j=1,...,n^2+1$, the numbers $\{jz\}$ are distinct and so, by the pigeonhole principle, one of the little parallelograms contains distinct $\{jz\}$ and $\{kz\}$.   
Hence, for some $\alpha_j,\alpha_k\in{\Bbb Z}[\sigma]$, 
$$|(jz-\alpha_j)-(kz-\alpha_k)|=|\{jz\}-\{kz\}|<diam(R)/n$$
and the lemma follows.   \end{proof}

\begin{lemma}  ${\cal P}_{\sigma}$ is maximal. \end{lemma}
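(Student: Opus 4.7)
The plan is to mimic closely the proofs of Proposition 2.1 and Lemma 5.2, replacing $\mathbb{Z}[i]$ (or $\mathbb{Z}$) by $\mathbb{Z}[\sigma]$ throughout. There are two things to check: (a) no sphere of $\mathcal{P}_\sigma$ can be enlarged while preserving normality of the family, and (b) no normal sphere tangent to $\mathbb{C}$ at a point outside $\mathbb{Q}(\sigma)$ can be inserted. Only (b) requires the density lemma (Lemma 6.1); part (a) is purely algebraic.

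For (a), let $S_{\alpha,\beta}, S_{\gamma,\delta}\in\mathcal{P}_\sigma$. Because $\alpha\delta-\beta\gamma\in\mathbb{Z}[\sigma]$ is nonzero whenever $\alpha/\beta\neq\gamma/\delta$, its norm squared $|\alpha\delta-\beta\gamma|^2$ is a positive integer, so $|\alpha\delta-\beta\gamma|\geq 1$; by equation (3) this shows that distinct Ford spheres have disjoint interiors. Conversely, since $D$ is a Heegner number, $\mathbb{Z}[\sigma]$ is a principal ideal domain, so $\alpha\perp\beta$ yields $\gamma,\delta\in\mathbb{Z}[\sigma]$ with $\alpha\delta-\beta\gamma$ equal to a unit, and any unit has norm $1$; hence $S_{\alpha,\beta}\parallel S_{\gamma,\delta}$. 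The existence of this tangent neighbor forbids enlarging $S_{\alpha,\beta}$ while keeping disjoint interiors.

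For (b), suppose for contradiction that $S(z,r)$ has interior disjoint from every $S_{\alpha,\beta}\in\mathcal{P}_\sigma$. Tangency at a rational point of $\mathbb{Q}(\sigma)$ is already covered by (a), so we may assume $z\notin\mathbb{Q}(\sigma)$. By Lemma 6.1 the set $\{\beta z-\alpha:\alpha,\beta\in\mathbb{Z}[\sigma]\}$ has infimum $0$, and by rescaling we may in fact choose $\alpha,\beta\in\mathbb{Z}[\sigma]$ with $\beta\neq 0$ and $|\beta z-\alpha|<\sqrt{2r}$, equivalently
\[
\left|z-\frac{\alpha}{\beta}\right|<\frac{\sqrt{2r}}{|\beta|}.
\]
Squaring gives $|z-\alpha/\beta|^2<2r\cdot\frac{1}{|\beta|^2}=4r\cdot\frac{1}{2|\beta|^2}$, which by the tangency criterion (3) shows that $S(z,r)$ meets $S_{\alpha,\beta}$ with overlapping interiors, contradicting our assumption. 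After replacing $\alpha,\beta$ by $\alpha/\gcd(\alpha,\beta)$ and $\beta/\gcd(\alpha,\beta)$ (again using that $\mathbb{Z}[\sigma]$ is a PID, so gcds exist), we may assume $\alpha\perp\beta$, so $S_{\alpha,\beta}\in\mathcal{P}_\sigma$.

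The only subtle point is the step in (b) of exhibiting $\beta$ with $|\beta z-\alpha|<\sqrt{2r}$ (rather than merely arbitrarily small); this requires reading Lemma 6.1 as a density statement, not just an infimum statement, which is how its pigeonhole proof actually delivers it. Everything else is routine once we acknowledge the PID hypothesis supplied by $D$ being Heegner.
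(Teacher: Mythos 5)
Your proof is correct and follows essentially the same route as the paper: part (a) is the paper's observation that $|\alpha\delta-\beta\gamma|\ge 1$ forces disjoint interiors while Bezout in the PID $\mathbb{Z}[\sigma]$ produces a tangent neighbor, and part (b) is the paper's application of Lemma 6.1 to contradict the existence of an insertable sphere via the tangency criterion (3). Your explicit reduction to $\alpha\perp\beta$ by dividing out a gcd, and your remark that the pigeonhole proof of Lemma 6.1 supplies a nonzero $\beta$ with $|\beta z-\alpha|<\sqrt{2r}$, are small tightenings of details the paper leaves implicit.
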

\begin{proof}  
Note that any two Ford spheres $S_{\alpha,\beta},S_{\gamma,\delta}$ have disjoint interiors (since $|\alpha\delta-\beta\gamma|\ge 1$).  Furthermore, since $\alpha\perp\beta$, there exist $\gamma,\delta$ such that $|\alpha\delta-\beta\gamma|= 1$ and so  $S_{\alpha,\beta}||S_{\gamma,\delta}$.   Hence no Ford sphere can be enlarged.   

     Suppose there exist $z,r$ so that  for all $a,b$, $S(z,r)^{\circ}\cap S_{\alpha,\beta}= \emptyset$.  Obviously $z\not\in{\Bbb Q}(\sigma)$  and thus, by the previous lemma, the set $\{\beta z-\alpha: \alpha\in{\Bbb Z}[\sigma], \beta\in{\Bbb Z}[\sigma]^+ \}$ is dense in ${\Bbb C}$  and so, for any fixed $r$, there exist $\alpha,\beta$ such that 
$$\left|z-\frac {\alpha}{\beta}\right|<\frac{\sqrt{2r}}{|\beta|}.$$
This implies $S(z,r)^{\circ}\cap S_{\alpha,\beta}^{\circ}\neq\emptyset$, a contradiction.  \end{proof}

A barycentric representation of these spheres are based on solutions of the equation:
$$\begin{cases} 
ab+ac+bc+(a+b+c)m=\frac{D-3}4 m^2 &\text{ if $D\equiv 3 \pmod 4$}\\
ab+ac+bc=Dm^2 &\text{ otherwise.}
\end{cases} \eqno{(11)}$$
Let ${\cal B}_{\sigma}:=\{\langle a+m\xi, b+m\xi, c+m\xi\rangle:  (a,b,c)\in{\Bbb Z}_{\perp}^3 \text{ is a solution of (11)}\}$
where 
$$\xi=\begin{cases} &(\sqrt 3-\sqrt D)/\sqrt{12} \text{ if $D\equiv 3 \pmod 4$}\\
&\sqrt D/\sqrt 3\text{ otherwise.} \end{cases}$$
Let $\mu(z):=\begin{pmatrix}
\omega&0\\
\omega&1\\
\end{pmatrix} (z)$ be the M\"obius transformation taking the points $0,1,\infty$ to $0, 1+\omega, 1$ respectively.

\begin{theorem}  $\mu(P_{\sigma})={\cal B}_{\sigma}$.  \end{theorem}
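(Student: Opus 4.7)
My plan is to prove the two inclusions $\mu({\cal P}_\sigma) \subseteq {\cal B}_\sigma$ and ${\cal B}_\sigma \subseteq \mu({\cal P}_\sigma)$ separately, adapting the argument of Lemmas 5.7--5.8 and the proof of Theorem 5.9 from the Gaussian case to the general ring ${\Bbb Z}[\sigma]$.

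For the forward inclusion, take $S_{\alpha,\beta} \in {\cal P}_\sigma$ with $\alpha = x + y\sigma$ and $\beta = u + v\sigma$ ($x,y,u,v \in {\Bbb Z}$). The matrix defining $\mu$ has determinant $\omega$ of unit norm, so Proposition 3.2 gives $\mu(S_{\alpha,\beta}) = S_{\omega\alpha,\,\omega\alpha+\beta}$. Setting the center $\omega\alpha/(\omega\alpha+\beta)$ and radius $1/(2|\omega\alpha+\beta|^2)$ of this sphere equal to those of a barycentric sphere $\langle A,B,C\rangle$ and solving expresses $A, B, C$ as ${\Bbb Q}$-linear combinations of $1$, $|\alpha|^2$, $|\beta|^2$, $\Re(\overline\alpha\beta)$, $\Im(\overline\alpha\beta)$. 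The only irrational pieces enter through $\Im(\overline\alpha\beta)$ as integer multiples of $\sqrt D$; introducing $m := yu - xv \in {\Bbb Z}$ and the shift $\xi$ of the theorem absorbs these terms exactly, leaving explicit integer polynomials $a, b, c$ in $x, y, u, v$ (involving $|\sigma|^2$, which equals $(D+1)/4$ or $D$ according to $D \bmod 4$) with $A = a + m\xi$, $B = b + m\xi$, $C = c + m\xi$.

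A direct polynomial computation in $x, y, u, v$ then verifies equation (11). Relative primality of $(a,b,c)$ uses $\alpha \perp \beta$ in the PID ${\Bbb Z}[\sigma]$ to produce $\gamma, \delta$ with $|\alpha\delta - \beta\gamma| = 1$; tangency of $S_{\alpha,\beta}$ with $S_{\gamma,\delta}$ transfers under $\mu$ to tangency of their barycentric images, and the bilinear form $Q$ of Theorem 3.4 applied to the associated quadruples yields an integer identity expressing $1$ as an integer combination of $a, b, c, m$ (together with their counterparts for $\gamma, \delta$), forcing $\gcd(a,b,c) = 1$ and hence $\mu(S_{\alpha,\beta}) \in {\cal B}_\sigma$.

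For the reverse inclusion, I would appeal to maximality. Lemma 6.2 gives maximality of ${\cal P}_\sigma$, and since $\mu$ is a bijection of normal spheres preserving tangency and disjoint interiors (via Proposition 3.2), $\mu({\cal P}_\sigma)$ is also a maximal normal family. It then suffices to check that ${\cal B}_\sigma$ is itself a normal family, which reduces to a $Q$-type inequality on pairs of integer solutions of (11), analogous to the $|\alpha\delta - \beta\gamma|^2 \geq 1$ step of Sections 4 and 5. The principal obstacle is the $D$-dependent polynomial identity (11) itself: the shift $\xi$ must absorb every $\sqrt D$ term, and the residue must land precisely on $(D-3)m^2/4$ for $D \equiv 3 \pmod 4$ and on $Dm^2$ for $D \equiv 1, 2 \pmod 4$ --- a delicate match dictated entirely by the value of $|\sigma|^2$, routine to carry out but easy to misexecute if the bookkeeping of integer shifts and irrational residues is not kept strictly separate.
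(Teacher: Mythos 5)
Your proposal follows essentially the same route as the paper: compute $\mu(S_{\alpha,\beta})=S_{\omega\alpha,\omega\alpha+\beta}$ via Proposition 3.2, absorb the $\sqrt D$-terms into the shift $m\xi$ to get integer data satisfying (11) (the paper packages this as Lemma 6.4), establish coprimality from $|\alpha\delta-\beta\gamma|=1$ through a bilinear identity (the paper's Lemma 6.5, equivalent to your use of $Q$), and obtain the reverse inclusion from maximality of ${\cal P}_\sigma$ together with normality of ${\cal B}_\sigma$ via that same identity. The argument is correct, with only the sign/bookkeeping of $m$ in the two congruence classes of $D$ left to execute, exactly as you flag.
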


To prove this, we first prove several lemmas.   

\begin{lemma} If $\overline\alpha\beta=s+it$ then  
$$\mu(S_{\alpha,\beta})=\langle a+m/\sqrt 3, b+m/\sqrt 3, c+m/\sqrt 3\rangle$$
where $a=|\beta|^2-s$,  $b=|\alpha|^2-s$, $c=s$, and $ab+ac+bc=m^2=t^2$.   \end{lemma}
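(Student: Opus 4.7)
The proof is a direct computation in two parts: first we evaluate $\mu(S_{\alpha,\beta})$ explicitly, then we recognize the result as the claimed barycentric sphere and verify the identity $ab+ac+bc=t^2$.

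First I would apply Proposition 3.2 to the matrix $\bigl(\begin{smallmatrix}\omega&0\\\omega&1\end{smallmatrix}\bigr)$, whose determinant is $\omega$ and hence satisfies $\Delta = \sqrt{|\omega|} = 1$. This gives immediately
$$\mu(S_{\alpha,\beta}) = S_{\omega\alpha,\ \omega\alpha+\beta} = S\!\left(\tfrac{\omega\alpha}{\omega\alpha+\beta},\ \tfrac{1}{2|\omega\alpha+\beta|^2}\right).$$
On the target side, by the definition of the barycentric notation,
$\langle A,B,C\rangle$ has center $(B+C(1+\omega))/(A+B+C)$ and radius $1/2(A+B+C)$.
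So the lemma reduces to checking two identities: $A+B+C = |\omega\alpha+\beta|^2$ (matching radii) and $\omega\alpha\,\overline{(\omega\alpha+\beta)} = B+C(1+\omega)$ (matching centers, after clearing the denominator).

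Next I would expand both of these using $\overline\alpha\beta = s+it$. Since $\omega\overline\omega=1$, one gets $|\omega\alpha+\beta|^2 = |\alpha|^2 + |\beta|^2 + 2\Re(\omega\alpha\overline\beta)$; writing $\omega=(-1+i\sqrt3)/2$ and $\alpha\overline\beta=s-it$ yields $2\Re(\omega\alpha\overline\beta) = -s+t\sqrt3$, hence
$$|\omega\alpha+\beta|^2 = |\alpha|^2+|\beta|^2-s+t\sqrt3.$$
With the candidate $a=|\beta|^2-s$, $b=|\alpha|^2-s$, $c=s$, $m=t$, the sum $A+B+C = a+b+c+m\sqrt3$ collapses to exactly the right side, so the radius matches. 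For the center, I would compute $\omega\alpha\,\overline{(\omega\alpha+\beta)} = |\alpha|^2 + \omega\alpha\overline\beta$ and separate into real and imaginary parts; comparing against $B+C(1+\omega) = (B+C/2) + iC\sqrt3/2$ gives $C = s+t/\sqrt3$ from the imaginary part, then $B = |\alpha|^2-s+t/\sqrt3$ from the real part, and $A$ is forced by the sum identity already verified. All three match the claimed values.

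Finally, for the identity $ab+ac+bc=t^2$: the key algebraic observation is that with the specific choice $a=|\beta|^2-s$, $b=|\alpha|^2-s$, $c=s$, direct expansion yields
$$ab+ac+bc = |\alpha|^2|\beta|^2 - s^2.$$
But $|\alpha|^2|\beta|^2 = |\overline\alpha\beta|^2 = s^2+t^2$, so the quantity equals $t^2 = m^2$.

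The argument is thus essentially a matching of two representations of the same sphere, with no real obstacle beyond careful bookkeeping. The only subtlety is the denominator clearing when comparing centers: one must be attentive that the equality $\mu$ produces, namely $\omega\alpha/(\omega\alpha+\beta)$, should be multiplied top and bottom by $\overline{\omega\alpha+\beta}$ so that the numerator can be separated into real and imaginary pieces of the form $B + C/2$ and $C\sqrt3/2$, which is where the $1/\sqrt3$ shifts in the barycentric coordinates come from. Everything else is routine algebra of Eisenstein numbers together with the relation $|\alpha|^2|\beta|^2 = |\overline\alpha\beta|^2$.
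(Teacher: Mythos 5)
Your proposal is correct and follows essentially the same route as the paper: apply Proposition 3.2 to get $\mu(S_{\alpha,\beta})=S_{\omega\alpha,\omega\alpha+\beta}$, match the radius via $a+b+c+m\sqrt3=|\omega\alpha+\beta|^2$ and the center via $\omega\alpha\overline{(\omega\alpha+\beta)}=|\alpha|^2+\omega\alpha\overline\beta=(b+m/\sqrt3)+(c+m/\sqrt3)(1+\omega)$, and verify $ab+ac+bc=|\alpha|^2|\beta|^2-s^2=t^2$. The bookkeeping (including the $\Delta=1$ observation and the conjugate-clearing of the denominator) matches the paper's computation.
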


\begin{proof}  Let $a=|\beta|^2-s$,  $b=|\alpha|^2-s$, $c=s$, and $m=t$.  
Then 
$$ab+ac+bc=|\overline\alpha\beta|^2-s(|\alpha|^2+|\beta|^2-2s)=|\overline\alpha\beta|^2-s^2=t^2=m^2.$$
Note that 
$$\omega\alpha\overline\beta=-s/2+\sqrt 3 t/2+i(\sqrt 3s/2+t/2)$$
and thus 
$$\begin{aligned}&(a+m/\sqrt 3)+(b+m/\sqrt 3)+(c+m/\sqrt 3)=|\alpha|^2+|\beta|^2-s+t\sqrt3\\&=|\alpha|^2+|\beta|^2+2\Re(\omega\alpha\overline\beta)=|\omega\alpha+\beta|^2,\end{aligned}$$
and
$$\begin{aligned}&(b+m/\sqrt 3)+(c+m/\sqrt 3)(1+\omega)=|\alpha|^2-s/2+\sqrt 3 t/2+i(\sqrt 3s/2+t/2)\\&=|\alpha|^2+\omega\alpha\overline\beta=\omega\alpha\overline{(\omega\alpha+\beta)}.\end{aligned}$$
Hence, by Proposition 3.2 
$$\begin{aligned}   \mu(S_{\alpha,\beta})&=S_{\omega\alpha, \omega\alpha+\beta}=S\left(\dfrac{\omega\alpha}{\omega\alpha+\beta}, \dfrac 1{2|\omega\alpha+\beta|^2}\right)\\&=S\left(\dfrac{\omega\alpha\overline{(\omega\alpha+\beta)}}{|\omega\alpha+\beta|^2}, \dfrac 1{2|\omega\alpha+\beta|^2}\right)\\&=S\left(\dfrac{b+m/\sqrt 3+(c+m/\sqrt 3)(1+\omega)}{a+b+c+m\sqrt 3}, \dfrac 1{2(a+b+c+m\sqrt 3)}\right)\\&=\langle a+m/\sqrt 3, b+m/\sqrt 3, c+m/\sqrt 3\rangle.\end{aligned}$$
\end{proof}

\begin{lemma} If $\mu(S_{\alpha,\beta})=\langle a+m/\sqrt 3, b+m/\sqrt 3, c+m/\sqrt 3\rangle$ and $\mu(S_{\gamma,\delta})=\langle a'+m'/\sqrt 3, b'+m'/\sqrt 3, c'+m'/\sqrt 3\rangle$ then
$$|\alpha\delta-\beta\gamma|^2=ab'+ac'+ba'+bc'+ca'+cb'-2mm'.$$\end{lemma}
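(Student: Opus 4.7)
The plan is to reduce the identity to an application of the bilinear form $Q$ from Theorem 3.4, by way of the Remark that expresses $|\alpha'\delta'-\beta'\gamma'|^2$ in terms of extended barycentric 4-tuples. First, by Proposition 3.2 (with $\Delta=|\omega|^{1/2}=1$ since the matrix $\begin{pmatrix}\omega&0\\\omega&1\end{pmatrix}$ has determinant $\omega$), one has $\mu(S_{\alpha,\beta})=S_{\omega\alpha,\,\omega\alpha+\beta}$ and $\mu(S_{\gamma,\delta})=S_{\omega\gamma,\,\omega\gamma+\delta}$. The one-line identity
\[
(\omega\alpha)(\omega\gamma+\delta)-(\omega\alpha+\beta)(\omega\gamma)=\omega(\alpha\delta-\beta\gamma)
\]
together with $|\omega|=1$ shows that the left-hand side $|\alpha\delta-\beta\gamma|^2$ coincides with the analogous expression for the transformed pair $(\omega\alpha,\,\omega\alpha+\beta)$ and $(\omega\gamma,\,\omega\gamma+\delta)$.

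Next, the Remark following Theorem 3.4 is a polynomial identity valid for any complex $\alpha',\beta',\gamma',\delta'$: it states that $|\alpha'\delta'-\beta'\gamma'|^2=Q(\mathbf u,\mathbf v)$ for the unique 4-tuples $\mathbf u,\mathbf v\in\mathcal U$ whose first three entries are the barycentric coordinates of $S_{\alpha',\beta'}$ and $S_{\gamma',\delta'}$, respectively. Applied here, the hypothesis gives that the first three entries of $\mathbf u$ are $(a+m/\sqrt 3,\,b+m/\sqrt 3,\,c+m/\sqrt 3)$ and those of $\mathbf v$ are $(a'+m'/\sqrt 3,\,b'+m'/\sqrt 3,\,c'+m'/\sqrt 3)$.

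To determine the fourth entries, the $\mathcal U$-constraint $(\mathbf u\cdot\mathbf 1)^2=\mathbf u\cdot\mathbf u$ is linear in $d$ once the $d^2$ terms cancel; invoking the key identity $ab+ac+bc=m^2$ from Lemma 6.4 collapses the remaining $\sqrt 3$-arithmetic and yields $d=-2m/\sqrt 3$, and similarly $d'=-2m'/\sqrt 3$. Then $\mathbf u\cdot\mathbf 1=(a+b+c)+m/\sqrt 3$ (the $3m/\sqrt 3$ from the first three entries minus $2m/\sqrt 3$ from the fourth), and a direct expansion of $Q(\mathbf u,\mathbf v)=(\mathbf u\cdot\mathbf 1)(\mathbf v\cdot\mathbf 1)-\mathbf u\cdot\mathbf v$ shows that the cross terms of the form $(a+b+c)m'/\sqrt 3$ and $(a'+b'+c')m/\sqrt 3$ appearing in both products cancel exactly, as does the $mm'/3$ contribution, leaving
\[
Q(\mathbf u,\mathbf v)=(a+b+c)(a'+b'+c')-(aa'+bb'+cc')-2mm'=ab'+ac'+ba'+bc'+ca'+cb'-2mm',
\]
which is the desired right-hand side.

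The main obstacle is the careful tracking of the $\sqrt 3$-coefficients in the determination of $d$ and in the expansion of $Q$: one needs the sum $3m/\sqrt 3-2m/\sqrt 3=m/\sqrt 3$ and the match between $7mm'/3$ in $\mathbf u\cdot\mathbf v$ and $mm'/3$ in $(\mathbf u\cdot\mathbf 1)(\mathbf v\cdot\mathbf 1)$ to produce the clean $-2mm'$ term. A secondary concern is the applicability of the Remark to arbitrary complex (rather than Eisenstein integer) inputs, but since the Remark's formulas are polynomial in the real coordinates $x,y,u,v,X,Y,U,V$ of $\alpha,\beta,\gamma,\delta$ in the basis $\{1,\omega\}$, the asserted equality is an identity of real polynomials and extends to all complex arguments for free.
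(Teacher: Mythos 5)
Your proof is correct, but it follows a genuinely different route from the paper's. The paper argues directly: it feeds the explicit formulas of Lemma 6.4 ($a=|\beta|^2-s$, $b=|\alpha|^2-s$, $c=s$, $m=t$ with $\overline\alpha\beta=s+it$, and similarly for the primed quantities) into the right-hand side and reduces it in three lines to $|\alpha\delta|^2+|\beta\gamma|^2-2\Re(\overline\alpha\beta\gamma\overline\delta)=|\alpha\delta-\beta\gamma|^2$. You instead exploit the structure built in Section 3: since $\mu$ has matrix determinant $\omega$ of modulus one, the quantity $|\alpha\delta-\beta\gamma|$ is unchanged when you pass to the transformed pairs $(\omega\alpha,\omega\alpha+\beta)$ and $(\omega\gamma,\omega\gamma+\delta)$, and then the Remark after Theorem 3.4 converts $|\alpha\delta-\beta\gamma|^2$ into $Q(\mathbf u,\mathbf v)$ for the barycentric quadruples; your determination of the fourth coordinates $-2m/\sqrt3$, $-2m'/\sqrt3$ from the quadric constraint together with $ab+ac+bc=m^2$ is correct (uniqueness holds because the first three coordinates have positive sum), and your expansion of $Q$ checks out, including the $mm'/3-7mm'/3=-2mm'$ bookkeeping. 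What your approach buys is conceptual economy: the lemma becomes an instance of the invariance of the form $Q$ under the parameter change, with no fresh complex-number computation. What it costs is that the computational content is outsourced to the Remark, which the paper states without proof (``an elementary but tedious calculation''), whereas the paper's own argument for this lemma is short and self-contained; also note that, like the paper, you implicitly identify the $a,b,c,m$ of the hypothesis with the specific values produced by Lemma 6.4, which is the intended reading (in the application they are integers, so the decomposition $p=a+m/\sqrt3$ is unambiguous), but is worth saying explicitly.
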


\begin{proof}
By Lemma 6.4,  the hypothesis implies $a=|\beta|^2-s$, $b=|\alpha|^2-s$, $c=s$, $m=t$ where $\overline\alpha\beta=s+it$
and $a'=|\delta|^2-s'$, $b'=|\gamma|^2-s'$, $c'=s'$, $m'=t'$ where $\overline\gamma\delta=s'+it'$.  
Then $$\begin{aligned} a(b'+c')+&b(a'+c')+c(a'+b')-2mm'\\&=(|\beta|^2-s)|\gamma|^2+(|\alpha|^2-s)|\delta|^2+s(|\gamma|^2+|\delta|^2-2s')-2tt'\\&=|\alpha\delta|^2+|\beta\gamma|^2-2(ss'+tt')\\&=|\alpha\delta|^2+|\beta\gamma|^2-2\Re(\overline\alpha\beta\gamma\overline\delta)=|\alpha\delta-\beta\gamma|^2.\end{aligned}$$
\end{proof}

\noindent{\it Proof of Theorem 6.3.}  Suppose $D\equiv 3\pmod 4$,  $\sigma=(1+i\sqrt{D})/2$.    Given $\alpha,\beta\in{\Bbb C}$,  let $s+it=\overline{\alpha}\beta$,
$A:=|\beta|^2-s+t/\sqrt D$,  $B:=|\alpha|^2-s+t/\sqrt D$,   $C:=s+t/\sqrt D$, $M:=-2t/\sqrt D$, and $\xi:=(\sqrt 3-\sqrt D)/\sqrt{12}$.  By Lemma 6.4,
$$\mu(S_{\alpha,\beta})=\langle A+M\xi, B+M\xi, C+M\xi\rangle.$$
By Lemma 6.5, if $\mu(S_{\alpha,\beta})=\langle A+M\xi, B+M\xi, C+M\xi\rangle$ and $\mu(S_{\gamma,\delta})=\langle A'+M'\xi, B'+M'\xi, C'+M'\xi\rangle$ then 
$$\begin{aligned}|\alpha\delta-\beta\gamma|^2&=AB'+AC'+BA'+BC'+CA'+CB'\\&+M(A'+B'+C')+M'(A+B+C)-\frac{D-3}2 MM'. \end{aligned}\eqno{(12)}$$

Given two distinct spheres in ${\cal B}_{\sigma}$, say $\langle A+M\xi, B+M\xi, C+M\xi\rangle$ and $\langle A'+M'\xi, B'+M'\xi, C'+M'\xi\rangle$, their images under $\mu^{-1}$ are of the form $S_{x,y}, S_{u,v}$ respectively for some complex $x,y,u,v$.   Equation (12) implies $|xv-yu|^2$ is a (positive) integer and therefore the collection of spheres
${\cal B}_{\sigma}$ is normal (i.e., no two spheres have intersecting interiors).   

If $\alpha, \beta\in{\Bbb Z}[\sigma]$, then $|\alpha|^2$ and $|\beta|^2$ are integers.   Also, $\overline\alpha\beta=s+it$ where $2s, 2t/\sqrt D\in{\Bbb Z}$ and $2s\equiv 2t/\sqrt D\pmod 2$.  Then $A,B,C$, and $M$ are integers.  If $\alpha\perp\beta$ then there exist $\gamma,\delta$ such that
$|\alpha\delta-\beta\gamma|=1$  and so, by (12),  
$$\begin{aligned} AB'+AC'&+BA'+BC'+CA'+CB'\\&+M(A'+B'+C')+M'(A+B+C)-\frac{D-3}2 MM'=1\end{aligned}$$
for some integers $A',B',C',M'$ and therefore
$A,B,C,M$ are relatively prime.  Hence $\mu(S_{\alpha,\beta})\in{\cal B}_{\sigma}$.  By the maximality of ${\cal P}_{\sigma}$,  the theorem holds.

Suppose $D\equiv 1,2\pmod 4$,  $\sigma=i\sqrt D$.    Given $\alpha,\beta\in{\Bbb C}$,  let
$A:=|\beta|^2-s$,  $B:=|\alpha|^2-s$,   $C:=s$, $M:=t/\sqrt D$, and $\xi:=\sqrt D/\sqrt{3}$.  By Lemma 6.4, 
$$\mu(S_{\alpha,\beta})=\langle A+M\xi, B+M\xi, C+M\xi\rangle.$$
By Lemma 6.5, if $\mu(S_{\alpha,\beta})=\langle A+M\xi, B+M\xi, C+M\xi\rangle$ and $\mu(S_{\gamma,\delta})=\langle A'+M'\xi, B'+M'\xi, C'+M'\xi\rangle$ then 
$$|\alpha\delta-\beta\gamma|^2=AB'+AC'+BA'+BC'+CA'+CB'-2DMM'. \eqno{(13)}$$

Given two distinct spheres in ${\cal B}_{\sigma}$, say $\langle A+M\xi, B+M\xi, C+M\xi\rangle$ and $\langle A'+M'\xi, B'+M'\xi, C'+M'\xi\rangle$, their images under $\mu^{-1}$ are of the form $S_{x,y}, S_{u,v}$ respectively for some complex $x,y,u,v$.   Equation (13) implies $|xv-yu|^2$ is a (positive) integer and therefore the collection of spheres
${\cal B}_{\sigma}$ is normal (i.e., no two spheres have intersecting interiors).   

If $\alpha, \beta\in{\Bbb Z}[\sigma]$, then $|\alpha|^2$ and $|\beta|^2$ are integers.   Also, $\overline\alpha\beta=s+it$ where $s,t\in{\Bbb Z}$.  Then $A,B,C$, and $M$ are integers.  If $\alpha\perp\beta$ then there exist $\gamma,\delta$ such that
$|\alpha\delta-\beta\gamma|=1$  and so, by (13),  $$AB'+AC'+BA'+BC'+CA'+CB'-2DMM'=1$$ for some integers $A',B',C',M'$ and therefore
$A,B,C,M$ are relatively prime.  Hence $\mu(S_{\alpha,\beta})\in{\cal B}_{\sigma}$.  By the maximality of ${\cal P}_{\sigma}$,  the theorem holds.
$\square$ 
\vskip .1 in

 As in the proof of Theorem 6.3, since $\alpha\perp\beta$ implies $A,B,C,M$ are relatively prime,  we have a test for relative primality in ${\Bbb Z}[\sigma]$. 
  
 \begin{corollary}  $\alpha\perp\beta$ in ${\Bbb Z}[\sigma]$ if and only if 
 $$\gcd(|\alpha|^2, |\beta|^2, s, t/\sqrt D)=1 \text{ if $D\equiv 1,2\pmod 4$}$$
 $$\gcd(|\alpha|^2, |\beta|^2, s-t\sqrt D, 2t/\sqrt D)=1 \text{ if $D\equiv 3\pmod 4$}$$
 where $s+it=\overline\alpha\beta$.\end{corollary}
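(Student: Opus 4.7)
The strategy is to reduce the claim to a statement about the barycentric coordinates $(A,B,C,M)$ attached to $(\alpha,\beta)$ via Lemma 6.4 (the data for which $\mu(S_{\alpha,\beta})=\langle A+M\xi,B+M\xi,C+M\xi\rangle$). When $\alpha,\beta\in{\Bbb Z}[\sigma]$, the proof of Theorem 6.3 shows that $A,B,C,M$ are integers. Elementary gcd manipulations (adding or subtracting integer multiples of one entry from another does not change the gcd) identify the expression displayed in the statement with $\gcd(A,B,C,M)$. In the case $D\equiv 1,2\pmod 4$ this is immediate from $A=|\beta|^2-s$, $B=|\alpha|^2-s$, $C=s$, $M=t/\sqrt D$. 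In the case $D\equiv 3\pmod 4$ one introduces $\tau:=s-t/\sqrt D\in{\Bbb Z}$, notes that $A=|\beta|^2-\tau$, $B=|\alpha|^2-\tau$, and $C=\tau+2t/\sqrt D=\tau-M$, and reduces to $\gcd(|\alpha|^2,|\beta|^2,\tau,2t/\sqrt D)$; since $D$ is odd one has $s-t\sqrt D=\tau-\tfrac{D-1}{2}(2t/\sqrt D)$, so the result coincides with $\gcd(|\alpha|^2,|\beta|^2,s-t\sqrt D,2t/\sqrt D)$.

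It therefore suffices to prove that $\alpha\perp\beta$ if and only if $\gcd(A,B,C,M)=1$. The forward implication is already contained in the proof of Theorem 6.3: if $\alpha\perp\beta$ then $\mu(S_{\alpha,\beta})\in{\cal B}_{\sigma}$, and by the very definition of ${\cal B}_{\sigma}$ this requires $(A,B,C,M)$ to be relatively prime.

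For the converse, use that ${\Bbb Z}[\sigma]$ is a UFD. Write $\lambda:=\gcd(\alpha,\beta)$ and factor $\alpha=\lambda\alpha'$, $\beta=\lambda\beta'$ with $\alpha'\perp\beta'$. The decisive observation is that the tuple $(A,B,C,M)$ scales homogeneously with $|\lambda|^2$: indeed $|\alpha|^2=|\lambda|^2|\alpha'|^2$, $|\beta|^2=|\lambda|^2|\beta'|^2$, and $\overline\alpha\beta=|\lambda|^2\overline{\alpha'}\beta'$, so $s=|\lambda|^2s'$ and $t=|\lambda|^2t'$. Substituting into the formulas of Lemma 6.4 yields $(A,B,C,M)=|\lambda|^2(A',B',C',M')$. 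By the forward direction applied to $(\alpha',\beta')$, $\gcd(A',B',C',M')=1$, hence $\gcd(A,B,C,M)=|\lambda|^2$. This gcd equals $1$ precisely when $|\lambda|^2=1$, i.e., when $\lambda$ is a unit, i.e., when $\alpha\perp\beta$.

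The only real obstacle is bookkeeping: the two cases $D\equiv 1,2\pmod 4$ and $D\equiv 3\pmod 4$ must be tracked separately, and one must invoke the correct integrality statements from the proof of Theorem 6.3 ($s,t/\sqrt D\in{\Bbb Z}$ in the first case; $s-t/\sqrt D,\;s+t/\sqrt D,\;2t/\sqrt D\in{\Bbb Z}$ in the second) to ensure that the gcds are taken in $\mathbb Z$.
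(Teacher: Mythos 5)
Your proof is correct, and it is in fact more complete than what the paper itself records. The paper dispatches this corollary in a single remark: it observes only that, as in the proof of Theorem 6.3, $\alpha\perp\beta$ forces $A,B,C,M$ to be relatively prime (via identity (12) or (13) with right-hand side $1$), leaving both the converse implication and the translation between $\gcd(A,B,C,M)$ and the displayed gcds implicit. You supply both. The gcd bookkeeping is right in each case, including the key identity $s-t\sqrt D=\tau-\frac{D-1}{2}\cdot\frac{2t}{\sqrt D}$ with $\tau:=s-t/\sqrt D$, which legitimizes replacing $\tau$ by $s-t\sqrt D$ modulo the entry $2t/\sqrt D$. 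The converse via $\lambda:=\gcd(\alpha,\beta)$ in the UFD ${\Bbb Z}[\sigma]$, using $\overline\alpha\beta=|\lambda|^2\overline{\alpha'}\beta'$ to get the homogeneity $(A,B,C,M)=|\lambda|^2(A',B',C',M')$, is exactly the missing mechanism, and it buys a sharper statement than the corollary asks for, namely $\gcd(A,B,C,M)=|\gcd(\alpha,\beta)|^2$. One small point of precision: justify the forward direction by the displayed identities (12)/(13) with value $1$ (an integer linear combination of $A,B,C,M$ with integer coefficients), which is what the proof of Theorem 6.3 actually establishes, rather than by ``membership in ${\cal B}_\sigma$''; the definition of ${\cal B}_\sigma$ only requires that the sphere admit \emph{some} representation by a relatively prime triple solving (11), and in the case $D=3$ one even has $\xi=0$, so the quadruple $(A,B,C,M)$ is not determined by the sphere alone. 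With that rephrasing your argument is complete.
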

 

We now find an algorithm for finding the relatively prime integer solutions of (10) (and thus the elements of ${\cal B}_{\sigma}$ and ${\cal P}_{\sigma}$).    For any quadratic polynomial $f(x)$, the ``secant addition" 
$$x\oplus y:=\dfrac{xf(y)-yf(x)}{f(y)-f(x)}$$
is associative and, in fact, if $f$ has roots $u$ and $v$, then $(({\Bbb R}-\{u,v\})\cup\{\infty\}, \oplus, \infty)$ is an abelian group  (see \cite{NorSecant}).
The ring of algebraic integers ${\Bbb Z}[\sigma]$ has characteristic polynomial
 $$f(x):=\begin{cases}
x^2-x+\frac{D+1}4 &\text{ if $D\equiv 3 \pmod 4$}\\
x^2+D &\text{ otherwise.}
\end{cases}$$
The corresponding secant addition is defined by
 $$x\oplus y:=\begin{cases}
 \dfrac{xy-(D+1)/4}{x+y-1} &\text{ if $D\equiv 3 \pmod 4$}\\
\dfrac{xy-D}{x+y} &\text{ otherwise.}
\end{cases}$$
It follows that $x\oplus y\oplus z=\infty$ if and only if $(-x,-y,-z)$ is a solution of (11).   
Hence, to find (all) $(a,b,c)\in{\Bbb Z}_{\perp}^3$ that solves (11):
\begin{itemize}
\item  Choose $x,y\in{\Bbb Q}$,
\item  Calculate $z:=(x\oplus y)^{-1}$,
\item  Let $m$ be the least positive integer so that $m(xy+xz+yz)\in{\Bbb Z}$,
\item  Let $a=-mx,b=-my, c=-mz.$
\end{itemize}

Just as there is a recursive geometric construction of $B_{\sigma}$ based on a tetrahedron when $D=3$ and based on an octahedron when $D=1$, we believe that for all five cases $D=1,2,3,7, 11$ corresponding to ${\Bbb Z}[\sigma]$ being a Euclidean domain, there is a recursive construction based on a polyhedron.     Figure 9 shows the conjectured polyhedra with respective discriminants 3,4,7,8,11 (i.e., $D=3,1,7,2,11$ respectively).    Computer experimentation supports the conjectured relation, as do some results in the literature (see  \cite{Y}).   

\begin{figure}[htbp] 
   \centering{
   \includegraphics[ width=.8 in]{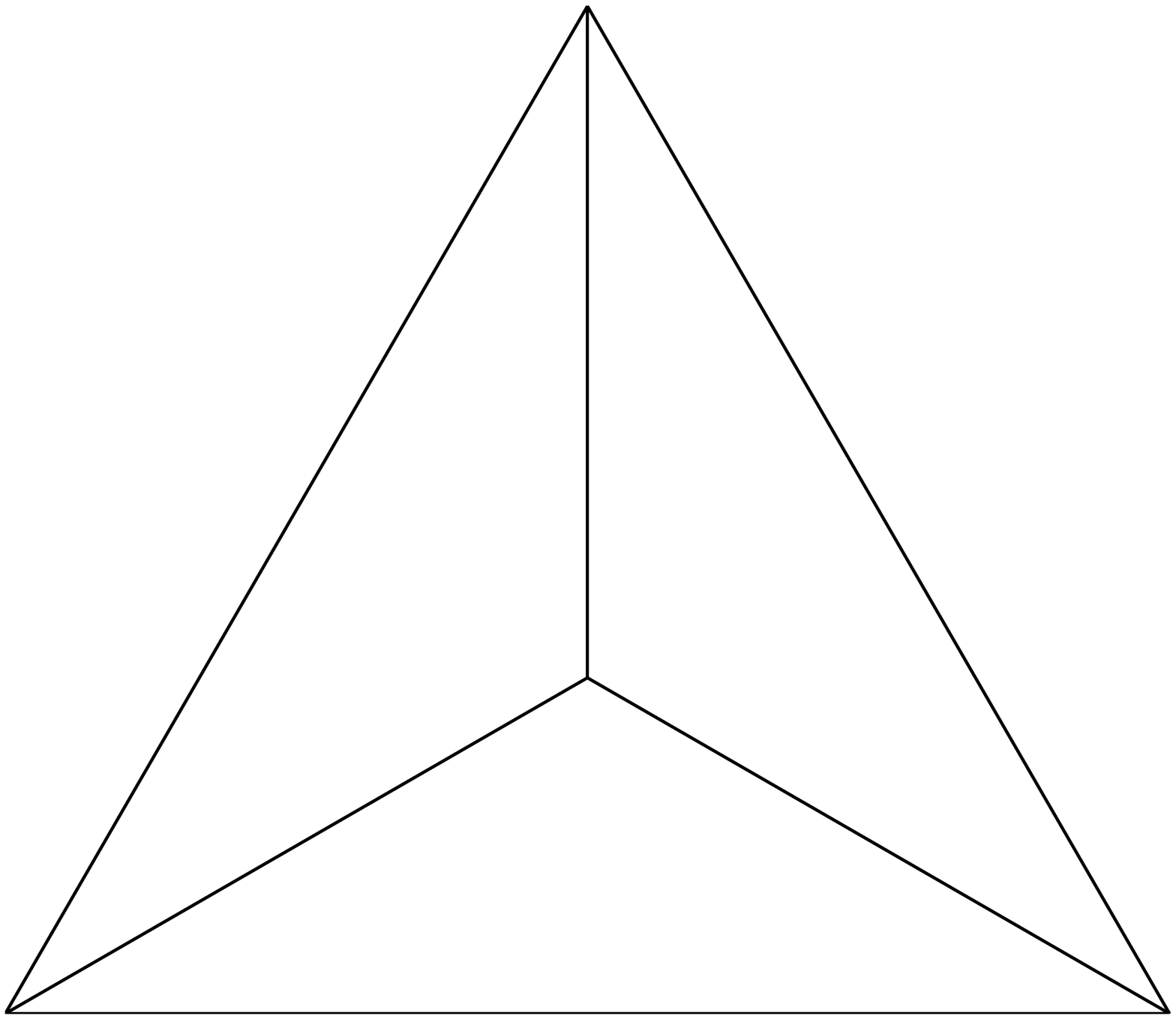} 
   \includegraphics[ width=.8 in]{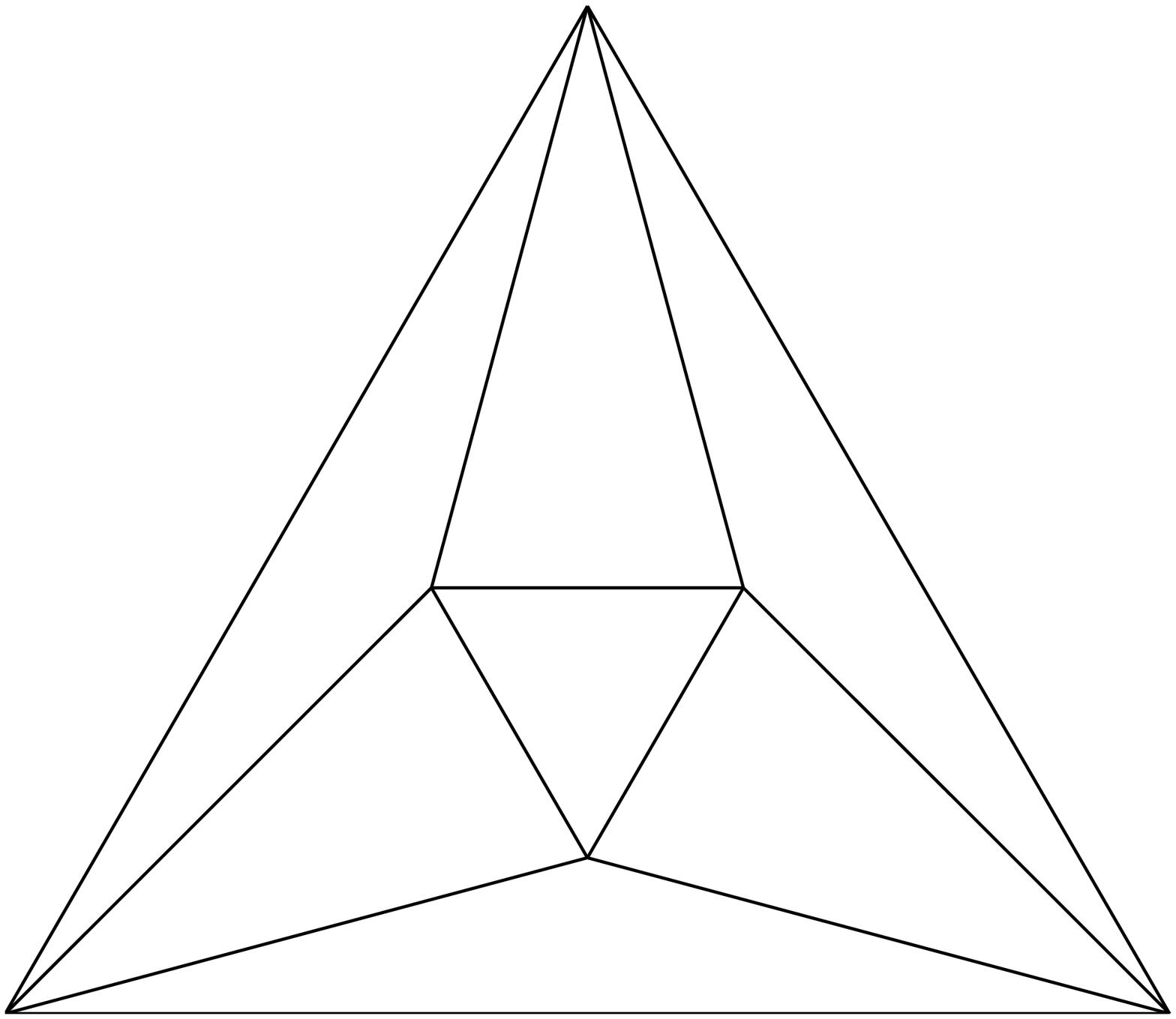} 
   \includegraphics[ width=.8 in]{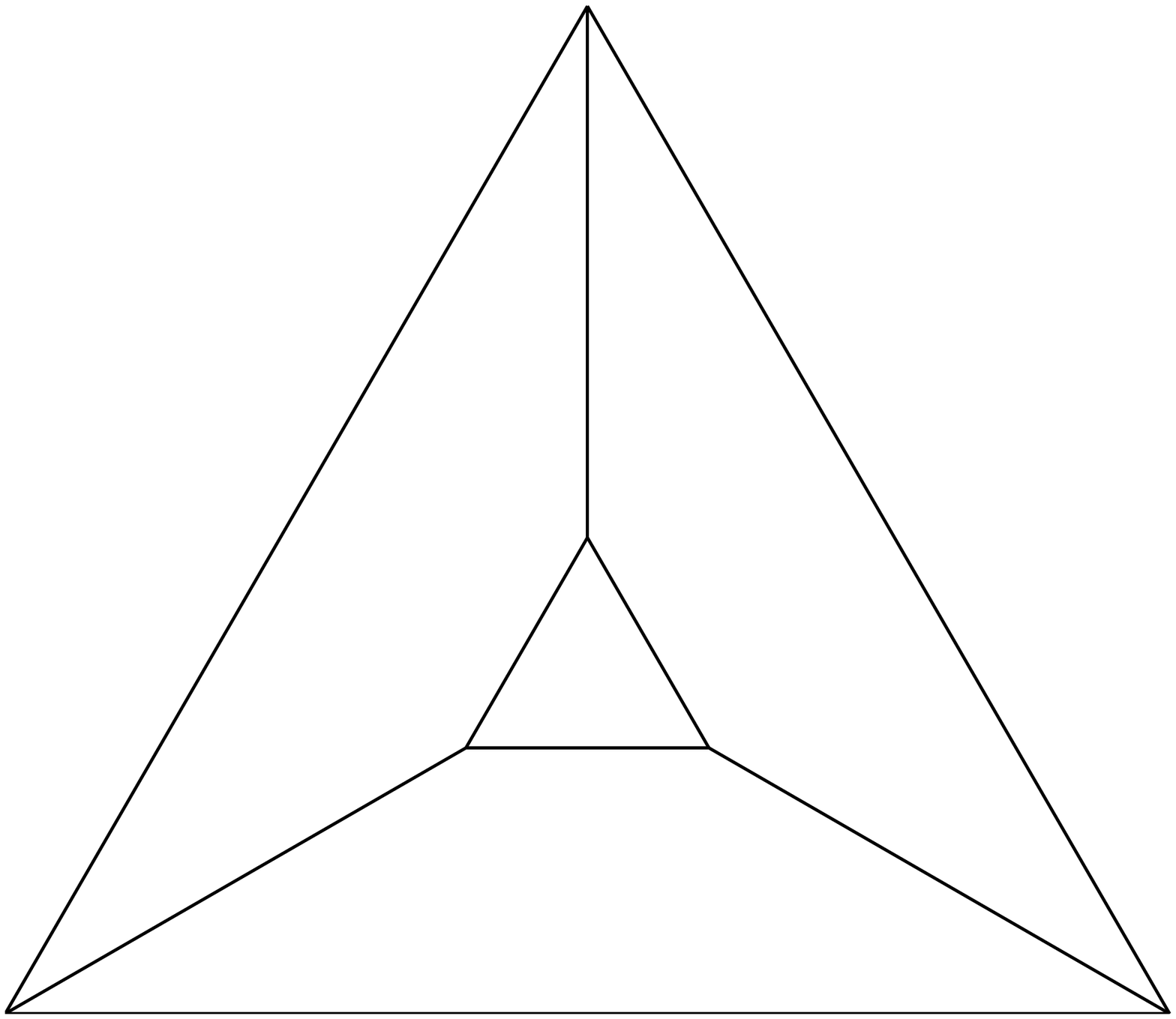}
    \includegraphics[ width=.8 in]{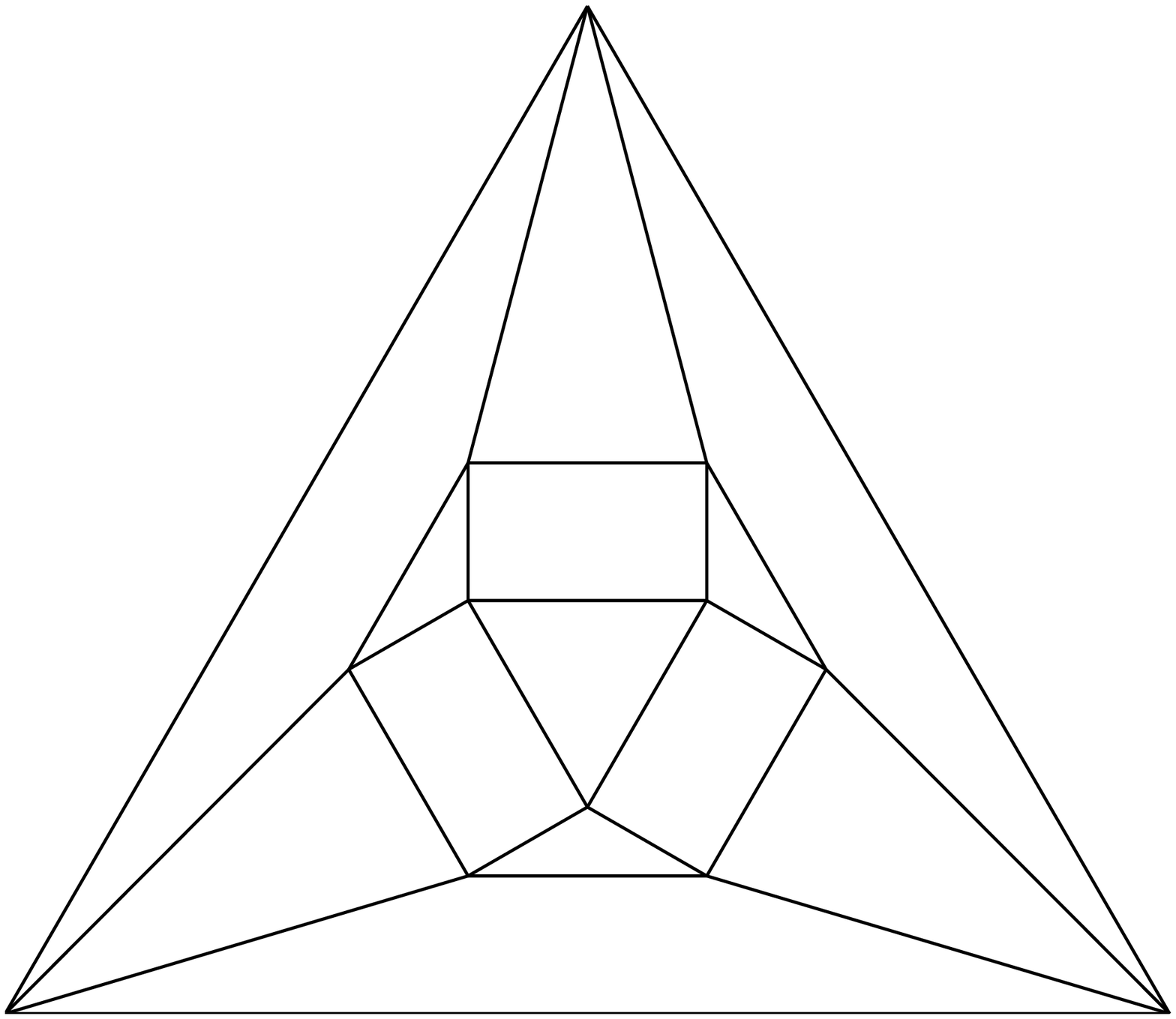} 
    \includegraphics[ width=.8 in]{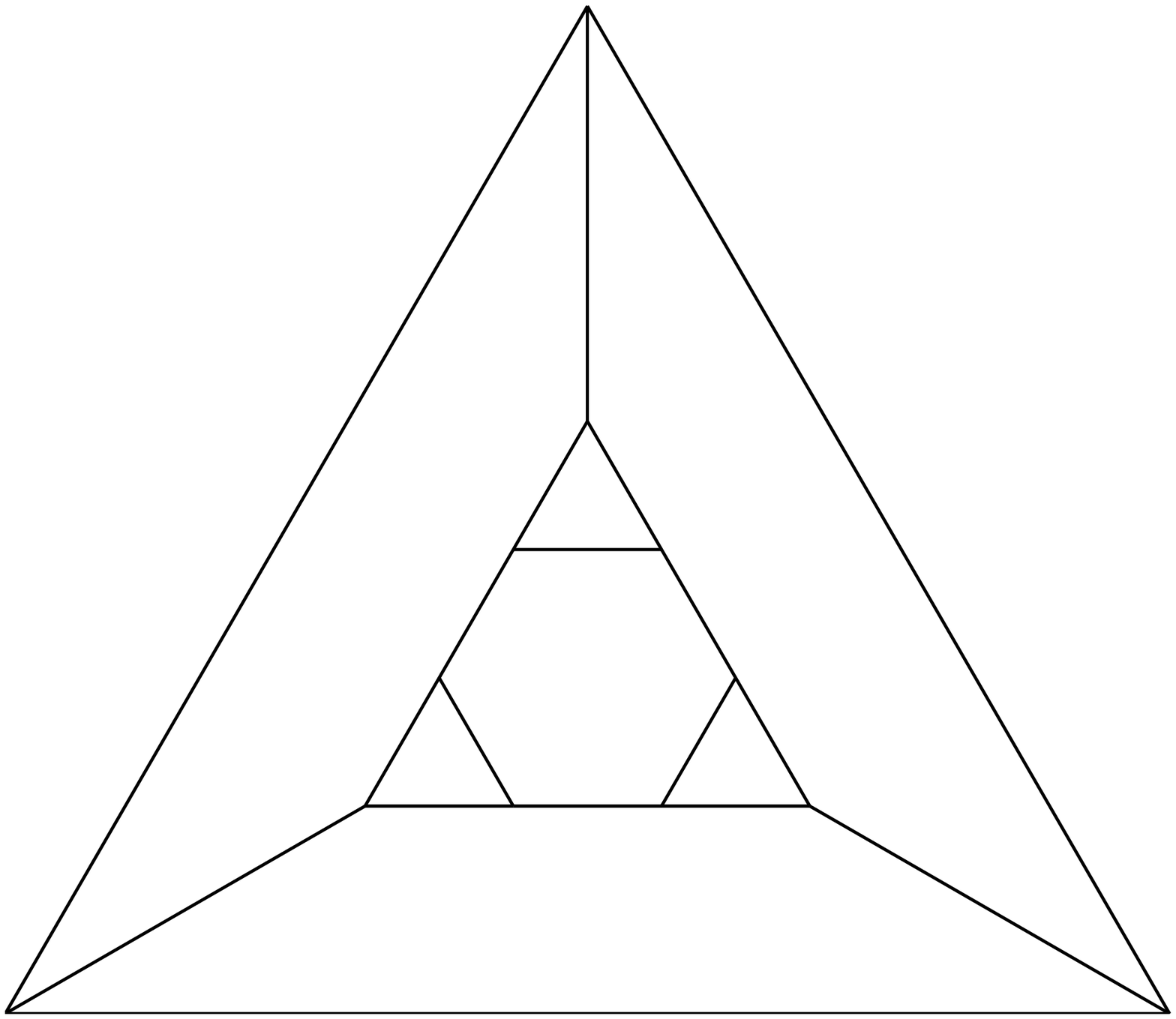} 
    }
   \caption{Polyhedra corresponding to the five imaginary quadratic Euclidean domains.}
   \label{fig. 7}
\end{figure}

\section{Conclusion}

In this section we attempt to connect the material of this paper to past, present, and future research.   

The main direction of this paper was to understand and generalize Ford circles by, first, developing three different parameterizations of them (${\cal P,G,B}$);  the third being apparently new.  There are therefore three different directions Ford circles to be generalized.   We first consider the geometric, followed by the algebraic, and then ``barycentric".

The initial motivation for this paper was to extend continued fractions in a new way.   Since
continued fractions are closely related to the geometry of Ford circles (a topic not developed in this paper but which appears, implicitly at least, in \cite{Se}), any geometric generalization of Ford circles could lead to a generalization of continued fractions.    A natural extension of the interval is to the Sierpinski gasket  (or any of its homeomorphic images)  see Figure 7, and a generalization of continued fractions to that space might be new.   Assigning spheres to the local cut points of {\bf CP} seemed to be promising and indeed this is the type of sphere developed in Section 5.

\begin{problem}  Develop a theory of continued fractions on {\bf CP}.  \end{problem}

Three-dimensional analogues of Ford circles have been studied previously by Hellegouarch \cite{H1, H2},  Pickover \cite{Pick}, and Rieger \cite{R1,R2} but a unifying treatment of them has not been done.   This paper, although not explicitly a survey of the topic, was meant to cover the topic more comprehensively.      In particular, we cover the cases corresponding to ${\Bbb Q}(i)$, ${\Bbb Q}(\omega)$ and, to a lesser extent, ${\Bbb Q}(\sigma)$ for which ${\Bbb Z}[\sigma]$ is a UFD.
\begin{problem} Develop the connection between continued fractions and the the family ${\cal P}_{\sigma}$ of spheres. \end{problem}

Diophantine approximation is a major theme of continued fractions and can be understood geometrically in terms of Ford circles;  see \cite{C}, p. 30.  Schmidt \cite{Sch1, Sch2, Sch3, Sch4} developed the theory of Diophantine approximation for four of the imaginary quadratic number fields for which the ring of integers is a Euclidean domain.    For a rational number $q:=a/b$ ($a,b$ in lowest terms), let $C(q):=C_{a,b}$.  
The ``parent algorithms" of Sections 2, 4, and 5 leads to an algorithm for ``climbing up" the circles or spheres.    This process, for the circles at least, is dual to the Euclidean algorithm:  if $[0; a_1,a_2,...,a_n]$ represents a rational number $q:=a/b$, then the Gauss map of it (which is tantamount to running the Euclidean algorithm one step) satisfies $\{1/q\}=[0; a_2,..., a_n]$  whereas the parents of $C(q)$ are $C(q')$ and $C(q'')$ where $q'=[0; a_1,..., a_{n-1}]$ and $q''=[0; a_1,...,a_n-1]$.  Hence, the Euclidean property for ${\Bbb Z}$ is equivalent to the property that every Ford circle of radius less than $1/2$ touches a larger one.    This is also a consequence of having a recursive geometric procedure (that adds smaller circles) which generates all of the Ford circles.  

It is reasonable then to consider the tetrahedral and octahedral geometric recursions of Sections 4 and 5 as ways to ``see" that ${\Bbb Z}[i]$ and ${\Bbb Z}[\omega]$ are Euclidean domains.  
\begin{problem}  It is well known that there are exactly five imaginary quadratic fields whose rings of integers are Euclidean. Are there then only five polyhedra that define a recursive geometric procedure that gives all the Ford spheres parameterized by these fields?   \end{problem}

A known theorem states that the integers of an imaginary quadratic field form a UFD if and only if it satisfies a multistage Euclidean algorithm (Proposition 3.2 of \cite{L}).  A celebrated theorem of Heegner, Stark, and Baker (all independent) showed that there are precisely 19 such fields.   

\begin{problem}  What polyhedra form contact graphs for spheres and whose iterates form all the Ford spheres for the 19 UFD's?  \end{problem} 

\noindent Work in this direction seems to have been done by Yasaki \cite{Y}.

Given any circle packing in ${\Bbb C}$, normal spheres can be attached to each point of tangency of these circles (assign curvature to the sphere equal to the sum of curvatures of the two circles;  see \cite{ComplexDCT}) so as to form an array of non-overlapping normal spheres.    By the Koebe-Andreev-Thurston theorem, every planar graph can be realized as the contact graph of some circle packing and thus give rise to the contact graph of normal spheres.   
\begin{problem}  Every finite contact graph of spheres generates a recursive geometric process leading to an infinite family of spheres whose points of tangency with ${\Bbb C}$ form a set with some type of Euclidean property;  can this idea be developed and/or can such a set of be algebraically defined? \end{problem}

Although we expanded ${\cal P}$ to ${\cal P_{\sigma}}$ in Sections 4,5, and 6,  for imaginary quadratic fields with class number 1,  it seems possible to extend further.
\begin{problem}  Investigate ${\cal P_{\sigma}}$ for $\sigma$, say, a cubic algebraic number.  \end{problem}

\begin{problem}  Solutions of equation (11) parameterized Ford circles ${\cal P}_{\sigma}$;   do more general equations parameterize other interesting families of spheres? \end{problem}

\begin{problem} A conjecture by Gauss states that there are infinitely many real quadratic fields of class number 1 (i.e., ring of integers is a UFD).   Are there definitions of Ford circles that are relevant to this conjecture?    \end{problem}

 For example,  ${\Bbb Q}(\sqrt 2)$ is a UFD.   Given $a,b,c,d\in {\Bbb Z}[\sqrt 2]$, consider the recursive geometric procedure for normal circles: $$\{C_{a,b},C_{c,d}\}\mapsto \{C_{a,b}, C_{a\sqrt 2+c, b\sqrt2+d},C_{a+c\sqrt 2, b+d\sqrt 2}, C_{c,d}\}$$
The points where these circles intersect ${\Bbb R}$ take on values that form a strict subset of ${\Bbb Z}[\sqrt 2]$.   These circles form a subset of a circle packing investigated by Guettler and Mallows \cite{GM}.

 At the end of Section 6, it was seen Ford spheres parameterized by ${\Bbb Q}(\sigma)$ can also be parameterized by a group $\{(x,y,z)\in{\Bbb{Q}}:  x\oplus y\oplus z=e\}$ where $\oplus$ is the ``secant addition" \cite{NorSecant} associated with 
the minimum polynomial for $\sigma$.  
\begin{problem}  A group of rationals (possibly with $e=\infty$) can be based on secant addition for any quadratic polynomial.   What families of spheres are generated in this way?   Is there a way to understand the group structure geometrically (i.e., in terms of the spheres)?  Considering cubic polynomials in this context,  $\oplus$ is still defined though not associative.   Is there a way to define spheres in this case? \end{problem}

\bigskip

\noindent\textit{Department of Mathematics, 
SUNY, Plattsburgh, NY 12901\\
northssw@plattsburgh.edu}

\end{document}